\newcommand{\hbx}{\hfill$\Box$}
\newcommand{\eps}{\varepsilon}
\newcommand{\R}{\mathbb{R}}
\newcommand{\N}{\mathbb{N}}
\newcommand{\W}{\mathcal{W}}
\newcommand{\I}{{\mathcal I}}
\renewcommand{\)}{\right)}
\newcommand{\der}[2]{ \frac{\text{d} #1}{\text{d} #2} }  
\newenvironment{remark} {\par {\noindent \it \sc Remark.} \small \it } {}
\title{Wild oscillations in a nonlinear neuron model with resets: (II)  Mixed-mode oscillations}
\author{Jonathan E. Rubin\footnotemark[1] \and Justyna Signerska-Rynkowska\footnotemark[6] \footnotemark[3] \footnotemark[4] \and Jonathan D. Touboul\footnotemark[6] \footnotemark[4]
\and Alexandre Vidal\footnotemark[5] \footnotemark[4] }
\renewcommand{\thefootnote}{\fnsymbol{footnote}}
\begin{document}
\maketitle

\renewcommand{\thefootnote}{\arabic{footnote}}
\begin{abstract}
This work continues the analysis of complex dynamics in a class of bidimensional nonlinear hybrid dynamical systems with resets modeling neuronal voltage dynamics with adaptation and spike emission. We show that these models can generically display a form of mixed-mode oscillations (MMOs), which are trajectories featuring an alternation of small oscillations with spikes or bursts (multiple consecutive spikes). The mechanism by which these are generated relies fundamentally on the hybrid structure of the flow: invariant manifolds of the continuous dynamics govern small oscillations, while discrete resets govern the emission of spikes or bursts, contrasting with classical MMO mechanisms in ordinary differential equations involving more than three dimensions and generally relying on a timescale separation. The decomposition of mechanisms reveals the geometrical origin of MMOs, allowing a relatively simple classification of points on the reset manifold associated to specific numbers of small oscillations. We show that the MMO pattern can be described through the study of orbits of a discrete \emph{adaptation map}, which is singular as it features discrete discontinuities with unbounded left- and right-derivatives. We study orbits of the map via rotation theory for discontinuous circle maps and elucidate in detail complex behaviors arising in the case where MMOs display at most one small oscillation between each consecutive pair of spikes.
\end{abstract}
\begin{keywords}
hybrid dynamical systems, rotation theory, mixed-mode oscillations, bursting,  nonlinear integrate-and-fire neuron model.
\end{keywords}

\begin{AMS}
34K34, 
37E45, 
37E05, 
37E10, 
37N25, 
92C20 
\end{AMS}

\noindent {\bf Running title.}  MMOs in a nonlinear neuron model

\newpage

\section{Introduction}

In this paper, we continue our study of hybrid integrate-and-fire neuronal models from \cite{paper1}, turning our attention to the analysis of mixed-mode oscillations (MMOs).  
MMOs are trajectories exhibiting small or subthreshold oscillations alternating with one or more large amplitude oscillations or spikes.  
These appear in a variety of cell types and brain areas including inferior olive nucleus neurons~\cite{bernardo-foster:86,llinas-yarom:81,llinas-yarom:86}, stellate cells
of the entorhinal cortex \cite{alonso-klink:93,alonso-llinas:89,jones:94,yoshida:07}, and neurons in the dorsal root ganglia \cite{amir-michaelis-etal:99,liu-michaelis-etal:00,llinas:88}, as well as in thalamocortical spindle waves~\cite{luthi1998periodicity}.  
Neurons transmit information  through the timing of spikes and the pattern of spikes fired, and  
subthreshold oscillations and associated MMO patterns may contribute to the precision, timing, and robustness of neuronal spiking \cite{lampl:93,yoshida:07,rotstein:encyc} as well as to spatial navigation \cite{giocomo:07}.
The major goal of this work is to provide a detailed mathematical analysis of MMOs in a class of neuronal models.  Specifically, in this article, we (i) consider a class of planar hybrid models widely used to model the electrical activity of neurons, (ii) show that models in this class are able to generate a wide range of MMO patterns, (iii) introduce a general mathematical framework for studying the dynamical structure involved and the orbits that result, and (iv) describe the geometric mechanism underlying these patterns. 

From the biological viewpoint, neuronal activity patterns, including MMOs, rely on ionic and biochemical mechanisms that are accurately described by nonlinear dynamical systems of relatively high complexity, such as variants on the celebrated Hodgkin-Huxley model~\cite{hodgkin-huxley:52,rubin2007giant}.
As described in our companion paper \cite{paper1}, in contrast with detailed biophysical models, integrate-and-fire models are abstractions of the voltage dynamics in which differential equations describing the dynamics of membrane depolarization are combined with a discrete reset corresponding to the emission of an action potential (a spike) and subsequent hyperpolarization. These models, first introduced more than a century ago~\cite{lapicque:07}, have evolved to incorporate nonlinearities to model the fast dynamics of spike initiation~\cite{brunel-latham:03,fourcaud-trocme-hansel-etal:03} and additional variables modeling adaptation~\cite{izhikevich:03}, synaptic dynamics~\cite{mihalas2009generalized} or resonant properties~\cite{izhikevich2001resonate}. Among these models, nonlinear bidimensional integrate-and-fire models with blow-up and resets are widely used in computational neuroscience, owing to their relative simplicity yet very rich dynamical phenomenology~\cite{brette-gerstner:05,izhikevich:03,izhikevich:07,touboul:08,touboul-brette:09}. However, none of these studies reported the presence of MMOs in these systems. 

More generally and despite their importance for applications, MMOs have so far received little attention in hybrid systems. A notable exception is the work of Rotstein and collaborators on linear bidimensional \emph{resonate-and-fire} neuron models~\cite{izhikevich2001resonate}. These models are organized around an unstable focus and naturally exhibit a variety of MMOs. Numerical simulations guided by characterizations of the trajectories and timescale analysis were used to explore associated subthreshold dynamics \cite{rotstein:06,rotstein:08} and to show how they can give an abrupt increase in firing frequency \cite{rotstein:13}. MMOs in this class of models result from a combination of subthreshold oscillations together with subsequent threshold crossings corresponding to spikes. The multi-timescale MMO scenario in these models does not necessarily represent the general mechanism for MMOs in hybrid models, however, and to date, there has not been a thorough analytical investigation of the detailed mechanisms underlying MMOs, incorporating both subthreshold and spiking components, in these models in the absence of timescale separation. 

In the current manuscript we present a rigorous study of MMOs in nonlinear bidimensional integrate-and-fire neuron models. We show that these can exhibit a wide variety of MMO patterns when the subthreshold dynamics features two unstable fixed points, a saddle and an unstable focus. We investigate spike patterns through iterates of a discrete map, the so-called \emph{adaptation map} introduced in~\cite{touboul-brette:09} (see the companion paper~\cite{paper1} for more details on the construction and use of this map). While previous works have considered settings in which the adaptation map is continuous, here, in the presence of an unstable focus, we will show that the adaptation map is singular: it may be undefined on a countable set of values at which the map has well-defined and finite left and right limits and infinite one-sided derivatives. A number of difficulties emerge from the irregular nature of the map; since associated circle maps may also feature analogous singularities, classical theories of Poincar\'{e} and Denjoy of circle homeomorphisms or their extensions to continuous non-invertible maps (\cite{misiu}) do not apply, and because of the unbounded derivative, neither do theories of discontinuous contractive maps~\cite{alseda,keener}. This contrasts with previous detailed studies of interspike intervals for periodically driven one-dimensional integrate-and-fire models \cite{coombes,gedeon,keener2,wmjs,js,tiesinga}. Here, we will demonstrate a fundamental relationship between the type of MMO pattern arising and the rotation number of the adaptation map. With the aim of characterizing rotation numbers of these maps, we build upon a number of theoretical results on circle maps that may have discontinuities~\cite{brette,alseda,keener,misiu,frrhodes,frrhodes2} and sometimes extend these to singular maps with unbounded derivative. In this way, we describe a new mechanism underlying robust MMOs, not requiring multiple timescales, in hybrid dynamical systems constituting an important class of neuron models.

Our presentation of these results is organized as follows. In section \ref{sec:Framework}, we introduce the model studied, review a few results on its dynamics, and describe the geometric mechanisms underlying the generation of MMOs.  We detail the properties of the adaptation map in section~\ref{sec:AdaptationMap}, with a particular focus on discontinuity points and divergence of the derivative, which is proved to be a general result based on a Poincar\'e section encompassing the stable manifold of a saddle. We further show that the particular structure of the map ensures that any type of transient MMO can be generated by these neuron models. In section~\ref{sec:OneDisc}, we use discontinuous rotation theory to develop a precise description of the dynamics in the case where the adaptation map admits one discontinuity in its invariant interval. Implications and perspectives in dynamical systems and neuroscience, as well as some extensions and prospects for analyzes of cases with more discontinuities, are discussed in sections~\ref{sec:MoreDisc} and~\ref{sec:Discussion}.

\section{Hybrid neuron model and the geometry of the MMO mechanism}\label{sec:Framework}
In this work, we study the class of integrate-and-fire neuron models introduced in~\cite{touboul:08}, described in detail in the companion paper \cite{paper1}: 
\begin{equation}\label{eq:SubthreshDyn}
  \begin{cases}
    \der{v}{t} = F(v) - w + I \\
    \der{w}{t} = \varepsilon (bv - w),
  \end{cases}
\end{equation}
where  $\eps,b>0$ and $I$ are real parameters.   Following \cite{touboul:08, touboul-brette:09}, we will assume that the real function $F$ is regular (at least three times continuously differentiable), strictly convex, superquadratic at infinity, with its derivative having a negative limit at $-\infty$ and an infinite limit at $+\infty$ (see Assumption (A1) in the companion paper \cite{paper1}). These assumptions imply in particular that 
the membrane potential blows up in finite time and at this explosion time, say $t^*$, the adaptation variable converges to a finite value $w(t^{*-})$ \cite{touboul:09}.  At time $t^*$, it is considered that the neuron has fired a spike; the voltage is instantaneously reset to the fixed reset value $v_R$ and the adaptation variable is updated as follows:
\begin{equation}\label{eq:new_reset}
	\begin{cases}
		v(t^*)=v_R\\
		w(t^*)=\gamma w(t^{*-})+ d
	\end{cases}
\end{equation}
with $\gamma \leq 1$ and $d\geq 0$; in this work, we allow $\gamma < 1$, which arises in accounting for spike duration (see \cite{paper1} for details).

Numerical simulations performed in the present article correspond to the case of the quartic model $F(v)=v^4+ a v$ with, unless otherwise stated, 
\begin{equation} \label{Par_Val}
a=0.2, \quad \varepsilon=0.1, \quad b=1, \quad I=0.1175, \quad v_R=0.1158.
\end{equation}
The values of the parameters associated with resets, $d$ and $\gamma$, are left free and will be used as  bifurcation parameters. 

As discussed in the companion paper \cite{paper1},  the 1-dimensional \emph{adaptation map} $\Phi$ can be defined based on  the orbits of the system in the phase plane. Specifically, if $(V(\cdot,v_R,w),W(\cdot,v_R,w))$ is the solution of equation~\eqref{eq:SubthreshDyn} with initial condition $(v_{R},w)$ and if $V$ blows up at $t^{*}$ (i.e., $\lim_{t\to t^{*-}}V(t,v_R,w)=\infty$), then 
\begin{equation}
\label{eq:admap}
\Phi(w):=W(t^{*},v_R,w)=\gamma W(t^{*-},v_R,w)+d
\end{equation}
 is the associated value of the adaptation variable after spike and reset. 
In \cite{paper1,touboul-brette:09}, we detailed the mathematical analysis of \eqref{eq:SubthreshDyn}-\eqref{eq:new_reset} in the absence of fixed points (yellow region of Fig.~1.1 of \cite{paper1}). In that regime, the system fires an action potential for any initial condition in the phase plane. 
The adaptation map is thus well-defined on the whole real line and smooth. The orbits of the map can be used to analyze spike patterns and transitions between them.

In the present manuscript, we analyze the  dynamics in regions in which the system features an unstable spiral point and a saddle (pink region of Fig. 1.1 of \cite{paper1}).  The stable manifold of the saddle is a 
one-dimensional heteroclinic orbit spiraling out from the unstable focus (see Fig.~1.1 of \cite{paper1} and Fig.~\ref{fig:Structure}). This geometry of the phase space constrains trajectories reset within the spiral to proceed to a prescribed number of rotations around the unstable fixed point before firing (Fig.~\ref{fig:Structure}, inset and bottom). The rotations around the unstable fixed point provide small oscillations used to define mixed-mode oscillations as follows.

\begin{definition} Mixed-mode oscillations (MMOs) for the system \eqref{eq:SubthreshDyn}-\eqref{eq:new_reset} are spiking orbits consisting of an alternation of small oscillations and spikes. MMO patterns formed by a sequence of $\mathcal{L}_{k} \in \N^{*}:=\{1,2,...\}$ spikes followed by $s_{k}\in \N^{*}$ small oscillations are characterized by their \emph{signature} $\mathcal{L}_1^{s_1} \mathcal{L}_2^{s_2} \mathcal{L}_3^{s_3} \cdots$. Periodic signatures with period $k$ are only denoted by finite sequence of length $k$, $\mathcal{L}_1^{s_1} \mathcal{L}_2^{s_2} ... \mathcal{L}_k^{s_k}$.
\end{definition}

\begin{remark} 
\begin{itemize}
\item MMOs featuring bursts of two or more consecutive spikes not separated by periods of small oscillations (i.e., $\mathcal{L}_{k}\geq 2$ for some $k$) are referred to as mixed-mode bursting oscillations (MMBOs). We use the term MMO as a generic term to describe any combination of spikes and small oscillations, and the term MMBO is applied specifically to distinguish those trajectories featuring bursts and small oscillations. 
\item In the present paper, we will be able to distinguish small oscillations at half-rotation precision, and thus will extend the definition above to signatures with half-integer number of small oscillations $s_{k}\in \frac 1 2 \N^{*}$. 
\end{itemize}
\end{remark}

We henceforth assume that the reset line $\{v=v_R\}$ intersects the spiraling stable manifold of the saddle, as in Fig.~\ref{fig:Structure}. The adaptation map is undefined at each intersection of the reset line with the stable manifold of the saddle, since the orbit of \eqref{eq:SubthreshDyn} starting from such a point converges to the saddle, and thus no spike follows.  For any initial condition $(v_R,w)$  not on the stable manifold, the associated orbit performs a specific number of small oscillations before firing, resulting in an MMO pattern. As indicated in Fig.~\ref{fig:Structure}, the present framework allows us to perform a detailed analysis of this scenario, since:
\begin{itemize}
	\item the fact that the stable manifold is bounded in the $v$ variable implies that the amplitudes of small oscillations, similarly to biological MMOs, are  considerably smaller than the spike amplitude, and 
	\item the intersections of the stable manifold with the reset line partition the values of $w$ associated with a specific number of small oscillations (with half-rotation precision). 
\end{itemize}
The signature of the MMO patterns can be deduced from a dynamical analysis of the adaptation map.
The main objective of the manuscript is  to characterize these patterns, and the main results are summarized below.

\begin{figure}[htbp]
	\centering
		\includegraphics[width=\textwidth]{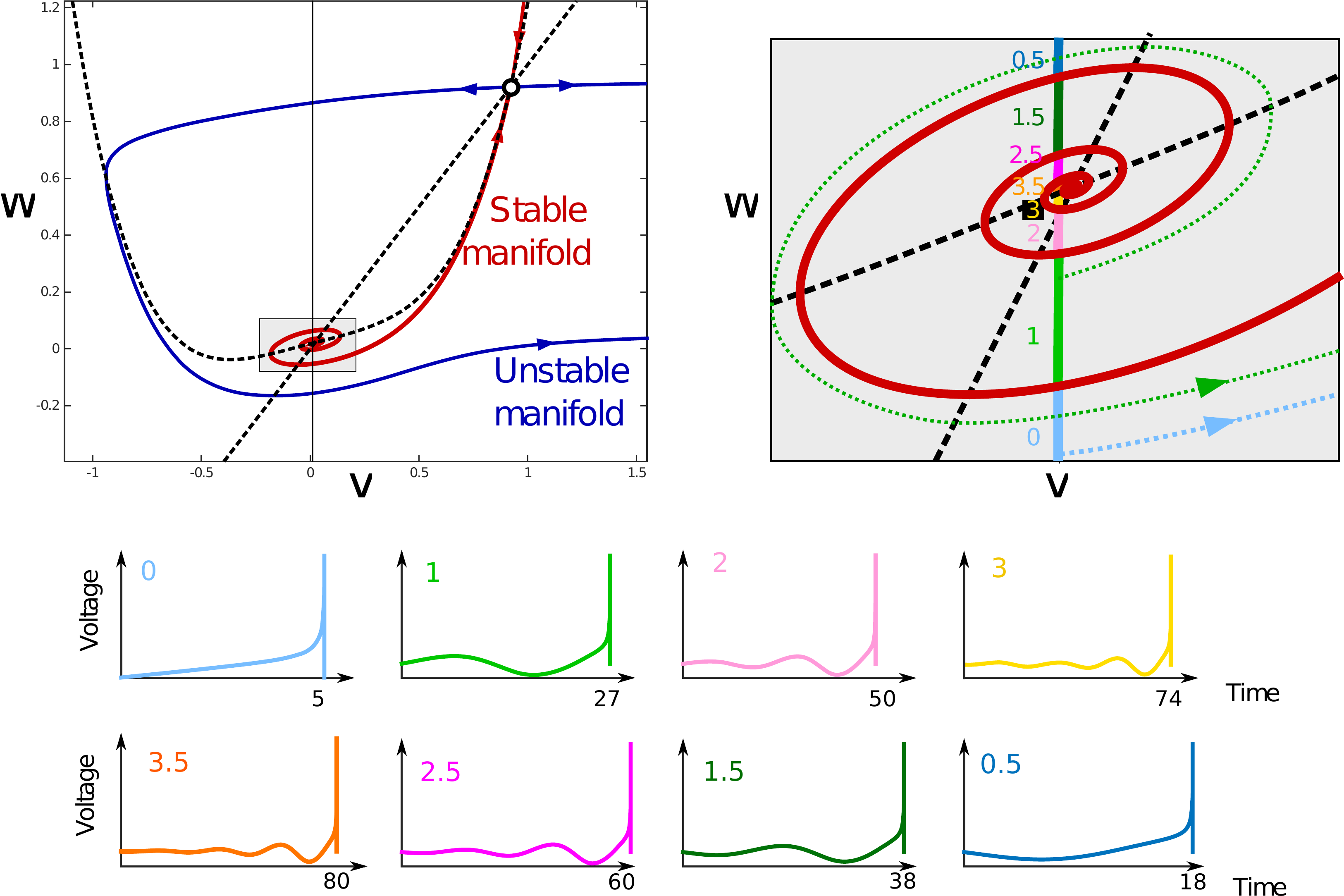}
	\caption{The geometry of MMOs: (upper row) Phase plane with $v$ and $w$ nullclines (dashed black) and stable (red) and unstable (blue) manifolds of the saddle; the stable manifold winds around the repulsive singular point. The reset line $\{v=v_R\}$ (solid vertical line) intersects the stable manifold, separating out regions such that trajectories emanating from  each undergo a specific number of small oscillations (colored segments, here from 0 to 3 below the $w$-nullcline and from $3.5$ to $0.5$ above). (Lower rows) The solution for one given initial condition in each segment. Note that the time interval varies in the different plots (indicated on the $x$-axis). Simulations had initial conditions $v=v_R=0.012$ and $w$ chosen within the different intervals on the reset line.}
	\label{fig:Structure}
\end{figure}

We establish that, as a transient behavior, the system can feature MMOs with all possible finite signatures (Proposition \ref{Newallpatterns} and Corollary \ref{Newallpatterns2}). Non-transient behaviors are deduced from the iterates of the adaptation map, which may feature several discontinuities and therefore support a very wide range of possible dynamics. We concentrate in section~\ref{sec:OneDisc} on the case where the adaptation map features a single discontinuity within its invariant interval. As in the seminal study of Keener on maps with one discontinuity~\cite{keener}, we distinguish two cases depending on the monotonicity of the lift, called overlapping or non-overlapping cases. In the non-overlapping case (see subsection \ref{nonoversec}), we characterize the rotation number of the associated adaptation map and show that it characterizes the MMO signature (Theorem \ref{dynamics2}) or the chaotic nature of the spike pattern fired. In the overlapping case (see subsection \ref{sec:overlap}), the adaptation map yields rotation intervals with rational numbers corresponding to periodic orbits with MMOs (Proposition  \ref{Effective}). To go beyond this description, we provide conditions that guarantee existence of periodic orbits with arbitrary periods, all displaying MMBOs (Proposition \ref{male1}). 
Eventually, we discuss how the methods used here could be extended to cases with multiple discontinuity points within the invariant interval of the adaptation map (Theorem \ref{GeneralOldHeavy}).

\section{The adaptation map}\label{sec:AdaptationMap}
We start by characterizing the properties of the adaptation map $\Phi$ given in (\ref{eq:admap}). In the absence of singular points of the subthreshold dynamics, it is defined and continuous on $\R$, and the nature of its orbits distinguishes regular spiking (fixed point of the map), bursting (periodic orbit of the map) or chaotic spiking~\cite{paper1,touboul-brette:09}. In the present case with two singular points (unstable focus and saddle), we show that $\Phi$ is undefined at specific points, no longer continuous and has unbounded derivative, but its orbits still provide all the information necessary to characterize the associated MMO patterns. 

\subsection{Properties of the adaptation map}\label{subsec:AdaptationMap}

 Throughout the manuscript, we assume that  the vector field \eqref{eq:SubthreshDyn} has two unstable singular points, the repulsive singular point $(v_-,w_-)=(v_-,F(v_-)+I)$ and the saddle singular point $(v_+,w_+)=(v_+,F(v_+)+I)$, with $v_-<v_+$ (see~\cite{touboul:08,touboul-brette:09} providing detailed bifurcation analysis of the subthreshold dynamics). We denote by $\W^s$ and $\W^u$ the stable and unstable manifolds of the latter singular point; each of these is decomposed into two branches (Fig. \ref{fig:ImportantPoints}) with $\W^s_-$ ($\W^s_+$) extending towards $w<w_+$ ($w>w_+$) and $\W^u_-$ ($\W^u_+$) extending towards $v<v_+$ ($v>v_+$). 
 The shape of the map $\Phi$ is organized around a few important points (see Fig.~\ref{fig:ImportantPoints}):
\begin{itemize}
	\item We denote by $w^*=F(v_R)+I$ the intersection of the reset line $v=v_R$ with the $v$-nullcline.
	\item We denote by $w^{**}=b v_R$ the intersection of the reset line with the $w$-nullcline.
	\item We denote by $\{ w_i\}_{i=1}^{p}$ the sequence of intersections of the reset line with $\W^s$, labeled in increasing order with respect to the value of $w$. As long as $v_R \neq v_-$, there exists a finite number of such points or none depending on the parameter values: an even number of intersections for $v_R<v_-$ and an odd number for $v_R>v_-$. We denote by $p_1$ the index such that $\{ w_i\}_{i\leq p_1}$ are below the $v$-nullcline and $\{ w_i\}_{i>p_1}$ are above; it is easy to see that $p_1$ is the smallest integer larger than or equal to $p/2$, i.e. $p_1=\lceil p/2 \rceil$. The points $\{ w_i\}$ split the real line into $p+1$ intervals that we denote $\{ I_i\}_{i=0}^{p}$, with $I_0=(-\infty,w_1)$, $I_i=(w_i,w_{i+1})$, $i=1,2,...,p-1$, and $I_p=(w_p,\infty)$. Remark that these intervals precisely correspond to those in which the number of small oscillations occurring between two consecutive spikes is constant, except the interval $I_{p_1}$, which is split into two subintervals by $w^*$ (see Fig.~\ref{fig:Structure}). The number of small oscillations for trajectories starting from $I_i$ is
	\begin{equation}\label{nosmall}
	\begin{cases}
	 i &\text{ if } i< p_1, \\
	 (p+1/2)-i &\text{ if } i>p_1,\\
	 p_1 &\text{ if } i=p_1 \text{ and } w<w^*, \\
	 p_1+1/2 &\text{ if } i=p_1 \text{ and } w>w^* \text{ and $p$ is even}\\
     p_1-1/2 &\text{ if } i=p_1 \text{ and } w>w^* \text{ and $p$ is odd}.
	\end{cases}
	\end{equation}

	\item We denote by $w_{\lim}^-<w_{\lim}^+<\infty$ the limit of the adaptation variable when $v \to  +\infty$ along $\W^u_-$ and $\W^u_+$ respectively. In addition, we introduce the corresponding values obtained through the reset mechanism:
	\[
	\beta=\gamma w_{\lim}^- +d, \quad \alpha =\gamma w_{\lim}^+ +d.
	\]
\end{itemize}

\begin{figure}[htbp]
	\centering
	\includegraphics[width=.8\textwidth]{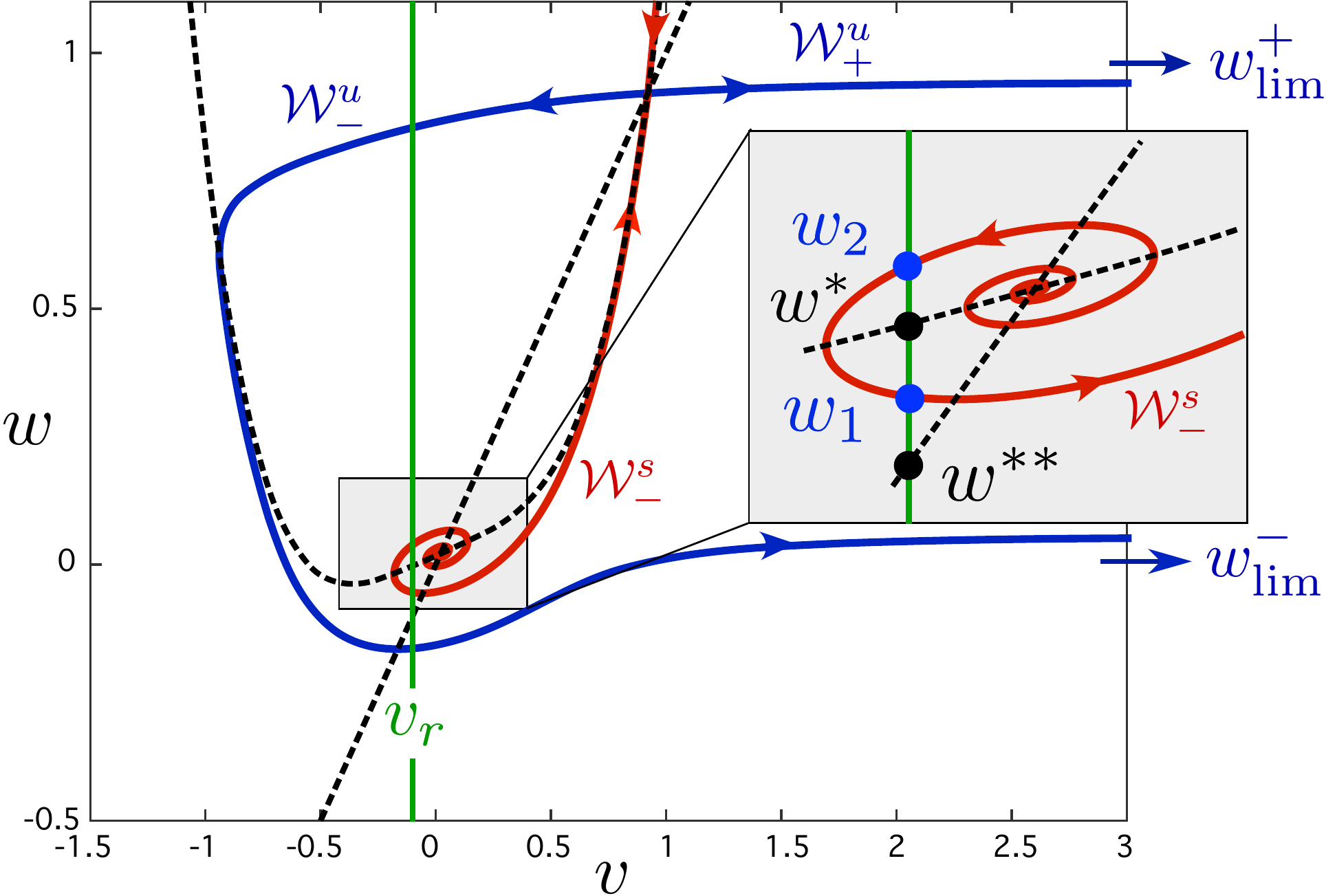}
	\caption{Geometry of the phase plane with indication of the points relevant in the characterization of the adaptation map $\Phi$. In this example, there are only $p=2$ intersections of $\{ v=v_R\}$ with $\W^s$ (thus $p_1=1$).}
	\label{fig:ImportantPoints}
\end{figure}

With these points defined, we can characterize the shape of the adaptation map. When we refer to the adaptation map, we abuse notation and use $w_i$ to denote the $w$-coordinates of the points of intersection of $W^s$ with $\{ v=v_R \}$. 
\begin{theorem}\label{prop_phi}
The adaptation map $\Phi$ has  the following properties.
\begin{enumerate}
  \item It is defined for all $w\in\mathcal D := \mathbb{R}\setminus \{w_i\}_{i=1}^p$.
  \item It is regular (at least $C^3$) everywhere except at the points $\{w_i\}_{i=1}^p$.
  \item In any given interval $I_i$ with $i\in\{0,\cdots, p\}$, the map is increasing for $w<w^*$ and decreasing for $w>w^*$.
  \item At the boundaries of the definition domain $\mathcal D$, $\{w_i\}_{i=1}^p$, the map has well-defined and distinct left and right limits:
  \[
	\begin{cases}
		  \lim\limits_{w\to w_i^{-}}\Phi(w)=\alpha \text{ and } \lim\limits_{w\to w_i^{+}}\Phi(w)=\beta \text{ if } i\leq p_1, \\
		  \lim\limits_{w\to w_j^{-}}\Phi(w)=\beta \text{ and } \lim\limits_{w\to w_j^{+}}\Phi(w)=\alpha \text{ if } j>p_1.
	\end{cases}
  \]
  \item The derivative $\Phi^{\prime}(w)$ diverges at the discontinuity points\footnote{With a slight abuse of terminology, we refer to the points $w_i$ as \emph{discontinuity points} although $\Phi$ is formally not defined at $w_i$.}:
  \[
  	\begin{cases}
  		\lim\limits_{w\to w_i^{\pm}}\Phi^{\prime}(w)=+\infty \text{ if }  i\leq p_1,\\
		\lim\limits_{w\to w_i^{\pm}}\Phi^{\prime}(w)=-\infty \text{ if } i> p_1.
  	\end{cases}
	\]
  \item $\Phi$ has a horizontal plateau for $w\to+\infty$ provided that
  \begin{equation}\label{plat_cond}
  \lim\limits_{v\to-\infty}F^{\prime}(v)<-\eps(b+\sqrt{2}).
  \end{equation}

  \item For $w<\min\left(\frac{d}{1-\gamma},w_1,w^{**}\)$, we have $\Phi(w)\geq \gamma w+d>w$.
  \item If $v_R<v_+$, $\Phi(w) < \alpha$ for all $ w\in \mathcal D$. Moreover, for any $w$ taken between the two branches of the unstable manifold of the saddle, hence in particular for $w\in (w_1,w_p)$, $\Phi(w)>\beta$.

\end{enumerate}
\end{theorem}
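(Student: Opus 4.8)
The plan is to treat the adaptation map as the composition $\Phi(w)=\gamma\,W(t^{*},v_R,w)+d$ of flow maps of \eqref{eq:SubthreshDyn} with the reset, and to read off each item from the phase portrait together with the blow-up behaviour established in \cite{touboul:09,paper1}. For item~1, the only orbits starting on $\{v=v_R\}$ that fail to blow up are bounded ones: $v\to-\infty$ is impossible since $F(v)-w+I\to+\infty$ there forces $\dot v>0$, and $|w|\to\infty$ with $v$ bounded contradicts $\dot w=\eps(bv-w)$; by Poincar\'e--Bendixson a bounded orbit converges to a fixed point or a periodic orbit, and \cite{paper1} excludes periodic orbits in this regime and shows $(v_-,w_-)$ is a repeller, so such an orbit lies on $\W^s$, i.e.\ $w\in\{w_i\}$. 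Hence $\Phi$ is defined exactly on $\mathcal D$. For item~2, near the blow-up time $t^{*}(w_0)$ one passes to a coordinate in which $\{v=\infty\}$ is reached transversally in finite time (as in \cite{touboul:09,paper1}); then $t^{*}$ is $C^3$ in $w$ by the implicit function theorem and $\Phi$ is a composition of $C^3$ maps near any $w_0\in\mathcal D$.

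Items~3, 7 and~8 rest on orientation preservation of the planar flow and on distinct orbits not crossing. For item~3 I would insert a section $\{v=v^{\dagger}\}$ with $v^{\dagger}>v_+$ so large that every escaping orbit crosses it once moving rightward and then proceeds monotonically in $v$ to blow-up; on that section $w_{\lim}$ is strictly increasing in the crossing height (a graph-over-$v$ argument from non-crossing), so the monotonicity type of $\Phi$ on $I_i$ coincides with that of the first-hitting map from $\{v=v_R\}$ to $\{v=v^{\dagger}\}$, which is a diffeomorphism on $I_i$ (since $\W^s$ does not meet $\{v=v_R\}$ over $I_i$) and is increasing precisely when the orbit leaves $\{v=v_R\}$ to the right, i.e.\ when $\dot v=w^{*}-w>0$, and decreasing when $w>w^{*}$. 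For item~7, when $w<\min(w_1,w^{**})$ the orbit lies in $I_0$, does not wind around the focus, starts with $\dot v>0$ and $\dot w=\eps(bv_R-w)>0$, and one checks it stays below the $w$-nullcline $\{w=bv\}$, so $w$ is nondecreasing along it and $W(t^{*-},v_R,w)\ge w$, i.e.\ $\Phi(w)\ge\gamma w+d$; the remaining inequality $\gamma w+d>w$ is just $w<d/(1-\gamma)$. Item~8 is a trapping argument against the invariant curves $\W^u_{\pm}$: with $v_R<v_+$, orbits from $\{v=v_R\}$ start strictly inside the region bounded by $\W^u_+$ and, by non-crossing, remain below it up to blow-up, so $W(t^{*-},v_R,w)<w_{\lim}^{+}$ and $\Phi(w)<\alpha$; orbits between the two branches of $\W^u$ stay above $\W^u_-$, giving $\Phi(w)>\beta$.

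For items~4 and~5 I would localise near the saddle. The Jacobian $J$ of \eqref{eq:SubthreshDyn} at $(v_+,w_+)$ has eigenvalues $\lambda>0>-\mu$ with $\lambda-\mu=\operatorname{tr}J=F'(v_+)-\eps$; the unstable-focus hypothesis gives $\eps<F'(v_-)<b$, and strict convexity of $F$ with $v_-<v_+$ then yields $F'(v_+)>F'(v_-)>\eps$, hence $\lambda>\mu$ and $\nu:=\mu/\lambda\in(0,1)$. Choosing sections transverse to $\W^s$ and to $\W^u$ near the saddle, the local passage map, after a smooth change of coordinates, equals $|u|^{\nu}$ up to a bounded, bounded-away-from-zero factor, where $u$ is a signed coordinate transverse to $\W^s$; its derivative grows like $|u|^{\nu-1}\to\infty$, and the sign of $u$ (the side of $\W^s$) selects the branch of $\W^u$ along which the orbit escapes. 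Composing with the $C^3$ flow maps from $\{v=v_R\}$ to the incoming section and from the outgoing section along $\W^u_{\pm}$ out to blow-up produces both the one-sided limits $\alpha,\beta$ of item~4 and the divergence $\Phi'\to\pm\infty$ of item~5; which one-sided limit equals $\alpha$ and which equals $\beta$ (and hence the sign of the divergence) is read off from the orientation with which the spiralling $\W^s$ crosses $\{v=v_R\}$, which reverses upon passing the $v$-nullcline, causing the swap at $i=p_1$, and is consistent with item~3 ($+\infty$ on the side where $\Phi$ increases near $w_i\le w^{*}$, $-\infty$ where it decreases).

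Finally, for item~6 and $w\to+\infty$ the orbit from $(v_R,w)$ has $\dot v<0$ initially and makes a large leftward excursion with $v\to-\infty$ before returning; since $F'(v)\to\lim_{v\to-\infty}F'(v)=:-c<0$, the vector field is asymptotically homogeneous of degree one in $(v,w)$ as $v\to-\infty$, and I would study the induced scalar equation for $z=w/v$, whose equilibria are the roots of $z^{2}+(c-\eps)z+\eps b$; condition \eqref{plat_cond} is the quantitative input that renders the relevant root attracting as $v\to-\infty$ in $\{z<0\}$, so that all orbits with $w$ large are funnelled onto a single limiting orbit and $\Phi$ flattens. I expect items~4--5 (the singular saddle-passage estimate together with the bookkeeping of branches and signs around the spiral) and item~6 (the far-field estimate pinning down the plateau) to be the genuine work; items~1--3 and~7--8 are comparatively routine consequences of non-crossing, orientation preservation, and the available blow-up theory.
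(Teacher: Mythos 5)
Your proposal follows essentially the same architecture as the paper for the core items 4--5: verify that the expansion at the saddle dominates the contraction (your computation $\lambda-\mu=\operatorname{tr}J=F'(v_+)-\eps\geq F'(v_-)-\eps>0$, using convexity and the unstable-focus condition, is exactly the paper's), extract a power-law singularity of exponent $\mu/\lambda\in(0,1)$ from the local passage map near the saddle, and compose with regular correspondence maps to get the one-sided limits $\alpha,\beta$ and the divergence of $\Phi'$. The one substantive gap is the phrase ``after a smooth change of coordinates, [the local passage map] equals $|u|^{\nu}$ up to a bounded, bounded-away-from-zero factor'': that is precisely the delicate point, and it is the entire content of the paper's Lemma~\ref{lem:infiniteDerivative}. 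Hartman--Grobman only provides a \emph{homeomorphism} conjugating the flow to its linearization, which gives no control on derivatives; to obtain the $|u|^{\mu/\lambda}$ asymptotics with factors bounded away from $0$ and $\infty$ one needs a conjugacy that is at least $C^1$ with non-degenerate derivative near the saddle. In the plane this is true for $C^2$ vector fields by Stowe's linearization theorem \cite{stowe1986linearization} (the route the paper takes), or one must estimate the Dulac/transition map directly; neither is automatic (smooth linearization is generally obstructed by resonances), so this step needs an explicit argument or citation rather than an assertion.

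Two smaller points. First, to convert the divergence at a finite outgoing section into divergence of $\Phi'$ itself, you need the correspondence map from that section to the blow-up ``section'' $\{v=+\infty\}$ to have derivative bounded away from zero; your compactification sketch is plausible, but the paper makes this quantitative via the explicit formula \eqref{formula_for_derivative}, $\partial W/\partial w_0(v)=\exp\bigl(\int_{v_R}^{v}\eps(bu-F(u)-I)/(F(u)-W(u)+I)^2\,\mathrm{d}u\bigr)$, together with integrability of $\eps/F$ at $+\infty$ (superquadratic growth of $F$); some version of this input must appear in your composition argument. Second, your far-field reduction for item~6 gives equilibria of $z^2+(c-\eps)z+\eps b$, which exist only when $(c-\eps)^2\geq 4\eps b$; this is neither equivalent to nor implied by the stated hypothesis \eqref{plat_cond} (e.g.\ $\eps=0.1$, $b=1$, $c=0.3$ satisfies \eqref{plat_cond} but not your discriminant condition), so as written your sketch would prove a different statement. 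The paper does not reprove this item: it, together with items 1--3, 7, 8, is imported as a straightforward extension of the analysis in \cite{touboul:08,touboul-brette:09}; your Poincar\'e--Bendixson and non-crossing arguments for those routine items are fine, and your item~4 mechanism (hugging $\W^s$, then exiting along one branch of $\W^u$ depending on the side, with the swap at $p_1$) matches the paper's.
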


In comparison to the case without singular points~\cite[Theorem 3.1]{touboul-brette:09}, the map loses continuity, convexity, and uniqueness of the fixed point, but the monotonicity property (point 3), the presence of a plateau (point 6) and the comparison with identity (point 7) remain true. 
The presence of discontinuities and divergence of the map  derivative substantially change the nature of the dynamics as we will see below. It is worth noting that this divergence is a general property of correspondence maps in the vicinity of saddles (see Fig.~\ref{fig:label}), as we show in the following:

\begin{lemma}\label{lem:infiniteDerivative}
	Consider a two-dimensional smooth vector field (at least $C^2$) with a hyperbolic saddle $x_0$ associated with the eigenvalues $-\mu<0<\nu$ of the linearized flow. We denote by $W^s$ and $W^u$ the stable and unstable manifolds of the saddle and consider two transverse sections $S_s$ and $S_u$ intersecting $W^s$ and $W^u$ at $x_s$ and $x_u$, respectively. There exists $\Omega_s$ a one-side neighborhood of $x_s$ on $S_s$ that maps onto a one-side neighborhood $\Omega_u$ of $x_u$ on $S_u$. The correspondence map $\Psi:\Omega_s \mapsto \Omega_u$ is differentiable in $\Omega_s$ and we denote by $\Psi'$ the one-sided differential of $\Psi$ at $x_s$. We have:
	\begin{equation}\label{eq:differential}
		\Psi'=
		\begin{cases}
			\infty & \text{ if } \nu-\mu>0\\
			0 & \text{ if }\nu-\mu<0
		\end{cases}
	\end{equation}
	When $\mu=\nu$, the differential is finite and its value depends on the precise location of the sections.
\end{lemma}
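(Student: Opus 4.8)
The plan is to reduce the computation to the linearized flow near the saddle, where everything can be made explicit, and then control the nonlinear correction using a smooth linearizing change of coordinates. First I would invoke the Hartman--Grobman theorem in its $C^1$ form (or, since the field is $C^2$ and the saddle is hyperbolic in the plane, a $C^1$-linearization \`a la Hartman; the Sternberg conditions on resonances are generically irrelevant here because we only need $C^1$ regularity of the conjugacy) to obtain local coordinates $(\xi,\eta)$ near $x_0$ in which the flow is exactly $\dot\xi=-\mu\xi$, $\dot\eta=\nu\eta$, with $W^s=\{\eta=0\}$ and $W^u=\{\xi=0\}$. Because this conjugacy is a $C^1$ diffeomorphism, it maps the transverse sections $S_s$ and $S_u$ to curves that remain transverse to $W^s$ and $W^u$ respectively, and it suffices to prove the derivative statement for the straightened problem and then note that composing with the (finite, invertible) derivatives of the conjugacy on each section does not change whether the one-sided derivative is $0$, finite, or $\infty$.

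In the linear coordinates I would parametrize $\Omega_s \subset S_s$ by its $\eta$-coordinate, say $\eta = \sigma \geq 0$ small (one-sided), with the point on $W^s$ corresponding to $\sigma = 0$; near $x_s$ the section is a graph $\xi = \xi_s + O(\sigma)$ with $\xi_s>0$. Likewise parametrize $\Omega_u\subset S_u$ by its $\xi$-coordinate $\tau\geq 0$. A trajectory starting at $(\xi_s+O(\sigma),\sigma)$ is $(\,(\xi_s+O(\sigma))e^{-\mu t},\ \sigma e^{\nu t}\,)$; the transit time $T(\sigma)$ to reach $S_u$ is determined by the $\eta$-coordinate hitting the fixed height $\eta_u>0$ of $x_u$, giving $\sigma e^{\nu T} = \eta_u + O(\cdot)$, hence $e^{\nu T} \asymp \sigma^{-1}$ and $e^{-\mu T}\asymp \sigma^{\mu/\nu}$. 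Then the $\xi$-coordinate at arrival is $\tau(\sigma) = (\xi_s+O(\sigma))e^{-\mu T(\sigma)} \asymp \sigma^{\mu/\nu}$. Therefore $\Psi'(0)=\lim_{\sigma\to 0^+} d\tau/d\sigma \asymp \sigma^{\mu/\nu - 1}$, which tends to $+\infty$ when $\mu/\nu<1$, i.e. $\nu-\mu>0$, and tends to $0$ when $\mu/\nu>1$, i.e. $\nu-\mu<0$; when $\mu=\nu$ the exponent is $1$ and $\tau(\sigma)$ is asymptotically linear in $\sigma$ with a finite nonzero slope that depends on $\xi_s$ and $\eta_u$, hence on the sections. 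This is exactly \eqref{eq:differential} together with the concluding remark.

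The main obstacle, and the place where one must be careful rather than heuristic, is to make the estimates above rigorous uniformly as $\sigma\to0^+$, i.e. to show that the $O(\cdot)$ error terms in the transit-time equation and in the section parametrizations do not destroy the leading power law and, crucially, that they are differentiable with controlled derivatives so that differentiating the relation $\tau=\tau(\sigma)$ is legitimate. The clean way to handle this is to work directly with the transition (Dulac-type) map in the linearized coordinates: fix $S_s=\{\xi=\xi_s\}$ and $S_u=\{\eta=\eta_u\}$ as the straightened sections (legitimate since any transverse section differs from these by a $C^1$ reparametrization with finite invertible derivative at the base point), on which the map is literally $\sigma \mapsto \xi_s (\eta_u/\sigma)^{-\mu/\nu} \cdot \eta_u^{\,0}$-type, i.e. $\tau = \xi_s\,\eta_u^{-\mu/\nu}\,\sigma^{\mu/\nu}$, an explicit monomial whose one-sided derivative at $0$ is manifestly $+\infty$, $0$, or the finite value $\xi_s/\eta_u$ according to the sign of $\nu-\mu$. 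Pulling back through the $C^1$ conjugacy and through the original sections multiplies this by nonzero finite Jacobian factors on each side, leaving the trichotomy unchanged; the existence of the one-sided neighborhoods $\Omega_s,\Omega_u$ with $\Psi(\Omega_s)=\Omega_u$ follows from the $\lambda$-lemma (inclination lemma), which also guarantees $\Psi$ is a $C^1$ diffeomorphism on the open part $\sigma>0$ and extends continuously (with the stated one-sided derivative) to $\sigma=0$.
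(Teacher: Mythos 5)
Your proposal is correct and follows essentially the same route as the paper's proof: an explicit Dulac-type computation for the linearized saddle giving the monomial transition map $\sigma\mapsto c\,\sigma^{\mu/\nu}$, upgraded to the nonlinear flow via a $C^1$ linearization of a planar $C^2$ saddle (the paper cites Stowe's two-dimensional linearization theorem where you invoke a Hartman-type $C^1$ result), and then composition with section-to-section correspondence maps whose derivatives are finite and bounded away from zero. The only cosmetic difference is your appeal to the inclination lemma for the one-sided neighborhoods, which the paper leaves implicit via the flow box theorem and smooth dependence on initial conditions.
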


\begin{figure}
	\centering
		\includegraphics[width=.7\textwidth]{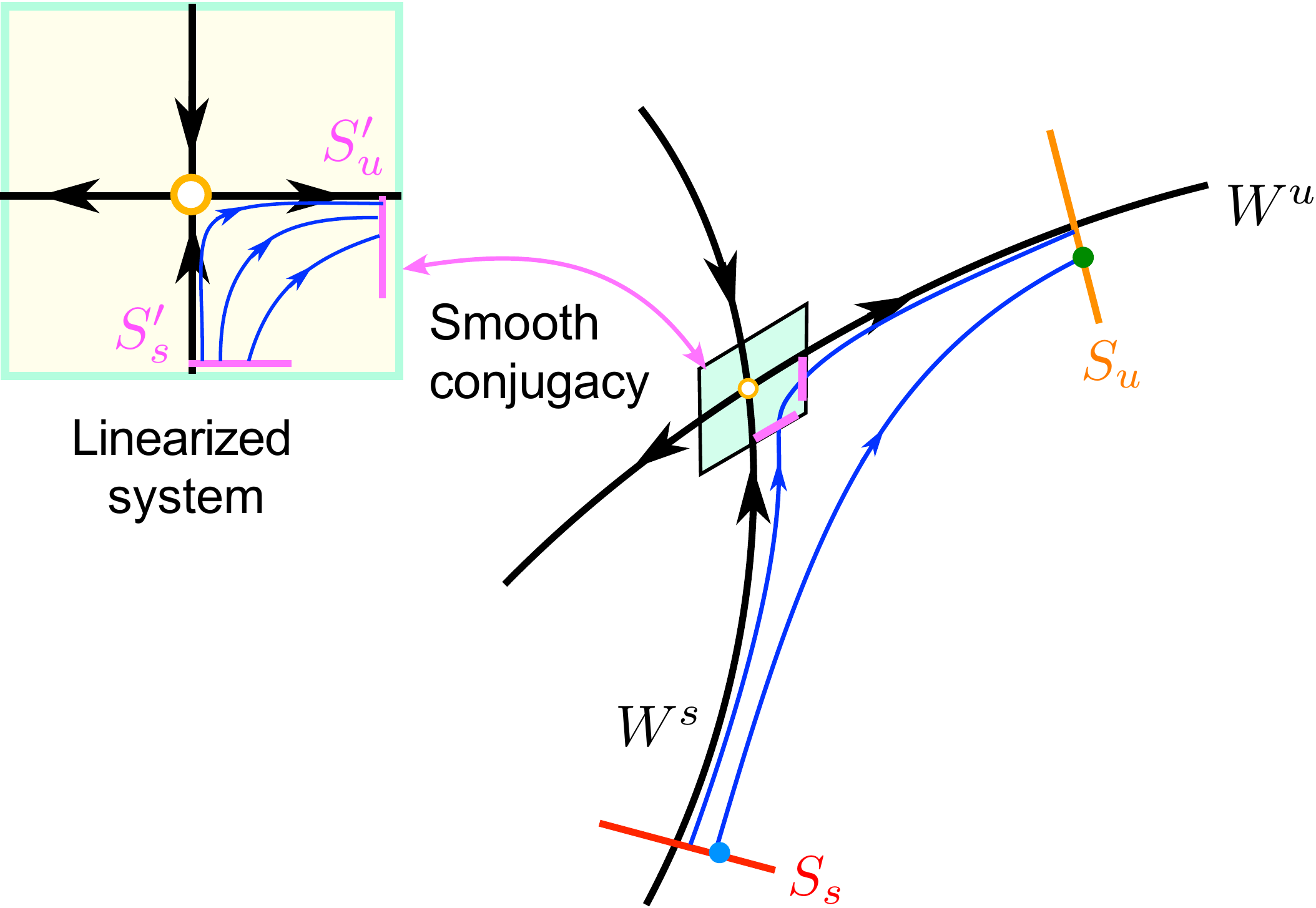}
	\caption{Typical topology of manifolds and sections in Lemma~\ref{lem:infiniteDerivative}: we consider correspondence map between sections $S_s$ (red) and $S_u$ (orange) transverse, respectively,  to the stable and unstable manifolds (black lines) of the saddle (orange circle). Typical trajectories are plotted in blue. The key arguments are the characterization of correspondence maps associated with the linearized system (upper left inset) between two transverse sections $S_s'$ and $S_u'$, and the smooth conjugacy between the nonlinear flow and its linearization.}
	\label{fig:label}
\end{figure}

\begin{proof}
	Let us start by considering the linearized system in the vicinity of the saddle singular point. In the basis that diagonalizes the Jacobian, we can write the system in the simple form:
	\[\begin{cases}
		\dot x = -\mu x\\
		\dot y = \nu y
	\end{cases}\]
	and considering a section $S_s'$ corresponding to $y=y_0$ and a section $S_u'$ corresponding to $x=x_0>0$, simple calculus leads to the formula that the correspondence map $\varphi$ of the linearized system between $S_s'$ and $S_u'$ is defined for $x\geq 0$ by $\varphi(x)=(y_0/x_0^{\xi})x^{\xi}$ with $\xi=\mu/\nu$. Hence, the differential of $\varphi$ at $0^+$ is such that:
	\begin{itemize}
		\item it diverges if $\xi<1$, hence for $\nu-\mu>0$ (i.e. if the dilation along the unstable direction is stronger than the contraction along the stable direction);  
		\item it vanishes if $\xi>1$, hence for  $\nu-\mu<0$ (i.e. if the contraction along the stable direction is stronger than the dilation along the unstable direction); 
		\item when $\xi=1$ (i.e. contraction and dilatation are of the same intensity), we find $\varphi'(0^+) = y_0/x_0$ which depends on the precise location of the sections.
	\end{itemize}
To demonstrate the lemma, we thus need to show that the same result holds for the nonlinear system. The Hartman-Grobman Theorem~\cite{hartman1963local}, which ensures that the nonlinear system is conjugated to its linearization through a homeomorphism in the vicinity of the (hyperbolic) saddle, will not be sufficient; we need to ensure that the nonlinear and linear flows are locally conjugated via \emph{smooth diffeomorphisms} (at least $C^1$). Finding smooth conjugacy is a subtle question for a general dynamical system that has been the object of significant research and generally requires avoiding resonances in the eigenvalues, which may lead to a relatively complex relationship~\cite{samovol1989necessary,sternberg1957local}. In two dimensions, the problem is simpler and it was proved in  \cite{stowe1986linearization} that any $C^2$ planar dynamical system in the neighbourhood of a saddle is smoothly (with at least $C^1$ regularity) conjugated with its linearization, and the derivative of this conjugacy is bounded away from $0$ in a sufficiently small neighborhood of the saddle (since this conjugacy converges in a $C^{1}$-sense towards the identity close from the saddle).  Completing the proof thus only amounts to showing that the correspondence maps from a neighborhood of $S_s$ to $S_s'$ and from $S_u'$ to a subset of $S_u$ are smooth with differential bounded away from zero and infinity. This is a classical consequence of the flow box theorem and regularity with respect to the initial condition.
\end{proof}

Now that this general result is proved, we proceed to establish the properties of the adaptation map by proving Theorem~\ref{prop_phi}.

\begin{proof} \emph{of Theorem \ref{prop_phi}}: 
First, note that generalization of the reset mechanism by introducing a parameter $\gamma\in (0,1]$ does not substantially impact the shape of the adaptation map. Indeed, all properties rely on the map associating with a point on the reset line $(v_R,w)$ the value $\varphi(w):=W(t^{*-};v_R,w)$ of the adaptation variable at the time $t^*$ of the subsequent spike, since $\varphi(w)=(\Phi(w)-d)/\gamma$. In other words, the generalization of the reset mechanism does not introduce any new mathematical difficulty. Hence, the proofs for items 1. to 3. and 6. to 8. are straightforward extensions of the analogous proof in~\cite{touboul:08} or simple algebra. Similarly, the proof of property 4. follows reasoning analogous to that of 3., with left and right limits found by finely characterizing the shape of the trajectories as $w$ approaches one of the discontinuity points $w_i$. In all cases, the trajectory will initially remain very close to the stable manifold, before leaving the vicinity of the stable manifold near the saddle and following the unstable manifold. Depending on whether the trajectory approaches the saddle from the right or from the left, it will either follow the left or right branch of the unstable manifold, hence either converge towards $w_{\lim}^+$ or $w_{\lim}^-$. 
	
We focus on the proof of property 5., which requires specific analysis. We use Lemma~\ref{lem:infiniteDerivative} and prove that the conditions on the contraction and dilation near the saddle are satisfied. We consider the specific sections that define $\Phi$, namely $S_s=\{v=v_R\}$ (which is valid as long as the stable manifold is not tangent to the reset line) and a section corresponding to spiking, denoted with a slight abuse of notation as $S_u=\{v=+\infty\}$. The use of a section at $\infty$, however, does not exactly fit the statement of  Lemma~\ref{lem:infiniteDerivative}, and requires us to show that the differential of the correspondence map does not vanish as $v \to \infty$.

	First, note that since $(v_-,F(v_-)+I)$ is an unstable focus, the linearized flow there has two complex conjugate eigenvalues with positive real part and therefore the trace of the Jacobian, given by $F'(v_-)-\eps$, is strictly positive. Since $F$ is convex, the trace of the Jacobian at the saddle equals $F'(v_+)-\eps \geq F'(v_-)-\eps>0$. Hence the dilation at the saddle is always stronger than the contraction; in the notation of (\ref{eq:differential}), we have $\nu-\mu>0$.

To show that the infinite derivative persists when one considers $S_u=\{v=+\infty\}$, we express the map $\Phi$ formally in the region below the stable manifold of the saddle, which all spiking trajectories cross. In this region, any trajectory has a monotonically increasing voltage (that blows  up in finite time), and the orbit with initial condition $(v_R,w_0)$ can be expressed as the parametric curve $(v,W(v))$ with
	\begin{equation}\label{eq:Traj_recalled}
	\left\{
	  \begin{array}{ll}
	    \displaystyle{\frac{\mathrm{d}W}{\mathrm{d}v}=\frac{\eps\big(bv-W\big)}{F(v)-W+I} }, \\
	    W(v_R)=w_0.
	      \end{array}
	\right.
	\end{equation}
The expression of the differential of $W$ with respect to $w_0$ at $v$ is given by:
	\begin{equation}\label{eq:implicit derivative}
	\frac{\partial W}{\partial w_0}(v)= 1+\int_{v_R}^v\left(\frac{\eps(bu-F(u)-I))}{(F(u)-W(u)+I)^2}\right) \frac{\partial W(u)}{\partial w_0}\mathrm{d}u,
	\end{equation}
	with solution given, as a function of the trajectory $W$, by (see Peano's Theorem in \cite{phartman})~\footnote{From this expression one can propose an alternative direct (but particular) proof of the divergence of the one-sided (left) derivative at the points $w_i$ that does not rely on the general result of Lemma~\ref{lem:infiniteDerivative}. Indeed, the stable manifold $\W^s(u)$ has, close to $(v_+)^-$, a linear expansion
	$F(u)+I-\W^s(u) \sim -K(v_+ -u)$ with $K=\frac{1}{2} \left( \eps+F'(v_+)+\sqrt{(\eps+F'(v_+))^2-4\eps b}\right)>0$ and it is easy to deduce the divergence of the integral term within the exponential when $v\to (v_+)^-$.}:
	\begin{equation}\label{formula_for_derivative}
	\frac{\partial W}{\partial w_0}(v)=\exp\left(\int_{v_R}^{v}\frac{\eps(bu-F(u)-I)}{(F(u)-W(u)+I)^2}\mathrm{d}u\right).
	\end{equation}
	Hence, for any section $S_u=\{v=\theta\}$ (with $\theta<\infty$), the derivative of the map $\Phi$ cannot vanish. Furthermore, for $u$ large, we know that $W(u)$ remains finite and thus the integrand in~\eqref{formula_for_derivative} behaves as $-\eps/F(u)$ which is integrable at infinity (cf. Assumption (A1), \cite{paper1}). Consequently, the integral within the exponential term does not diverge towards $-\infty$ as $v\to\infty$ and the derivative (\ref{formula_for_derivative}) does not vanish at $S_u=\{v=\infty\}$. We further note that all correspondence maps away from singularities $(v_{-},F(v_{-})+I)$ and $(v_{+},F(v_{+})+I)$ are well-defined and with finite derivative bounded away from zero for the same reason. The intervals $(-\infty,w_1)$, $\{ I_i\}_{i=1}^p$, and $(w_p,\infty)$ on the line $\{v=v_R\}$ are transverse sections of the flow and correspondence maps from $I_i$ to $(-\infty,w_1)$ are increasing  for $i\leq p_1$ (hence the left and right differentials of $\Phi$ at $w_i^\pm$ for $i\leq p_1$ are equal to $+\infty)$ and decreasing otherwise (hence the left and right differentials at $w_i^\pm$ for $i>p_1$ are equal to $-\infty$). 
%
\end{proof}

\subsection{Transient MMO behaviors} \label{subsec:MMO_Signature}
We recall that at each discontinuity point $w_i$, the right and left limits of the adaptation map are always equal to either $\alpha$ or $\beta$. This property, related to the fact that all discontinuities correspond to intersections of the reset line with the stable manifold of the saddle, is a very important property that endows the system with a rich phenomenology, ensuring that it can generate MMOs of any signature.

We start by treating the case where the adaptation map has an infinite number of discontinuity points, which occurs  
in particular\footnote{This case also arises when the subthreshold dynamics \eqref{eq:SubthreshDyn} has a stable fixed point with a circular attraction basin bounded by an unstable limit cycle (orange region C in Fig.1.1 of~\cite{paper1}),  and $\{v=v_R\}$ intersects this limit cycle. This scenario involves a different fixed point structure than what we assume in this paper but the results on transient MMOs remain valid and the statements of further sections on asymptotic MMOs have their counterparts in this case.} when the reset line $\{v=v_R\}$ intersects the unstable focus $(v_-,F(v_-)+I)$.
We denote by $\{ m_i\}_{i \in \N^*}$ the $w$ values of the discontinuity points below the intersection $w^*$ of the reset line with the $v$-nullcline, with $m_i<m_{i+1}$ for any $i$. Similarly, we denote by $\{ M_i\}_{i \in \N^*}$ the $w$ values of the discontinuity points satisfying $M_i>w^*$ and $M_i>M_{i+1}$. We note that the left and right limits of $\Phi$ at $m_i$ ($M_i$) are well defined, equal to $\alpha$ and $\beta$, respectively ($\beta$ and $\alpha$, respectively)\footnote{Here the discontinuity points of $\Phi$ are denoted by $m_i$ and $M_i$, instead of $w_i$ as earlier, since we have two infinite sequences of intersections lying, respectively, below and above $w^*$ and we need to distinguish between them.}.

\begin{proposition}\label{Newallpatterns}
	Assume that the reset line has an infinite number of intersections with the stable manifold of the saddle. If moreover all the discontinuity points $\{ m_i\}_{i\in\mathbb{N}^*} \cup \{ M_i\}_{i\in\mathbb{N}^*}$ belong to $[\beta,\alpha]$, then for every $n\in \mathbb{N}^*$ and every finite sequence $\{ s_i\}_{i=1}^n$, where $s_i=k_i+l_i/2$, $k_i\in \mathbb{N}^*, \ l_i\in \{0,1\}$, there exists a set $J\subset [\beta,\alpha]$  with non-empty interior such that for any $w_0\in J$, the orbit with initial condition $(v_R,w_0)$ has a transient signature
\[
1^{s_1} 1^{s_2} 1^{s_3} ... 1^{s_n}.
\]
\end{proposition}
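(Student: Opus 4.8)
The plan is to recast the statement as a purely symbolic prescription for the $\Phi$-orbit of $w_0$ and then to produce $J$ by pulling a target interval back $n$ times, exploiting the fact that $\Phi$ sends every gap of its domain \emph{onto} the whole interval $(\beta,\alpha)$. I would first fix the dictionary between gaps and oscillation counts. Extending~\eqref{nosmall} to the present infinite configuration, the bounded connected components of the domain of $\Phi$ are exactly the intervals $(m_k,m_{k+1})$, $k\in\N^*$ (below $w^*$, accumulating at $w^*$) and $(M_{k+1},M_k)$, $k\in\N^*$ (above $w^*$, accumulating at $w^*$); the winding argument behind~\eqref{nosmall} shows that a trajectory issued from $(v_R,w)$ performs exactly $k$ small oscillations when $w\in(m_k,m_{k+1})$ and exactly $k+\tfrac12$ when $w\in(M_{k+1},M_k)$ (crossing one more discontinuity towards $w^*$ adds one turn near the focus). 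Hence, for each admissible $s=k+l/2$ with $k\in\N^*$ and $l\in\{0,1\}$, there is a distinguished bounded open interval $G(s)$ — namely $(m_k,m_{k+1})$ if $l=0$ and $(M_{k+1},M_k)$ if $l=1$ — consisting precisely of the points $w$ on the reset line that produce $s$ small oscillations before the next spike, and since by hypothesis all the $m_i,M_i$ lie in $[\beta,\alpha]$, each $G(s)\subseteq[\beta,\alpha]$.

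The crucial ingredient is that $\Phi$ maps every such gap homeomorphically onto the open interval $(\beta,\alpha)$. On $G(s)$ the map is $C^3$ and strictly monotone — increasing if $G(s)$ lies below $w^*$, decreasing if above, by point~3 of Theorem~\ref{prop_phi} (whose proof is local and applies verbatim) together with the non-vanishing of $\Phi'$ established in its proof — while by point~4 of Theorem~\ref{prop_phi} (extended to the present setting as recalled just before the Proposition) its one-sided limits at the two endpoints of $G(s)$ equal $\beta$ and $\alpha$, in one order or the other. A continuous strictly monotone function on an open interval with these two limits at the endpoints is a bijection onto $(\beta,\alpha)$, so $\Phi|_{G(s)}\colon G(s)\to(\beta,\alpha)$ has a continuous inverse. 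This is exactly the feature that lets the symbol sequence be prescribed at will: whatever gap the orbit currently occupies, its $\Phi$-image covers all of $(\beta,\alpha)$, hence in particular the interior of every $G(s')$.

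With this in hand I would build $J$ by backward recursion. Set $L_n:=G(s_n)$; for $j=n-1,\dots,1$, given a nonempty open interval $L_{j+1}\subseteq G(s_{j+1})\subseteq[\beta,\alpha]$, set $L_{j+1}':=L_{j+1}\cap(\beta,\alpha)$ — still a nonempty open interval, because a nonempty open subinterval of $[\beta,\alpha]$ always meets $(\beta,\alpha)$ — and define $L_j:=(\Phi|_{G(s_j)})^{-1}(L_{j+1}')$, a nonempty open subinterval of $G(s_j)$ with $\Phi(L_j)=L_{j+1}'$. Then $J:=L_1$ is a nonempty open interval, hence has nonempty interior, and $J\subseteq G(s_1)\subseteq[\beta,\alpha]$. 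For $w_0\in J$, an immediate induction gives $\Phi^{j-1}(w_0)\in L_j\subseteq G(s_j)$ for all $j=1,\dots,n$ (indeed $w_0\in L_1$, and $\Phi^{j-1}(w_0)\in L_j$ implies $\Phi^{j}(w_0)\in\Phi(L_j)=L_{j+1}'\subseteq L_{j+1}$ for $1\le j\le n-1$). Each iterate therefore lies in an open gap, hence off $\W^s$, so by point~1 of Theorem~\ref{prop_phi} (and because the focus is repulsive) the trajectory fires again, performing exactly $s_j$ small oscillations between its $(j-1)$-th and $j$-th spikes — that is, the orbit with initial condition $(v_R,w_0)$ has transient signature $1^{s_1}1^{s_2}\cdots 1^{s_n}$.

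The only genuinely delicate point is the surjectivity used in the second step: that each gap is sent onto \emph{all} of $(\beta,\alpha)$, not merely onto some gap-dependent subinterval (which would make the recursion of the third step break down as soon as a target interval fell outside the reachable range). This rests entirely on point~4 of Theorem~\ref{prop_phi} — all discontinuities of $\Phi$ arise from intersections of $\{v=v_R\}$ with the \emph{single} manifold $\W^s$, so the one-sided limits of $\Phi$ at every discontinuity take only the two values $\alpha$ and $\beta$. It is this ``full-branch'' property that decouples successive symbols and makes every finite signature realizable; the remaining pieces — the winding count of the first step and the interval bookkeeping of the third — are routine.
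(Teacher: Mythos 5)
Your proof is correct and follows essentially the same route as the paper's: identify the gaps between consecutive discontinuity points as the sets prescribing the number of small oscillations, use the full-branch property $\Phi(\text{gap})=(\beta,\alpha)$ (which you justify explicitly via piecewise monotonicity and the one-sided limits $\beta$ and $\alpha$ at the discontinuities), and pull the target intervals back iteratively to obtain a set $J$ with non-empty interior. The only difference is cosmetic: the paper asks $\Phi^{i}(w_0)\in I_{s_i}$ for $i=1,\dots,n$, so the uncontrolled oscillations of $w_0$ itself precede the signature, whereas you place $w_0$ directly in $G(s_1)$ and implicitly count the initial reset point as a spike --- an equally valid reading of ``transient signature,'' amounting to a harmless shift by one iterate.
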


\begin{proof} We recall that for $w\in (m_k,m_{k+1})$ (resp. $w\in (M_k,M_{k+1})$), the orbit passing through $(v_R,w)$ performs exactly $k$ (resp. $k+1/2$) small oscillations before spiking. Thus, proving the proposition amounts to finding a set of initial conditions with a prescribed topological dynamics. In detail, given an MMO pattern $1^{s_1}1^{s_2} \ldots 1^{s_n}$, where the $s_i$ are as above, we are searching for sequences of iterates of $\Phi$ falling sequentially in the intervals 
	\[I_{s_i}:=\begin{cases}
		(m_{k_i},m_{k_i+1}),  & \mbox{if} \; l_i=0 \\
		(M_{k_i},M_{k_i+1}), & \mbox{if} \;  l_i=1.
	\end{cases}\]
	The set of initial conditions corresponding to this prescribed signature is therefore exactly $\Phi^{-1}(I_{s_1})\cap \Phi^{-2}(I_{s_2})\cap ...\cap \Phi^{-n}(I_{s_n})$, and proving the theorem amounts to showing that this set is not empty, which relies on the particular shape of the map $\Phi$ and specifically on the fact that $\Phi(I_{s_k}) = (\beta,\alpha)$ for any admissible $s_k$.
This property implies that for any interval $\mathcal{J}\subset (\beta,\alpha)$ with non-empty interior and any admissible $s_{k}$, the intersection of the pre-image $\Phi^{-1}(\mathcal{J})$ with $I_{s_{k}}$ is an interval with non-empty interior. In turn, this fact allows us to establish the proposition by recursion on the length of the transient signature $n$. Indeed, for $n=1$, the set of initial conditions associated to a transient signature $1^{s_{1}}$ is the set $\mathcal J_{1}=\Phi^{-1}(I_{s_{1}})$, which has a non-empty interval of intersection with both $(m_k,m_{k+1})$ and $(M_k,M_{k+1})$. Let us now assume that the same property is true for some $n\in \mathbb{N}^{*}$, namely that for any sequence $\{ s_i\}_{i=1\cdots n}$, there exists a set of initial conditions with a non-empty  interval of intersection with both $(m_k,m_{k+1})$ and $(M_k,M_{k+1})$ from which trajectories have the transient signature $1^{s_{1}}\cdots 1^{s_{n}}$. Let us now fix a sequence $\{ s_i\}_{i=1 \cdots n+1}$. By the recursion assumption, the set $\mathcal{J}_{n}$ associated to the transient signature $1^{s_2} 1^{s_3} ... 1^{s_{n+1}}$ is such that $\mathcal{J}_{n} \cap I_{s_{1}}$ is an interval with non-empty interior. Consequently, the set $\mathcal{J}_{n+1}=\Phi^{{-1}}(\mathcal{J}_{n} \cap I_{s_{1}})$ contains a non-empty interval of intersection with all $I_{s_{k}}$ and any trajectory with initial condition within that interval has the transient signature $1^{s_1} 1^{s_2} 1^{s_3} ... 1^{s_n}1^{s_{n+1}}$.\end{proof}

\begin{remark}
We emphasize that Proposition \ref{Newallpatterns} does not assure the presence of bursts of activity (i.e. $s_i\in \{0,1/2\}$): this is due to the fact that  $\Phi(\,(\beta,m_1)\,)=(\Phi(\beta),\alpha)$ and $\Phi(\,(M_1,\alpha)\,)=(\Phi(\alpha),\alpha)$ might be proper subintervals of $(\beta,\alpha)$ and  the argument used in the proof no longer applies. Therefore to account for $s_i=0$ or $s_i=1/2$ one would need to make an additional technical assumption.
\end{remark}

	If the reset map has a finite number of discontinuities, exactly the same proof applies to show that any MMO pattern with an accessible number of small oscillations exists. This extension  is precisely summarized in the following result.

\begin{corollary}\label{Newallpatterns2} Suppose that the reset line $v=v_R$ has a finite number $p$ of intersections with $\W^s$, with $w$-coordinates ordered as $w_1<w_2<...<w_l<w_{l+1}<...< w_{l+k}<..<w_p$,  where $l\in \mathbb{N}^*$ denotes the largest index $i$ such that $w_i<\beta$ and exactly $k \geq 2$ intersections lie in $[\beta,\alpha]$: $\beta\leq w_{l+1}, ..., w_{l+k}\leq \alpha$. Let $\mathcal{S}$ denote the set of numbers of small oscillations performed by the trajectories with initial condition in $I_{l+1}=(w_{l+1},w_{l+2})$, $I_{l+2}=(w_{l+2},w_{l+3})$, ..., $I_{l+k-1}=(w_{l+k-1},w_{l+k})$ according to the formula \eqref{nosmall}. Then for any $n\in \mathbb{N}^*$ and any sequence $\{ s_i\}_{i=1}^{N}$ with each $s_i\in \mathcal{S}$, there exists an interval $\mathcal{J}\subset [\beta,\alpha]$ such that every initial condition $w\in\mathcal{J}$ yields a transient MMO with the pattern $1^{s_1} 1^{s_2} ... 1^{s_N}$.

If the number of intersections $w_i$ of the reset line $v=v_R$  with $\W^s$ is infinite but only $k$ of them lie in the interval $[\beta,\alpha]$, then we have exactly two possibilities:
\begin{itemize}
\item all the points $w_i$ in $[\beta,\alpha]$ are not greater than $w^*$ and for every $N\in\mathbb{N}^*$ and every sequence $\{ s_i\}_{i=1}^N$ with $s_i\in \{l+1,l+2,..., l+k-1\}$  there exists an interval $\mathcal{J}\subset [\beta,\alpha]$ such that every initial condition $w\in\mathcal{J}$ yields an MMO with the pattern $1^{s_1} 1^{s_2} ... 1^{s_N}$, where the index $l$ is obtained from the ordering
\[
w_1<w_2<...<w_l<\beta\leq w_{l+1}<...<w_{l+k}\leq \alpha<w_{l+k+1}<...\leq w^*.
\]
\item all the points $w_i$ in $[\beta,\alpha]$ are greater than $w^*$ and for every $N\in\mathbb{N}^*$ and every sequence $\{ s_i\}_{i=1}^N$ with $s_i\in \{l+3/2,l+5/2,..., l+(k-1)+1/2\}$  there exists an interval $\mathcal{J}\subset [\beta,\alpha]$ of initial conditions yielding MMOs with the pattern $1^{s_1} 1^{s_2} ... 1^{s_N}$, where the index $l$ is obtained from the ordering 
\[
w_1>...>w_l>\alpha\geq w_{l+1}>...>w_{l+k}\geq\beta>w_{l+k+1}>...\geq w^*.
\]
\end{itemize}
\end{corollary}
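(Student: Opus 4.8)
The plan is to run, essentially verbatim, the recursive construction used in the proof of Proposition \ref{Newallpatterns}; the only point that needs fresh verification is the surjectivity of $\Phi$ on the admissible intervals, and once that is established the inductive assembly of nested preimages carries over unchanged. So first I would isolate the single structural fact that drives everything. In the finite‑intersection setting, each admissible index $s\in\mathcal S$ corresponds to an interval $I_s\in\{I_{l+1},\dots,I_{l+k-1}\}$ bounded by two consecutive intersections $w_i<w_{i+1}$ of $\{v=v_R\}$ with $\W^s$, both lying in $[\beta,\alpha]$. By Theorem \ref{prop_phi} (parts 2 and 3), $\Phi$ is $C^3$ and strictly monotone on $I_s$ provided $w^*\notin I_s$, and by part 4 its one‑sided limits at $w_i^+$ and $w_{i+1}^-$ are $\alpha$ and $\beta$ (in the order fixed by whether $w_i$ lies below or above $w^*$). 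Hence $\Phi|_{I_s}$ is a homeomorphism of $I_s$ onto $(\beta,\alpha)$, and therefore, for any admissible $s$ and any subinterval $\mathcal J\subseteq(\beta,\alpha)$ with nonempty interior, $\Phi^{-1}(\mathcal J)\cap I_s$ is a nonempty open subinterval of $I_s$. This is exactly the combinatorial input the recursion needs.

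With this in hand, the induction on the signature length $N$ is literally that of Proposition \ref{Newallpatterns}. The base case is $\Phi^{-1}(I_{s_1})$, which by the homeomorphism property meets every admissible $I_s$ in a nonempty open set. For the inductive step, let $D$ be the set of initial conditions producing $1^{s_2}\cdots 1^{s_N}$ and assume $D\cap I_s$ has nonempty interior for every admissible $s$; then $I_{s_1}\cap D$ is nonempty, open, and contained in $(\beta,\alpha)$, so its $\Phi$‑preimage — which is precisely the set of initial conditions producing $1^{s_1}\cdots 1^{s_N}$ — again meets every admissible $I_s$ in a nonempty open subinterval, and any such subinterval (being contained in some $I_s\subseteq(w_{l+1},w_{l+k})\subseteq[\beta,\alpha]$) is the asserted $\mathcal J$. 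No new analysis of the flow is needed here: continuity, monotonicity, and the endpoint limits $\alpha,\beta$ are all supplied directly by Theorem \ref{prop_phi}.

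For the two infinite‑intersection subcases the dichotomy "all intersections in $[\beta,\alpha]$ lie at or below $w^*$" versus "all lie above $w^*$" is exactly what guarantees that none of the admissible intervals $I_{l+1},\dots,I_{l+k-1}$ straddles $w^*$, so the surjectivity argument of the first paragraph applies unchanged; the only modification is bookkeeping, reading from \eqref{nosmall} that the oscillation count on $(w_i,w_{i+1})$ is $i$ in the first subcase and $i+\tfrac{1}{2}$ in the second, which gives respectively the integer set $\{l+1,\dots,l+k-1\}$ and the half‑integer set $\{l+\tfrac{3}{2},\dots,l+(k-1)+\tfrac{1}{2}\}$ after the stated relabeling of the $w_i$.

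The one genuinely new wrinkle, which I expect to be the main obstacle, is the finite‑intersection case when $w^*$ falls inside one of the admissible intervals, necessarily $I_{p_1}$. There $\Phi$ is not monotone — increasing on $(w_{p_1},w^*)$, decreasing on $(w^*,w_{p_1+1})$, with $\beta$ as both endpoint limits — so its image is only $(\beta,\Phi(w^*)]$, a proper subinterval of $(\beta,\alpha)$ since $\Phi(w^*)<\alpha$ by Theorem \ref{prop_phi} part 8. To close the recursion in this situation one must either restrict the whole construction to the smaller window $(\beta,\Phi(w^*))$, checking that the chain $(w_{l+1},w_{l+k})$ still fits inside it and working separately on the two monotone halves of $I_{p_1}$, or — in the spirit of the remark following Proposition \ref{Newallpatterns} — exclude from $\mathcal S$ the oscillation count carried by whichever half of $I_{p_1}$ has image failing to cover $(w_{l+1},w_{l+k})$. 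Apart from this edge case, the proof is a routine transcription of the earlier argument.
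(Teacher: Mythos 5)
Your proposal is correct and follows essentially the same route as the paper, which proves the corollary simply by observing that the recursive preimage construction of Proposition~\ref{Newallpatterns} applies verbatim once one knows that $\Phi$ maps each admissible interval $I_s$ onto $(\beta,\alpha)$ (monotonicity plus the one-sided limits $\alpha$, $\beta$ from Theorem~\ref{prop_phi}). The ``wrinkle'' you flag --- an admissible interval straddling $w^*$, whose image $(\beta,\Phi(w^*)]$ may fall short of $\alpha$ --- is not addressed in the paper's own argument, which implicitly reads the statement (in the spirit of the remark following Proposition~\ref{Newallpatterns}) as applying to intervals with full image $(\beta,\alpha)$, so your proposed restriction or exclusion of that interval is, if anything, more careful than the published proof.
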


This corollary covers all cases studied in this paper, including finite or infinite number of intersections of the stable manifold with the reset line. Only the number of these points in the  interval $[\beta,\alpha]$ determines the possible MMO patterns.

\section{Adaptation map with one discontinuity point in the invariant interval}\label{sec:OneDisc}
The general description developed above does not yield a precise specification of the dynamics of the system. For  clarity of exposition, from now on, we shall assume that $v_R<v_{-}$ and we focus chiefly on the case where the adaptation map has exactly one discontinuity point $w_1$ in the interval $[\beta,\alpha]$ (although there may be arbitrarily many outside of that interval).  One of the main limitations of this situation is that the resulting MMOs have at most one small oscillation between spikes. Nonetheless,  this case is advantageous in that the number of possible configurations of the map and identity line remains relatively limited, while there is a combinatorial explosion in cases with more intersections.  It will be clear that most of our techniques extend beyond these situations under suitable technical assumptions\footnote{In particular, the unique discontinuity point $w_1$ in $[\beta,\alpha]$ might be replaced by any $w_i$ (with respect to the notation introduced before  \eqref{nosmall}) such that $w_i<w^*$, and satisfying corresponding conditions  (C1)-(C3) below with $w_1$ and $w_2$ replaced, respectively, by $w_i$ and $w_{i+1}$.}. 

The shape of the map depends on the relation of certain points,  
as represented in Fig.~\ref{Partition_d_gamma}, and we list several relevant conditions that we will consider as we proceed:

\begin{description}
	\item[(C1)] There exists a unique discontinuity point $w_1$ in the interval $[\beta,\alpha]$:
	\begin{equation}\label{assump:I}
		\beta < w_1 < \alpha <w_2.
	\end{equation}
	\item[(C2)] The map is piecewise increasing on $[\beta,\alpha]$, i.e.
	\begin{equation}\label{assump:II}
		\alpha<w^*.
	\end{equation}
	\item[(C2')] Alternatively to \textbf{(C2)}, 
	\begin{equation}\label{assump:IIprime}
		w_1<w^*\leq \alpha.
	\end{equation}
	\item[(C3)]  The interval  $\I:=[\beta,\alpha]$ is invariant, which is set by the conditions:
	\begin{equation}\label{assump:III}
		\Phi(\beta)\geq \beta \ \textrm{and} \ \Phi(\alpha)\geq \beta.
	\end{equation}
\end{description}

\begin{figure}[htbp]
\centering
\includegraphics[width=0.92\textwidth]{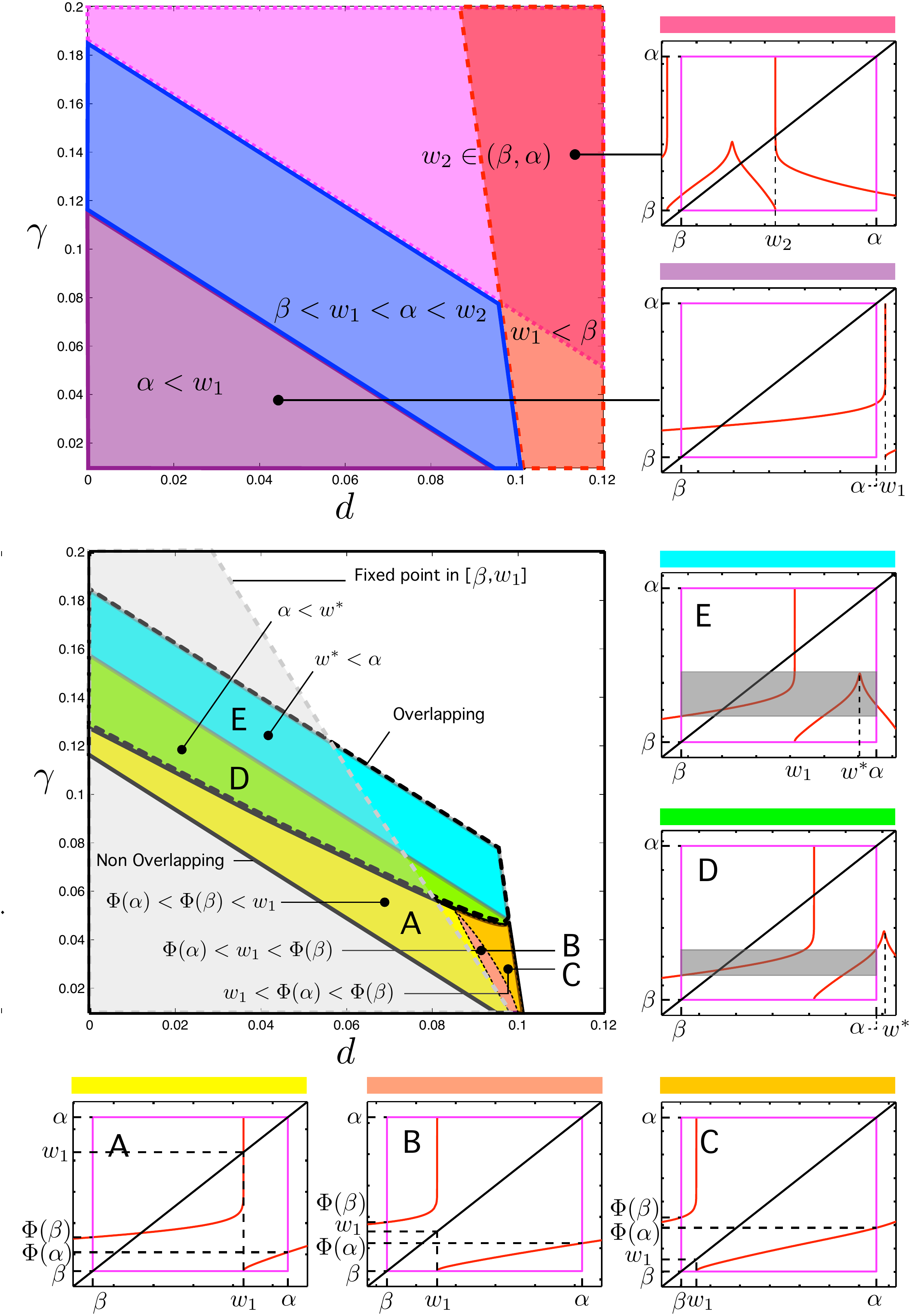}
\caption{Partitions of $(d,\gamma)$ parameter space (for fixed values of the other parameters) according to geometric properties of the map $\Phi$ for the quartic model ($F=v^4+2av$, $a=\eps=0.1$, $b=1$, $I=0.1175$ and $v_R=0.1158$) assuming only two intersections of the reset line with the stable manifold. The blue region in the top partition corresponds to the $(d,\gamma)$ values for which assumption \textbf{(C1)} is satisfied. The bottom partition of that region specifies the subcases of interest: the non-overlapping case (assumption \textbf{(C4)}) is decomposed into regions A (yellow), B (pink) and C (orange) according to the position of the discontinuity point $w_1$ with respect to $\Phi(\alpha)$ and $\Phi(\beta)$. The overlapping case (assumption \textbf{(C4')}) is decomposed into regions D (assumption \textbf{(C2)}) and E (assumption \textbf{(C2')}) according to the position of the critical point $w^*$. Note that, for the given value of $v_R$, in the range of $(d,\gamma)$ values, assumption \textbf{(C3)} is always satisfied. For each subregion, a prototypical scheme of adaptation map $\Phi$ is displayed. The grey regions in the map plots D and E highlight the overlap.}
\label{Partition_d_gamma}
\end{figure}

Non-transient regimes only depend on the properties of the map $\Phi$ in a bounded invariant interval. Indeed, we have seen in Theorem \ref{prop_phi} that $\Phi$ is bounded above and that for $w$ small enough $\Phi(w)>w$, implying the existence of an invariant compact set $\mathcal{I}$ in which any trajectory is trapped after a finite number of iterations. This remark opens the way to consider $\Phi$ as a circle map (after identifying the endpoints of $\mathcal{I}$) and thus to use rotation theory in order to rigorously discriminate (i) whether the firing is regular, bursting or chaotic, corresponding respectively to fixed points, periodic orbits, or chaotic (non-periodic) orbits of $\Phi$ (see~\cite{touboul-brette:09}), as well as (ii) the number of small oscillations occurring before a spike, according to the partition of Fig.~\ref{fig:Structure}, i.e., the signature of the MMO pattern fired.

\begin{definition}\label{def:Lift}
Under assumptions \textbf{(C1)} and \textbf{(C3)}, the invariant interval of $\Phi$ can be defined as $\mathcal{I}=[\beta,\alpha]$. The lift $\Psi$ of $\Phi\vert_{\mathcal{I}}$ is defined for $x\in (\beta,\alpha]$ as:
	\begin{equation}\label{eq:DefLift}
		\Psi:=x \mapsto
		\begin{cases}
			\Phi(x) & \text{if } \beta<x<w_1\\
			\alpha & \text{if } x=w_1\\
			\Phi(x)+(\alpha-\beta) & \text{if } w_1<x\leq \alpha.\\
		\end{cases}
	\end{equation}
	and extended on $\R$ through the relationship that for any $x\in \R$ and $k\in \mathbb{Z}$, 
	\[
	\Psi(x+k(\alpha-\beta))=\Psi(x)+k (\alpha-\beta).
	\]
	The rotation number of $\Psi$ at $w\in\mathbb{R}$ is defined as:
	\begin{equation}\label{def_rot}
		\varrho(\Psi,w):=\lim\limits_{n\to\infty}\frac{\Psi^n(w)-w}{n(\alpha-\beta)}
	\end{equation}
	provided that the limit exists.
\end{definition}

An example of the lift  is given in Fig.~\ref{fig:PsiLPsiR}. Note that the lift is continuous on the interior of the invariant interval $\mathcal{I}$. Indeed, it is continuous on $(\beta,w_{1})$ and $(w_{1},\alpha)$ since $\Phi$ is continuous therein, and at $x=w_{1}$, both its left limit $\Psi(w_{1}^{-})=\Phi(w_{1}^{-})$ and right limit $\Psi(w_{1}^{+})=\Phi(w_{1}^{+})+(\alpha-\beta)$ are equal to $\alpha$. However, the map $\Psi$ is generally not continuous on $\R$ and displays discontinuities at the points $x_k=\alpha+k (\alpha-\beta)$ for $k\in \mathbb{Z}$ when $\Phi(\beta) \neq \Phi(\alpha)$ (which is generally the case). By convention, the above definition introduces $\Psi$ as a left-continuous map. As will be emphasized at relevant places, this choice does not impact our developments, and in particular does not affect possible values of rotation numbers. We finally note that the maps $\Psi$ and $\Phi$ restricted to $[\beta,\alpha]$ induce the same circle map $\varphi:\mathbbm{S}^{\vert\I\vert}\to \mathbbm{S}^{\vert\I\vert}$ on the circle of length ${\vert\I\vert}=\alpha-\beta$, and the orbits of $\Psi$  coincide modulo $ \vert\I\vert$ with the orbits of $\Phi$, except at $w_{1}$ where the map $\Phi$ is not defined\footnote{{The results shown on the orbits of $\Psi$ correspond to actual spike pattern for any initial conditions outside the discrete set of pre-images of $w_{1}$, $\{\Phi^{-n}(w_1), \;n\in \mathbb{N}_0\}$ (where $\mathbb{N}_0:=\{0,1,2,...\}$).}}. Therefore $\Psi$ captures well the general dynamical properties of $\Phi$.

The sign of the jump of $\Psi$ at its discontinuity points $x_k$ will be particularly important in our developments. We will distinguish the following cases:
\begin{description}
	\item[(C4)] We say that the map is \emph{non-overlapping} if \textbf{(C1), (C2)} and \textbf{(C3)} are satisfied, and moreover:
	\begin{equation}\label{assump:IV}
		\Phi(\alpha)\leq \Phi(\beta).
	\end{equation}
\end{description}
 When the   inequality~\eqref{assump:IV} does not hold, we identify another case:
\begin{description}	
\item[(C4')] We say that the map is \emph{overlapping} if conditions \textbf{(C1), (C3)} and either \textbf{(C2)} or \textbf{(C2')} are satisfied and $\Psi$ has a negative jump:
	\begin{equation}\label{assump:IVb}
		\Phi(\alpha)> \Phi(\beta).
			\end{equation}
\end{description}	
	
The terminology follows~\cite{keener} and refers to the property that $\Phi$ is injective in $[\beta,\alpha]$ under assumption \textbf{(C4)}, while the images of $(\beta,w_{1})$ and $(w_{1},\alpha)$ under $\Phi$ have non-empty intersections (\emph{overlap}) under assumptions \textbf{(C4')}. We also emphasize that in the non-overlapping (resp., overlapping) case, the map $\Psi$ has non-negative (resp., negative) jumps at its discontinuity points $(x_k)_{k\in\mathbb{Z}}$, i.e., $\Psi(x_k^{-})\leq \Psi(x_k^+)$ (respectively, $\Psi(x_k^{-})> \Psi(x_k^+)$). 

These conditions may seem complex to check theoretically since they involve relative values for the adaptation map $\Phi$, the discontinuity points, and $\alpha$ and $\beta$. However, they are very easy to check numerically for a specific set of 
parameters. In Fig.~\ref{Partition_d_gamma} we illustrate these different situations for a  quartic model with a particular choice of the subthreshold parameters and for a fixed value of the reset voltage $v_R$, and we identify the regions with respect to the reset parameters $\gamma$ and $d$ where the above conditions are satisfied.

We will  provide an exhaustive description of the MMO patterns produced by the adaptation map when it  has exactly one  discontinuity in the interval $\I$. 
In our framework, we can classify MMOs with half-oscillation precision. However, in this section, we choose for the sake of simplicity in the formulation of the results to consider integer numbers of small oscillations; that is, the points in  $(\beta,w_1)$ correspond to no small oscillations whereas the points in $(w_1,\alpha)$ result in one small oscillation. Thus, referring to the signature of MMOs, we have $s_i=0$ or $s_i=1$ and by grouping together in the signature consecutive spikes followed by no small oscillations, we can assume that $s_i=1$ for any $i$.

We start with a simple remark stating, roughly speaking, that MMOs occur frequently in our system: 
\begin{proposition}\label{forMMBO} Under conditions \textbf{(C1)}, \textbf{(C3)} and either \textbf{(C2)} or \textbf{(C2')}, all orbits of the system \eqref{eq:SubthreshDyn}-\eqref{eq:new_reset} with initial conditions $w\in \mathcal{I}$, except for possible fixed points of $\Phi$ in $[\beta,w_1)$ or orbits attracted by such fixed points, display persistent MMOs. In particular, every periodic orbit of $\Phi$ in $\mathcal{I}$ with period $q>1$ corresponds to regular MMOs (i.e., MMOs with a periodic signature) of the system. 

Under \textbf{(C1)}, \textbf{(C2)} and \textbf{(C3)}, the MMOs displayed by periodic orbits of  $\Phi\vert_{\mathcal{I}}$   are MMBOs. 
\end{proposition}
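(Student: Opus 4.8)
The plan is to translate orbits of $\Phi|_{\mathcal I}$ into symbol sequences determined by the single discontinuity $w_1$, and then read off the MMO/MMBO properties combinatorially. Under \textbf{(C1)} together with \textbf{(C2)} or \textbf{(C2')} one has $w_1<w^*$, so by Theorem~\ref{prop_phi}(3)--(4) the map $\Phi$ is strictly increasing and continuous on $[\beta,w_1)$, with one-sided limits $\Phi(w_1^-)=\alpha$ and $\Phi(w_1^+)=\beta$. To an orbit $(w_n)_{n\ge 0}$ in $\mathcal I$ that is not a preimage of $w_1$ (those do not keep spiking) I attach the symbols $c_n:=s(w_n)\in\{0,1\}$, with $c_n=0$ when $w_n\in(\beta,w_1)$ and $c_n=1$ when $w_n\in(w_1,\alpha]$ (recall that, with the integer convention adopted in this section, points of $(\beta,w_1)$ produce no small oscillation and points of $(w_1,\alpha]$ produce one). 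Since $c_n$ counts the small oscillations occurring between the $n$-th and $(n{+}1)$-th spikes, two observations are immediate: (i) the orbit is a persistent MMO iff $c_n=1$ for infinitely many $n$, i.e.\ it visits $(w_1,\alpha]$ infinitely often; (ii) a maximal run of $\ell$ consecutive spikes corresponds to a maximal block of $\ell-1$ zeros in $(c_n)$, so the orbit is an MMBO iff $(c_n)$ attains both values $0$ and $1$, i.e.\ the orbit visits both $(\beta,w_1)$ and $(w_1,\alpha]$.

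For the first assertion I argue contrapositively. If $(w_n)$ is not a persistent MMO, then by (i) there is $N$ with $w_n\in[\beta,w_1)$ for all $n\ge N$; the tail is an orbit of the strictly increasing continuous map $\Phi|_{[\beta,w_1)}$, and $\Phi$ being order-preserving with the orbit confined to $[\beta,w_1)$, the tail is monotone, hence convergent to some $w_\infty\in[\beta,w_1]$. It cannot be that $w_\infty=w_1$, since then $w_{n+1}=\Phi(w_n)\to\Phi(w_1^-)=\alpha\neq w_1$, contradicting $w_{n+1}\to w_1$; so $w_\infty\in[\beta,w_1)$ and, by continuity, $\Phi(w_\infty)=w_\infty$. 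Hence the orbit is, or is attracted by, a fixed point of $\Phi$ in $[\beta,w_1)$ — the excepted case.

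For the second assertion, a period-$q$ orbit with $q>1$ cannot lie in $[\beta,w_1)$, because a finite invariant set of a strictly increasing map is pointwise fixed, which would force period $1$; so it meets $(w_1,\alpha]$, giving some $c_n=1$, and periodicity of $(c_n)$ then produces a periodic signature, i.e.\ a regular MMO. For the third assertion one adds \textbf{(C2)} ($\alpha<w^*$), under which $\Phi$ is also strictly increasing and continuous on $(w_1,\alpha]$; the same argument shows a period-$q$ orbit with $q>1$ cannot lie in $(w_1,\alpha]$ either, so it meets both intervals and, by (ii), is an MMBO. These are precisely the periodic orbits producing the regular MMOs of the second assertion; the only remaining periodic orbits are fixed points, those in $[\beta,w_1)$ being tonic spiking rather than MMOs and any lying in $(w_1,\alpha)$ producing the elementary signature $1^1$. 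Note that \textbf{(C2)} is essential here: under \textbf{(C2')} the map is decreasing on $(w^*,\alpha]$, so period-$2$ orbits confined to $(w_1,\alpha]$ are no longer excluded.

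All steps are elementary once the symbolic dictionary is set up; the one point needing care is the first assertion's claim that the monotone tail converges to a fixed point strictly inside $[\beta,w_1)$ rather than accumulating at $w_1$, which is exactly where the jump $\Phi(w_1^-)=\alpha\neq w_1$ is used. The conceptual heart is observation (ii): recognising a burst as a $0$-symbol (and a small oscillation as a $1$-symbol) makes all three assertions fall out of the monotonicity of the two branches of $\Phi$ on either side of $w_1$.
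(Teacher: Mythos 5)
Your proof is correct and follows essentially the same route as the paper: monotonicity of $\Phi$ on $[\beta,w_1)$ together with the one-sided limits $\Phi(w_1^-)=\alpha$, $\Phi(w_1^+)=\beta$ forces recurrent visits to $(w_1,\alpha]$ (hence persistent MMOs away from fixed points in $[\beta,w_1)$ and their basins), and under \textbf{(C2)} the fact that an increasing map admits no non-trivial periodic orbit within either branch forces non-trivial periodic orbits to straddle $w_1$, yielding MMBOs. Your symbolic coding and the explicit monotone-tail-converges-to-a-fixed-point argument merely flesh out what the paper dismisses as ``easy to see,'' so the content is the same.
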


\begin{proof}
	The first two statements about MMOs follow  from the monotonicity of $\Phi$ in $[\beta,w_1)$ and its limits at $w_1$; indeed, it is easy to see that because of these properties the considered orbits recurrently visit the set $(w_1,\alpha]$, whereas any point of the orbit in this set undergoes one small oscillation before firing a spike. Hence persistent MMOs result, which are regular if these orbits are periodic. 
	
	Similarly under all  assumptions \textbf{(C1)}, \textbf{(C2)} and \textbf{(C3)}, $\Phi$ is monotone increasing on $(w_1,\alpha]$ in addition to $[\beta,w_1)$, with $\Phi(w_1^-)>w_1>\Phi(w_1^+)$. Hence, if an orbit of  $\Phi\vert_{\mathcal{I}}$ is not trapped by a fixed point in one of these intervals, then it necessarily escapes to the other. In particular, any non-trivial periodic orbit thus features small oscillations as well as consecutive spikes with no small oscillations in between, leading to MMBOs.
\end{proof}

Note that under \textbf{(C1)}, \textbf{(C2')}, \textbf{(C3)} it is possible that there are periodic orbits fully contained in $(w_1,\alpha]$. Such periodic orbits always display one small oscillation before each spike. Hence these are MMOs but not MMBOs. 

When conditions \textbf{(C1)}, \textbf{(C2)} and \textbf{(C3)} are satisfied, the singular case $\Phi(\beta)=\Phi(\alpha)$ can be treated using the classical Poincar\'e theory of orientation preserving circle homeomorphisms. In all other cases the corresponding lift is discontinuous and possibly non-monotonic. Our study will build  upon previous works of Keener \cite{keener}, Misiurewicz \cite{misiu}, Rhodes and Thompson \cite{frrhodes,frrhodes2} and  Brette \cite{brette}. We link their general results to MMOs in our system, as well as extend and strengthen some of them to more specific subcases arising in our study, allowing for more refined characterization of the dynamics of $\Phi$.

\subsection{Non-overlapping case}\label{nonoversec}
We start by investigating the non-overlapping case \textbf{(C4)}. In that situation, the lift $\Psi$ is discontinuous (unless $\Phi(\beta)=\Phi(\alpha)$) but conserves the orientation-preserving property since it only admits positive jumps. It is well-known that monotone circle maps conserve the properties of continuous orientation-preserving maps: they have a unique rotation number, and rational rotation numbers imply asymptotically periodic behaviors.

To ensure convergence towards periodic orbits, one needs to take special care about the presence of discontinuities. Indeed, when $\Phi$ has a periodic orbit with period $q$, then necessarily there exists $x_0\in\mathbb{R}$ such that $\Psi^q(x_0)=x_0+p(\alpha-\beta)$ for some $p\in \mathbb{N}^*$, $p, \ q$ relatively prime, i.e. $x_0$ is periodic mod$(\alpha-\beta)$ for the lift $\Psi$. However, since map $\Phi$ is discontinuous at $w_1$, it might happen that, although the rotation number is rational, no truly periodic orbit of $\Phi$ exists but point $w_1$ acts as a periodic point. This means that one of the two following properties is necessarily fulfilled, with $x_0 \; \mbox{mod} \; \vert \I\vert=w_1$ (see \cite{frrhodes}):
\begin{itemize}
\item for all $t\in \mathbb{R}$, $\Psi^q(t)>t+p\vert \I\vert$ and
\begin{equation}
\exists x_0\in\mathbb{R}, \quad \lim\limits_{t\to x_0^-}\Psi^{q}(t)=x_0+p\vert \I\vert;
\end{equation}
\item for all $t\in \mathbb{R}$, $\Psi^q(t)<t+p\vert \I\vert$ and
\begin{equation}
\exists x_0\in\mathbb{R}, \quad \lim\limits_{t\to x_0^+}\Psi^{q}(t)=x_0+p\vert \I\vert.
\end{equation}
\end{itemize}

\begin{remark}
	By allowing the lift to be bivalued at the discontinuity points, Brette~\cite{brette} and Granados \emph{et al}~\cite{alseda} avoid the distinction of the three cases for rational rotation numbers (i.e., the existence of the actual periodic orbit and the two cases listed above). That formalism indeed ensures that the periodic orbit always exists, since the two situations above can happen only if $x_0 \; \mbox{mod} \; \vert \I\vert=w_1$, i.e. when the periodic orbit bifurcates.
\end{remark}

For simplicity, with a little abuse of terminology, in both above cases, we will refer to the orbit of $x_0 \; (\mbox{mod} \vert \I\vert)$ under $\Phi$ as the periodic orbit. Bearing that in mind we now relate the orbits of $\Phi$  to the dynamics of the neuron model and show that the rotation number in the non-overlapping case fully characterizes the signature of the resulting MMO.

\begin{theorem}\label{dynamics2} We assume that the adaptation map $\Phi$ satisfies condition \textbf{(C4)} and consider its lift $\Psi:\R \to \R$. Then the rotation number $\varrho:=\varrho(\Psi,w)$ of $\Psi$ exists and does not depend on $w \in \R$.
	
	Moreover, the rotation number is rational, $\varrho=p/q\in\mathbb{Q}$ with $p\in \N_0:=\{0,1,2,...\}$ and $q\in\N^*$ relatively prime, if and only if $\Phi$ has a periodic orbit, which is related to the MMO pattern fired in the following way:
\begin{enumerate}
\item[(i)] If $\varrho=0$, then the model generates tonic asymptotically regular spiking for every initial condition $w_0\in \lbrack\beta,\alpha\rbrack\setminus\{w_1\}$ (see Figure \ref{RegSpike_MMBO}, top).
\item[(ii)] If $\varrho=1$, then the model generates asymptotically regular MMOs for every initial condition $w_0\in \lbrack\beta,\alpha\rbrack\setminus\{w_1\}$, with  periodic signature $1^11^11^1...=(1^1).$
\item[(iii)] If $\varrho=p/q\in\mathbb{Q}\setminus\mathbb{Z}$ ($p, q$ relatively prime, $q>1$ and $1\leq p<q$), then the model generates asymptotically regular MMBOs for every initial condition $w_0\in \lbrack\beta,\alpha\rbrack\setminus\{w_1\}$ (see e.g., Figure \ref{RegSpike_MMBO}, bottom). Defining $0< l_1<\cdots <l_p\leq q-1$ as the unique integers such that $l_{i}p/q \mod 1 \geq (q-p)/q$ and $\mathcal{L}_i=l_{i+1}-l_i$ for $i=1\cdots p$ (with the convention $l_{p+1}=q+1$), the MMBO signature is $\mathcal{L}_1^1\cdots \mathcal{L}_p^1$.
\item[(iv)] If $\varrho\in\mathbb{R}\setminus\mathbb{Q}$, then there are no fixed point and no periodic orbit, and the system fires chaotic MMOs.
\end{enumerate}
\end{theorem}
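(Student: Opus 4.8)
The plan is to reduce everything to the classical theory of monotone (orientation-preserving) circle maps, treating the discontinuity of $\Psi$ at the points $x_k=\alpha+k|\I|$ with care. First I would verify that under \textbf{(C4)} the lift $\Psi$ is nondecreasing on $\R$: on each fundamental domain it is increasing on $(\beta,w_1)$ and on $(w_1,\alpha]$ by point 3 of Theorem \ref{prop_phi} together with \textbf{(C2)} (which is implied by \textbf{(C4)} via \textbf{(C1)}), it is continuous at $w_1$ with common value $\alpha$ (as already noted after Definition \ref{def:Lift}), and at the copies of $\alpha$ the jump is $\Psi(x_k^+)-\Psi(x_k^-)=\Phi(\beta)-\Phi(\alpha)+|\I|\ge 0$ precisely by \eqref{assump:IV}. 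A nondecreasing lift commuting with translation by $|\I|$ has a well-defined rotation number independent of the starting point: this is the standard Poincaré argument (subadditivity of $n\mapsto \Psi^n(w)-w$ up to a bounded error, plus monotonicity to remove $w$-dependence), which applies verbatim to monotone maps with jumps. This gives the first assertion.

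Next I would treat the dichotomy rational/irrational. If $\varrho=p/q$ is irrational, a periodic point would force $\varrho$ rational, so no periodic orbit exists; one then invokes the semi-conjugacy of a monotone degree-one circle map to the rigid rotation $R_\varrho$ (again Poincaré, valid for monotone maps with discontinuities, cf.\ \cite{frrhodes,brette}) to conclude that every orbit is non-periodic with the order structure of an irrational rotation, and then reads off from the partition $(\beta,w_1)\cup\{w_1\}\cup(w_1,\alpha]$ that the itinerary of any orbit is a non-eventually-periodic Sturmian-type sequence of $0$'s and $1$'s --- hence chaotic (aperiodic) MMOs in the sense meant here, giving (iv). If $\varrho=p/q$ is rational with $q,p$ coprime, monotonicity gives the usual alternative: either $\Psi^q(t)=t+p|\I|$ for some $t$ (a genuine periodic orbit of $\Phi$), or one of the two boundary situations with $x_0\bmod|\I|=w_1$ listed before the theorem statement occurs; in all three cases there is a point whose $\Phi$-orbit is periodic mod $|\I|$ with period $q$, and every other orbit is forward-asymptotic to it, again by monotonicity (orbits cannot cross the periodic orbit and are squeezed onto it).

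The substantive combinatorial content, and the part I expect to require the most care, is the signature computation in (iii) (with (i) and (ii) as the degenerate cases $p=0$ and $p=q=1$). Here the point is that the circle map $\varphi$ on $\mathbbm{S}^{|\I|}$ is semiconjugate to the rigid rotation by $p/q$, and the partition into ``no small oscillation'' (image of $(\beta,w_1)$, an arc of normalized length $(w_1-\beta)/|\I|$) and ``one small oscillation'' (image of $(w_1,\alpha]$) can be chosen, up to the semiconjugacy, to match the partition of the unit circle into an arc of length $(q-p)/q$ and its complement of length $p/q$; this is where one uses that the endpoints $\beta,\alpha$ are the two one-sided limits at the discontinuity and that on a period-$q$ orbit the relevant gaps have normalized length exactly $1/q$. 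Then a spike is emitted at each of the $q$ iterates, and a small oscillation precedes exactly those iterates whose rotation-coordinate $l_i p/q \bmod 1$ lands in the length-$p/q$ arc $[(q-p)/q,1)$; ordering these landing times as $0<l_1<\cdots<l_p\le q-1$ and setting $\mathcal L_i=l_{i+1}-l_i$ (with $l_{p+1}=q+1$) counts precisely how many consecutive spikes occur between successive small oscillations, i.e.\ the burst lengths, yielding the signature $\mathcal L_1^1\cdots\mathcal L_p^1$. The only delicate points are checking that the choice of left-continuous convention at $w_1$ and at the $x_k$ does not alter which iterates carry a small oscillation (it changes at most the itinerary of the measure-zero set of preimages of $w_1$, as already flagged in the footnote to Definition \ref{def:Lift}), and confirming the three-case analysis of the rational case does not create spurious extra small oscillations --- both of which follow from monotonicity and the explicit limits in point 4 of Theorem \ref{prop_phi}.
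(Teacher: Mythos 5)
Your proposal is correct and follows essentially the same route as the paper: both reduce the problem to rotation theory for strictly increasing (orientation-preserving) lifts with jumps in the spirit of Rhodes--Thompson and Brette (existence and uniqueness of $\varrho$, the rational case with the three-fold alternative at $w_1$ and convergence of all orbits to periodic ones, the irrational case giving aperiodic MMOs), and both obtain the signature in (iii) from the fact that the $q$-periodic orbit is order-isomorphic to the rigid rotation by $p/q$ with exactly $p$ points in $(w_1,\alpha]$, i.e.\ the indices $l$ with $lp/q \bmod 1 \geq (q-p)/q$. The only blemishes are cosmetic: the jump of $\Psi$ at $x_k$ equals $\Phi(\beta)-\Phi(\alpha)$ (not $\Phi(\beta)-\Phi(\alpha)+\vert\I\vert$), and in the rational case orbits converge to \emph{some} periodic orbit, not necessarily a unique one (this is what the paper extracts from Brette's Proposition~5), which suffices since all periodic orbits share the same rotation number, ordering and hence signature.
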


\begin{figure}[htbp]
\centering
\includegraphics[width=0.8\textwidth]{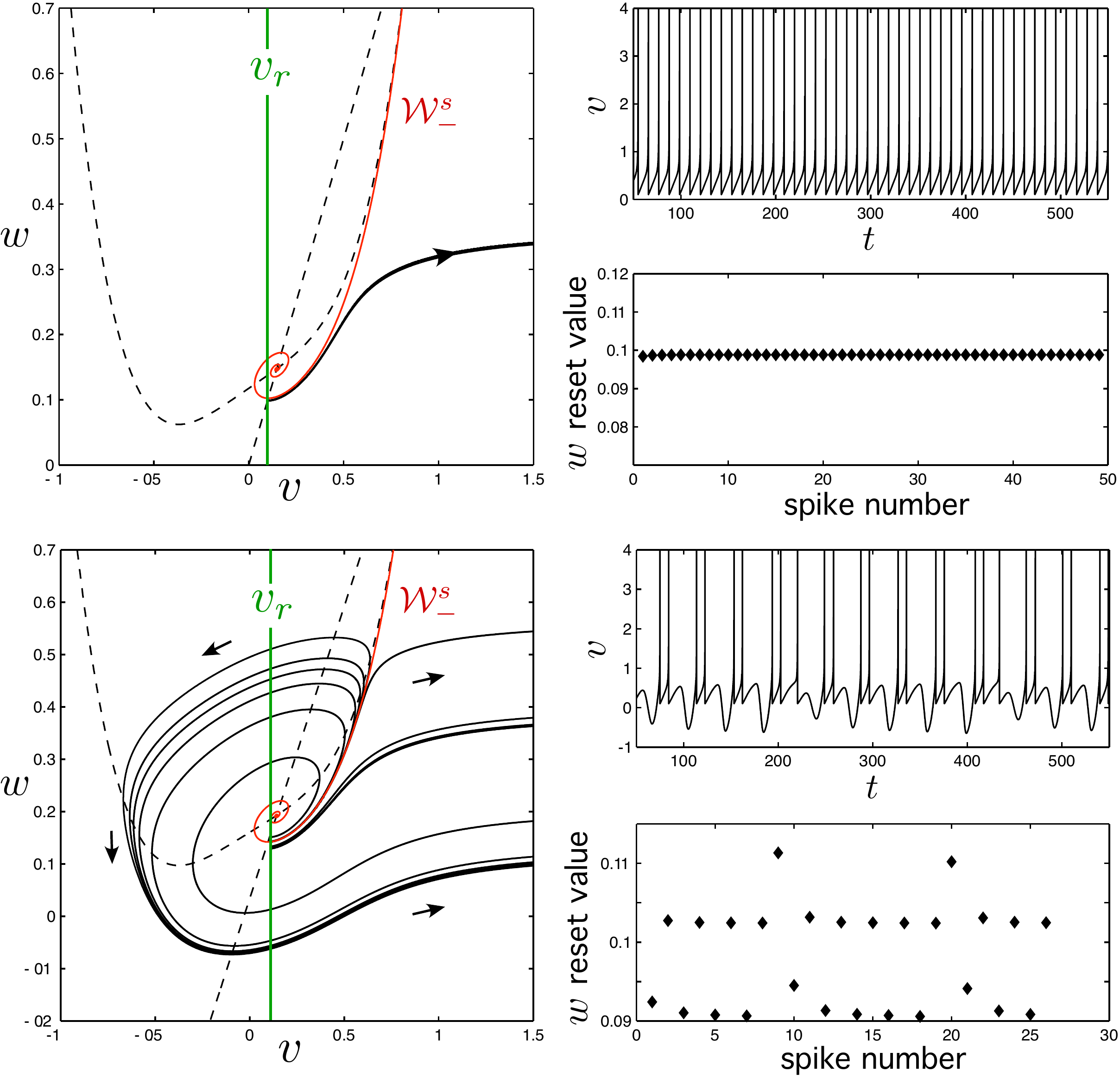}
\caption{Phase plane structure, $v$ signal generated along attractive periodic orbits and sequence of $w$ reset values for two sets of parameter values for which the map $\Phi$ is in the non-overlapping case \textbf{(C4)}. In both cases, $v_R=0.1$ and $\gamma=0.05$. The top case ($d=0.08$) illustrates the regular spiking behavior corresponding to the rotation number $\varrho = 0$. The bottom case ($d=0.08657$) displays a complex MMBO periodic orbit with associated rational rotation number.}
\label{RegSpike_MMBO}
\end{figure}

\begin{remark}
This result establishes that in the non-overlapping case, the MMO signatures of orbits are determined by the rotation number and provides a constructive algorithm to compute the MMO signature associated to a given rotation number. We illustrate this construction on two examples:
	\begin{itemize}
		\item Orbits of the adaptation map with rotation number $\varrho=1/q$ have signature $q^1$. When $\varrho=(q-1)/q$ the signature is $2^11^1\cdots 1^1$ (with $q-2$ repetitions of the pattern $1^1$).
		\item For the rotation number $\varrho=3/5$, up to cyclic ordering, the periodic orbits are ordered as those of the corresponding rotation by $3/5$ on the unit circle, i.e. $\{0, \frac{3}{5}, \frac{2\cdot3}{5}= \frac{1}{5} \mod 1, \frac{3\cdot3}{5}= \frac{4}{5}\mod 1, \frac{4\cdot3}{5}= \frac{2}{5} \mod 1\}$. The three indices corresponding to values greater  or equal to $2/5$ are $\{1,3,4\}$; hence, $\mathcal{L}_1=3-1=2$, $\mathcal{L}_2=4-3=1$, $\mathcal{L}_3=1+5-4=2$, and the signature is $2^11^12^1$.
		
	\end{itemize}
\end{remark}

\begin{proof} Since the induced lift $\Psi:\mathbb{R}\to\mathbb{R}$ is strictly increasing, we can apply the theory of monotone circle maps theory developed by Rhodes and Thompson~\cite{frrhodes,frrhodes2} and Brette~\cite{brette}. The existence and uniqueness of the rotation number is shown in~\cite[Theorem 1]{frrhodes} and~\cite{brette}, and the proof for orientation preserving homeomorphisms applies\footnote{Continuity of the lift is not used in the classical proof of the uniqueness of the rotation number for orientation preserving circle homeomorphisms, see e.g.~\cite[Proposition 11.1.1]{katok}.}. The characterization of the orbits in the case of rational rotation numbers results from~\cite[Theorem 2]{frrhodes} and the fact that $\Psi$ is strictly increasing.

Moreover, if $\varrho=p/q$, then it can be shown that every non-periodic point $w\in [\beta,\alpha]$ of $\Phi$ tends under $\Phi^q$ to some periodic point $\tilde{w}\in [\beta,\alpha]$: $\lim_{n\to\infty}\Phi^{nq}(w)=\tilde{w}$. This is a consequence of~\cite[Proposition 5]{brette} since the monotonicity of $\Psi$ ensures that the underlying circle map is strictly orientation preserving. From the proof therein it also follows that the asymptotic behavior is consistent for all the points of a given orbit, i.e. that if $w$ tends under $\Phi^q$ to $\tilde{w}$, then $\Phi^k(w)$, $k=0,1,..., q-1$, tends to its corresponding point $\Phi^k(\tilde{w})$ on the periodic orbit of $\tilde{w}$. This provides the classification of orbits for the adaptation map, analogous to the one for a circle homeomorphism with rational rotation number (cf.~\cite[Proposition 11.2.2]{katok}).  Next, we consider the subcases of firing patterns.

	{\it (i-ii)} When $\varrho(\Psi)=0 \mod 1$, the adaptation map admits a fixed point. Moreover, under the current assumptions and the way we have defined the lift $\Psi$ we either have $\varrho(\Psi)=0$ if the fixed point belongs to $(\beta,w_1)$, in which case there is no (full) small oscillation between spikes, or $\varrho(\Psi)=1$ if the fixed point belongs to $(w_1,\alpha)$, in which case the orbit displays one small oscillation between every two consecutive spikes.

	{\it (iii)} As mentioned in Proposition~\ref{forMMBO}, periodic orbits necessarily correspond to MMBO. Moreover, it is not hard to show that $q$-periodic orbits with rotation numbers $p/q$ have exactly $p$ points to the right of $w_1$. These points split the periodic orbit into firing events consisting of either one spike or a burst, separated by a small oscillation. Since the lift preserves the orientation, the consecutive points of a periodic orbit $\{\bar{w}, \Phi(\bar{w}),\ldots, \Phi^{q-1}(\bar{w})\}$ with rotation number $p/q$ are ordered as the sequence of numbers $(0, p/q, 2p/q, ..., (q-1) /q)$ in $[0,1]$  (up to  cyclic permutation, see e.g.~\cite[Proposition 11.2.1]{katok}). The signature of the MMBO is directly related to the indexes $l\in \{0,1,...,q-1\}$ such that $\Phi^l(\bar{w})>w_1$, and hence such that $l p/q \geq (q-p)/q\mod 1$. We easily conclude that the signature of the MMBO indeed is $\mathcal{L}_1^1\mathcal{L}_2^1\cdots \mathcal{L}_p^1$.
	
	{\it (iv)} If the  rotation number is irrational, then $\Phi$ admits no periodic orbit, and all orbits under $\Phi$ have the same limit set $\Omega$, which is either the circle or a Cantor-type set as in the continuous case ($\Phi(\beta)=\Phi(\alpha)$), as proved in \cite[Proposition 6]{brette}.
\end{proof}

When $w_1$ is periodic mod $(\alpha-\beta)$, the corresponding forward attracting periodic orbit is unique. Otherwise, several attracting periodic orbits may exist with the same rational rotation number, and hence with the same period and the same ordering.  In \cite{alseda}, the authors have proved the uniqueness of the periodic orbit of maps such as $\Phi$ in the non-overlapping case with the assumption that $\Phi$ is contractive on both $(\beta,w_1)$ and $(w_1,\alpha)$. Here, because of the divergence of the differential at the discontinuity points, we cannot use the contraction assumption.

We emphasize that since $\Psi$ is a strictly increasing lift of a degree-one circle map, changing its value at a discontinuity point (while conserving monotonicity) does not change the value of the rotation number (see e.g.,~\cite{frrhodes}). The above remark means that for the characterization of the dynamics of $\Phi$, it does not matter whether we define the lift $\Psi$ to be left- or right-continous at its discontinuity points $\beta+k(\alpha-\beta)$, nor that  $\Phi$ is formally not defined at $w_1$, since $\lim\limits_{w\to w_1^-}\Phi(w)=\alpha$ and $\lim\limits_{w\to w_1^+}\Phi(w)=\beta$.

We now provide a simple sufficient condition for the existence of $2^1$ MMBOs.
This result is analogous to~\cite[Lemma 3.2]{keener} but does not necessitate the boundedness assumption on the derivative of the map made in \cite{keener}, which  our map $\Phi$ obviously does not satisfy.

\begin{proposition}\label{for2per}
Assume that $\Phi$ fulfills condition \textbf{(C4)}  and moreover that $\Phi(\alpha)<w_1<\Phi(\beta)$. Then $\Phi$ admits a periodic orbit of period 2, thus the system has a $2^1$ MMBO.
\end{proposition}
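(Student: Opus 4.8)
The plan is to read off from Theorem~\ref{prop_phi}, under condition \textbf{(C4)}, the structure of $\Phi$ on the invariant interval $\I=[\beta,\alpha]$: it is $C^3$ and strictly increasing on each of $(\beta,w_1)$ and $(w_1,\alpha)$, with one-sided limits $\lim_{w\to w_1^-}\Phi(w)=\alpha$ and $\lim_{w\to w_1^+}\Phi(w)=\beta$. Combined with the extra hypothesis $\Phi(\alpha)<w_1<\Phi(\beta)$, this shows that $\Phi$ \emph{interchanges} the two pieces: since $\Phi$ is increasing on $[\beta,w_1)$ with $\Phi(\beta)>w_1$ and supremum $\alpha$ there, $\Phi([\beta,w_1))\subset(w_1,\alpha)$; symmetrically $\Phi((w_1,\alpha])\subset[\beta,w_1)$ because $\Phi(\alpha)<w_1$ and the infimum of $\Phi$ on $(w_1,\alpha]$ equals $\beta$. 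Two facts I would record at once: (a) $\Phi$ has no fixed point in $\I$, as $\Phi(x)\ge\Phi(\beta)>w_1>x$ for $x\in[\beta,w_1)$ and $\Phi(x)\le\Phi(\alpha)<w_1<x$ for $x\in(w_1,\alpha]$ (and $\Phi$ is undefined at $w_1$); (b) because the image of $[\beta,w_1)$ never meets the singularity $w_1$, the map $\Phi^2$ is well defined, $C^3$ and strictly increasing on the whole half-open interval $[\beta,w_1)$.

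I would then produce a point of period $2$ by applying the intermediate value theorem to $g:=\Phi^2-\mathrm{id}$ on $[\beta,w_1)$. At the left endpoint, $\Phi(\beta)\in(w_1,\alpha)$, and since $\Phi$ is increasing on $(w_1,\alpha)$ with right limit $\beta$ at $w_1$, one has $\Phi^2(\beta)=\Phi(\Phi(\beta))>\beta$, hence $g(\beta)>0$. As $x\to w_1^-$ we have $\Phi(x)\to\alpha$, and by continuity of $\Phi$ at $\alpha\in(w_1,w_2)$, $\Phi^2(x)\to\Phi(\alpha)$; therefore $g(x)\to\Phi(\alpha)-w_1<0$ by hypothesis. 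Continuity of $g$ on $[\beta,w_1)$ then gives $x^*\in(\beta,w_1)$ with $\Phi^2(x^*)=x^*$. By (a), $x^*$ is not a fixed point of $\Phi$, so its $\Phi$-orbit $\{x^*,\Phi(x^*)\}$ has exact period $2$; moreover $x^*\in(\beta,w_1)$ and, by the first paragraph, $\Phi(x^*)\in(w_1,\alpha)$, so the orbit straddles the discontinuity point $w_1$ (and in particular neither of its points is $w_1$, so it is a genuine periodic orbit, not a degenerate one).

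It remains to translate this periodic orbit of $\Phi$ into a spike pattern. Using \textbf{(C2)} ($\alpha<w^*$), the point $\Phi(x^*)$ lies in $(w_1,\alpha)\subset(w_1,w^*)$, so by the partition~\eqref{nosmall} (equivalently, by Proposition~\ref{forMMBO}) the reset value $x^*<w_1$ leads to a spike with no small oscillation before it, while the reset value $\Phi(x^*)$ leads to exactly one small oscillation before the next spike; grouping the two consecutive spikes not separated by a small oscillation, the periodic signature is $2^1$, i.e.\ the system displays a $2^1$ MMBO. The only (mild) obstacle is the discontinuity of $\Phi$, hence of $\Phi^2$, at $w_1$: it forces the IVT to be carried out on the half-open piece $[\beta,w_1)$ rather than on $[\beta,\alpha]$, and it is exactly the strict inequality $\Phi(\beta)>w_1$ that keeps first iterates away from the singularity and makes $\Phi^2$ continuous there. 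This is what replaces the bounded-derivative hypothesis of~\cite[Lemma~3.2]{keener}, which our map $\Phi$ violates since $\Phi'$ diverges at $w_1$ (Theorem~\ref{prop_phi}, item~5).
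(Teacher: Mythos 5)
Your proposal is correct and follows essentially the same route as the paper: both apply the intermediate value theorem to $\Phi^2$ on $(\beta,w_1)$, using $\Phi^2(\beta)>\beta$ and $\lim_{w\to w_1^-}\Phi^2(w)=\Phi(\alpha)<w_1$ to obtain a fixed point of $\Phi^2$, whose second iterate lies in $(w_1,\alpha)$, yielding the $2^1$ MMBO. You simply spell out more explicitly the interchange of the two branches, the absence of fixed points, and the translation of the orbit into the signature, which the paper leaves implicit.
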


\begin{figure}[htbp]
\centering
\includegraphics[width=0.6\textwidth]{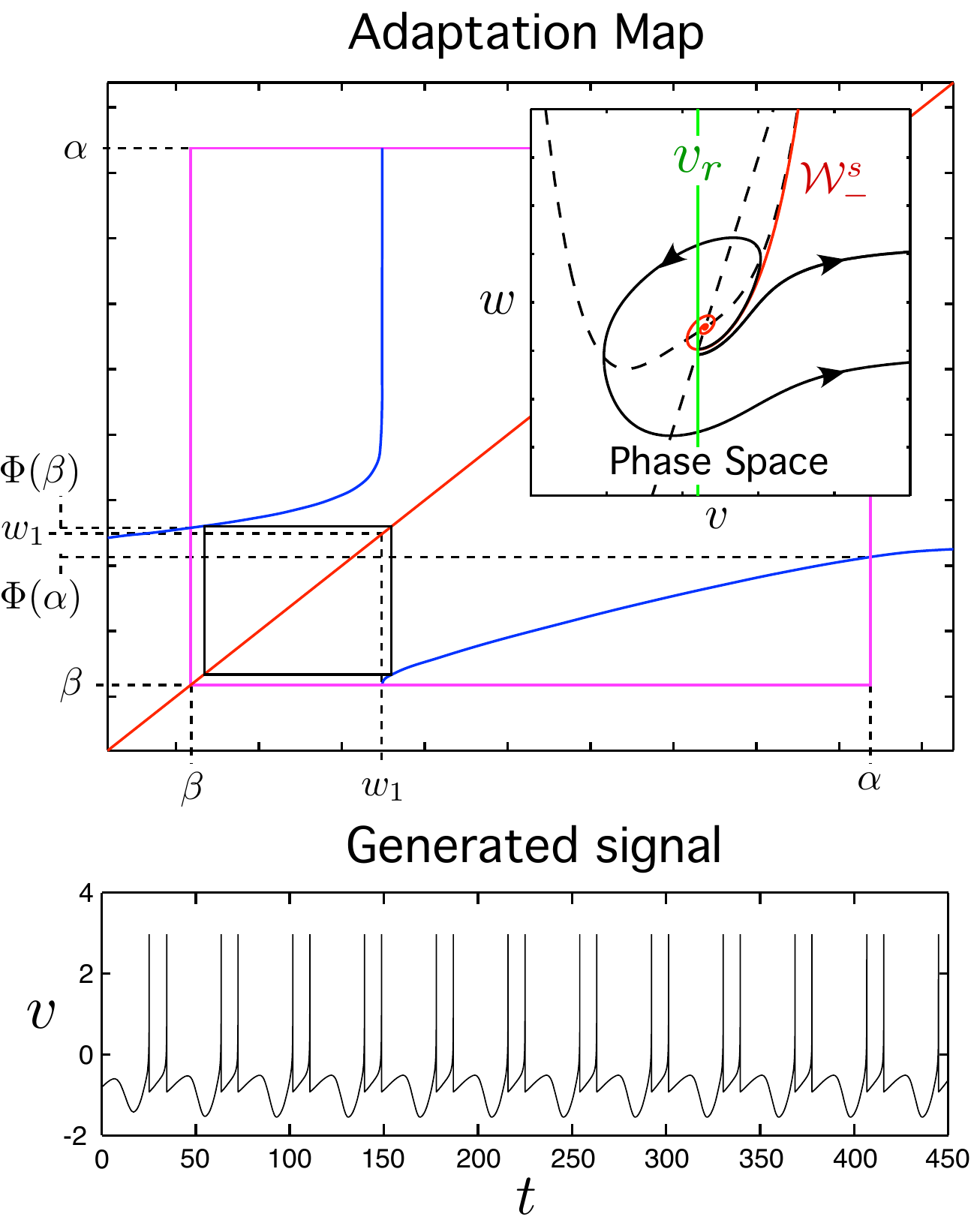}
\caption{Phase plane (inset) and adaptation map (top) fulfilling condition \textbf{(C4)} and the additional condition $\Phi(\alpha)<w_1<\Phi(\beta)$, along with the associated MMBO orbit of system \eqref{eq:SubthreshDyn} (bottom). The rotation number is equal to $0.5$, hence the $v$ signal along the orbit is a periodic alternation of a pair of spikes and one small oscillation. The parameter values of the system  corresponding to this simulation are $v_R=0.1$, $\gamma=0.05$ and $d=0.087$.}
\label{RhoHalf}
\end{figure}

\begin{proof}
	In this case, $\Phi^2((\beta,w_1))\subset (\beta,w_1)$, $\Phi^2$ is continuous on $(\beta,w_1)\cup (w_1,\alpha)$, $\Phi^2(\beta)>\beta$ and $\lim\limits_{w\to w_1^-}\Phi^2(w)=\Phi(\alpha)<w_1$. Hence, $\Phi^2$ admits a fixed point in $(\beta,w_1)$ corresponding to a periodic point of period 2 for $\Phi$. On the other hand, the second point of this periodic orbit lies in $(w_1,\alpha)$ since $\Phi((\beta,w_1))\subset (w_1,\alpha)$. Thus this orbit exhibits MMBO and necessarily $\varrho(\Psi)=1/2$. We illustrate this result in Figure \ref{RhoHalf}.
\end{proof}

	 When the second assumption of Proposition~\ref{for2per} is not valid and $\Phi(\beta)>\Phi(\alpha)>w_1$, the dynamics may generate complex orbits of higher period or even chaos. Different MMBO patterns may therefore be observed in the non-overlapping case,  depending sensitively  on the parameters. We now focus on this dependence on the reset parameters $(d,\gamma)$ and show that the rotation number varies as a devil's staircase (in the sense of Theorem \ref{new_devil} below). This result is based on a theorem in \cite{brette}. However it does not follow from \cite{brette} immediately, since varying reset parameters changes the invariant interval $[\beta,\alpha]$ and one needs to add some technical assumptions to ensure that the lifts display an increasing relation. The detailed proof can be found in the Appendix.
\begin{theorem}\label{new_devil}
Assume that for any $d\in [d_1,d_2]$, the adaptation map $\Phi_d$ remains in the non-overlapping case \textbf{(C4)} and $\Phi_d(\alpha_{d_2})<\Phi_d(\beta_{d_1})$. Let $\varrho_d$ be the unique rotation number of $\Phi_d$. Then:
\begin{itemize}
\item $\rho: d\mapsto\varrho_d$ is continuous and non-decreasing on $[d_1,d_2]$; 
\item for all $p/q\in\mathbb{Q}\cap \textrm{image}(\rho)$, $\rho^{ - 1}(p/q)$ is an interval containing more than one point   except, possibly, at the boundaries of the interval $[d_1,d_2]$;
\item for every irrational $\varrho_d\in \textrm{image}(\rho)$, $\rho^{ - 1}(\varrho_d)$ is a one-point set;
\item the set of points $d$ at which $\rho$ takes irrational values is, up to a countable number of points, a Cantor-type subset of $\lbrack d_1,d_2\rbrack $. 
\end{itemize}
\end{theorem}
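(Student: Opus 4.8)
The plan is to transport the whole one-parameter family $(\Phi_d)_{d\in[d_1,d_2]}$ onto a single fixed circle, show that the induced lifts form a monotone family in $d$, and then apply the devil's-staircase theorem of~\cite{brette} for families of monotone (possibly discontinuous) circle maps, classical Poincar\'e theory handling the degenerate homeomorphism subcase. The pivotal structural fact is that the invariant interval has $d$-\emph{independent} length, $\vert\I_d\vert=\alpha_d-\beta_d=\gamma(w_{\lim}^{+}-w_{\lim}^{-})=:L$ (only $d$ varies here, $\gamma$ is fixed). Writing $\psi(w):=\Phi_d(w)-\beta_d=\gamma\bigl(\varphi(w)-w_{\lim}^{-}\bigr)$, the function $\psi$ does not depend on $d$; since \textbf{(C1)} holds throughout $[d_1,d_2]$, each $[\beta_d,\alpha_d]$ contains $w_1$, so $\bigcup_d[\beta_d,\alpha_d]=[\beta_{d_1},\alpha_{d_2}]$ has $w_1$ as its only discontinuity of $\varphi$, while \textbf{(C2)} at $d_2$ gives $\alpha_{d_2}<w^{*}$; hence, by Theorem~\ref{prop_phi}, $\psi$ is strictly increasing on $(\beta_{d_1},w_1)$ and on $(w_1,\alpha_{d_2})$, with $\psi(w_1^{-})=L$, $\psi(w_1^{+})=0$, and $0\le\psi\le L$ there (the bounds using \textbf{(C3)} and \textbf{(C4)}). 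I would then work with the recentred lift $\tilde\Psi_d(x):=\Psi_d(x+\beta_d)-\beta_d$, which is a lift of a degree-one circle map on $\R/L\Z$ with rotation number $L\varrho_d$, equals $\psi(x+\beta_d)+L\,\mathbbm{1}_{\{x+\beta_d>w_1\}}$ on $(0,L]$, is continuous on the interior of the invariant interval, and has a jump only at the points of $L\Z$, of size $\Phi_d(\beta_d)-\Phi_d(\alpha_d)=\psi(\beta_d)-\psi(\alpha_d)\ge0$ by \textbf{(C4)}. Because this jump location is \emph{fixed} while its size and the map values depend continuously on $d$, the assignment $d\mapsto\tilde\Psi_d$ is continuous in the uniform norm, hence $d\mapsto\varrho_d$ is continuous; the first bullet will follow once monotonicity is established.

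The core step is the \emph{increasing relation}: $\tilde\Psi_d\le\tilde\Psi_{d'}$ pointwise whenever $d_1\le d<d'\le d_2$. By $L$-periodicity this reduces to $(0,L]$, and one argues by cases according to the positions of $x+\beta_d<x+\beta_{d'}$ relative to $w_1$: when both arguments lie in the same monotone branch of $\psi$ the inequality is immediate (since $\beta_d<\beta_{d'}$ and the winding term $L\,\mathbbm{1}_{\{\cdot>w_1\}}$ is itself nondecreasing in $d$), and in the one remaining region $x+\beta_d<w_1\le x+\beta_{d'}$ one has $\tilde\Psi_d(x)=\psi(x+\beta_d)<L\le\psi(x+\beta_{d'})+L=\tilde\Psi_{d'}(x)$ from the limit values at $w_1$; moreover the order is strict at every $x$, since $\psi$ is strictly increasing. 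The extra hypothesis $\Phi_d(\alpha_{d_2})<\Phi_d(\beta_{d_1})$ --- equivalently the single, $d$-independent inequality $\psi(\alpha_{d_2})<\psi(\beta_{d_1})$ --- then enters to make the jump of \emph{every} $\tilde\Psi_d$ at $L\Z$ bounded below by the positive constant $\psi(\beta_{d_1})-\psi(\alpha_{d_2})$, uniformly in $d\in[d_1,d_2]$; this is exactly the uniform non-degeneracy that the devil's-staircase conclusions require but that plain \textbf{(C4)}, allowing equality $\Phi_d(\alpha_d)=\Phi_d(\beta_d)$, would not provide. I expect this recentring/ordering/uniformity step to be the main obstacle: once the circle is fixed, the naive picture ``the graph of $\Phi$ shifts upward with $d$'' is unavailable because the invariant interval \emph{translates} with $d$, and one must balance the sliding $\psi$-window against the compensating winding correction and keep control of the jumps --- precisely the point, flagged just before the theorem statement, at which an assumption beyond \textbf{(C4)} is needed.

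With $(\tilde\Psi_d)_{d\in[d_1,d_2]}$ now known to be a continuous, strictly order-increasing family of strictly increasing orientation-preserving lifts on the fixed circle $\R/L\Z$, all with uniformly positive jump, the four conclusions follow from the devil's-staircase theorem of~\cite{brette} (together with classical Poincar\'e theory): $\rho$ is continuous and non-decreasing; for each rational $p/q\in\textrm{image}(\rho)$ the mode-locking set $\rho^{-1}(p/q)$ is a non-degenerate interval --- here the \emph{positivity} of the jump is essential, a family of rigid rotations locking nowhere --- which can reduce to a point only if it abuts an endpoint of $[d_1,d_2]$; for each irrational value in $\textrm{image}(\rho)$ the set $\rho^{-1}$ is a singleton, by strictness of the increasing relation; and since the locking intervals are countably many and non-degenerate, the set of $d$ with $\varrho_d$ irrational is the complement in $[d_1,d_2]$ of their union --- a closed, nowhere-dense set which becomes perfect, hence a Cantor set, after discarding the countably many locking-interval endpoints. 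I would finally note that, in contrast with the dynamical statements of subsection~\ref{nonoversec}, this whole argument is conducted at the level of monotone circle maps and never invokes any bound on $\Phi'$, so the divergence of the derivative at the discontinuity is irrelevant here.
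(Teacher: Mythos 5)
Your argument is correct, and it follows a genuinely different route from the paper's. The paper deals with the obstruction you flag (the invariant interval $[\beta_d,\alpha_d]$ translates with $d$, so lifts anchored to it need not be ordered near $[\beta_{d_1},\beta_{d_2}]$) by invoking the hypothesis $\Phi_d(\alpha_{d_2})<\Phi_d(\beta_{d_1})$ from the outset: it makes the \emph{enlarged} interval $[\beta_{d_1},\alpha_{d_2}]$ invariant and non-overlapping for every $d$, builds new lifts on that common interval (which acquire an extra positive jump of amplitude $d_2-d_1$ at $w_{1}$ modulo the period), obtains monotonicity in $(w,d)$ essentially by inspection since these lifts differ by $d-d'$ away from the discontinuities, checks Hausdorff-continuity trivially, and concludes via the parametric theorem of Brette/Rhodes--Thompson. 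You instead conjugate each $\Psi_d$ by the translation $x\mapsto x+\beta_d$, exploiting the $d$-independence of $\psi=\Phi_d-\beta_d$ and of $L=\alpha_d-\beta_d$, so that all lifts live on one fixed circle with the jump pinned at $L\mathbb{Z}$; your case-by-case verification of the strict increasing relation (using $\psi(w_1^-)=L$, $\psi(w_1^+)=0$, and strict monotonicity of $\psi$ on the two branches, valid because \textbf{(C1)} at $d_2$ and \textbf{(C2)} give $\beta_{d_1}<w_1<\alpha_{d_2}<\min(w^*,w_2)$) is sound and, notably, does not use the extra hypothesis at all, whereas in the paper it is consumed already at the construction stage. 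In your route the hypothesis serves only to bound the jump $\psi(\beta_d)-\psi(\alpha_d)\geq\psi(\beta_{d_1})-\psi(\alpha_{d_2})>0$ uniformly in $d$, i.e.\ to guarantee strict non-overlap for every $d$, which is precisely what non-degenerate rational plateaus require (this matches the paper's remark after its proof that locking holds with no further condition exactly in the strictly non-overlapping case). The trade-offs: the paper works with honest lifts of $\Phi_d$ on a genuine invariant interval and gets the parameter-monotonicity for free, at the cost of an extra discontinuity in the lift and an earlier use of the hypothesis; your conjugation isolates the hypothesis's role and makes the monotone dependence an explicit computation, at the cost of the (routine) sup-norm continuity check in $d$ near the moving point $w_1-\beta_d$ -- where both recentred lifts lie within a modulus of continuity of $\psi$ from $L$ -- and of treating the lift value at $x+\beta_d=w_1$ separately, which is immaterial for rotation numbers. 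Both proofs ultimately rest on the same theorem for monotone families of (possibly discontinuous) increasing degree-one lifts.
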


A similar result holds for the dependence of the rotation number on the parameter $\gamma$ in the regime where we can ensure the suitable monotonicity of $\gamma\mapsto \varrho_{\gamma}$.  Fig. \ref{fig_Devil_Staircase} illustrates a case where this theorem applies.

\begin{figure}[htbp]
\centering
\includegraphics[width=0.8\textwidth]{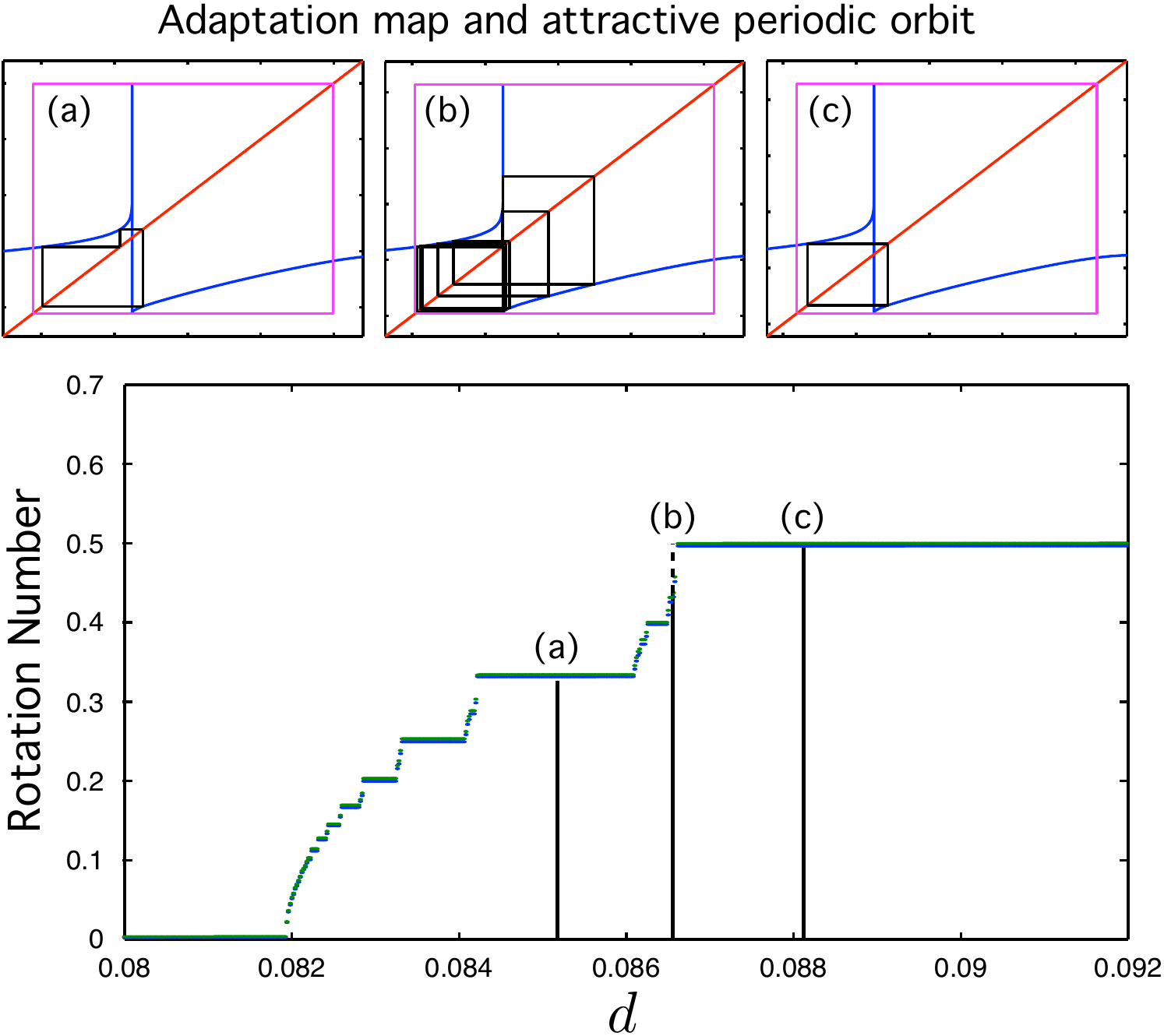}
\caption{Rotation number as a function of $d$. The parameter values $v_R=0.1$ and $\gamma=0.05$ have been chosen such that the adaptive map $\Phi$ fulfills condition \textbf{(C4)} for any value of $d \in [0.08,0.092]$. Theorem~\ref{new_devil} applies here, and the rotation number varies as a devil's staircase, as shown in the bottom plot. The top panels show the adaptation map and corresponding attractive periodic orbit at the $d$ values labelled correspondingly in the rotation number plot; note that the rotation number for case (b) is a rational number between 1/3 and 1/2.} 
\label{fig_Devil_Staircase}
\end{figure}

\subsection{Overlapping case}\label{sec:overlap}
Now let $\Phi$  satisfy the properties of the overlapping case \textbf{(C4')}. In this case, the lift $\Psi$ is no longer increasing (it has negative jumps at the points $x_k=\alpha+k (\alpha-\beta)$ for $k\in \mathbbm{Z}$), and a number of important properties inherited from the well-behaved dynamics of orientation-preserving circle homeomorphisms that persist in the non-overlapping case~\cite{brette,frrhodes,frrhodes2} are now lost, leaving room for still richer dynamics.

In the overlapping case, it is easy to see that our map restricted to its invariant interval $[\beta,\alpha]$ falls in the framework of the so-called \emph{old heavy maps}~\cite{misiu}, since it is a lift of a degree-one circle map with only negative jumps. These maps have interesting dynamics with non-unique rotation numbers. More precisely, we can define a rotation interval $[a(\Psi),b(\Psi)]$ with
\begin{eqnarray}\label{eq:def_ab}
	a(\Psi)&:=\inf_{w\in\mathbb{R}}\liminf_{n\to\infty}\frac{\Psi^n(w)-w}{n(\alpha-\beta)},\\
	b(\Psi)&:=\sup_{w\in\mathbb{R}}\limsup_{n\to\infty}\frac{\Psi^n(w)-w}{n(\alpha-\beta)}.
\end{eqnarray}
As noted in~\cite{misiu}, these two quantities are the (unique) rotation numbers of the continuous orientation preserving maps:
\begin{eqnarray}
\Psi_l(w)&:=& \inf\{\Psi(z): z\geq w\}, \label{a_new}\\
\Psi_r(w)&:=& \sup\{\Psi(z): z\leq w\}; \label{b_new}
\end{eqnarray}
that is, $a(\Psi)=\varrho(\Psi_l)$ and $b(\Psi)=\varrho(\Psi_r)$. The corresponding maps $\Psi_l$ and $\Psi_r$ for the adaptation map of the hybrid neuron model are plotted  in Fig.~\ref{fig:PsiLPsiR}.

\begin{figure}[h]
	\centering
		\includegraphics[width=.7\textwidth]{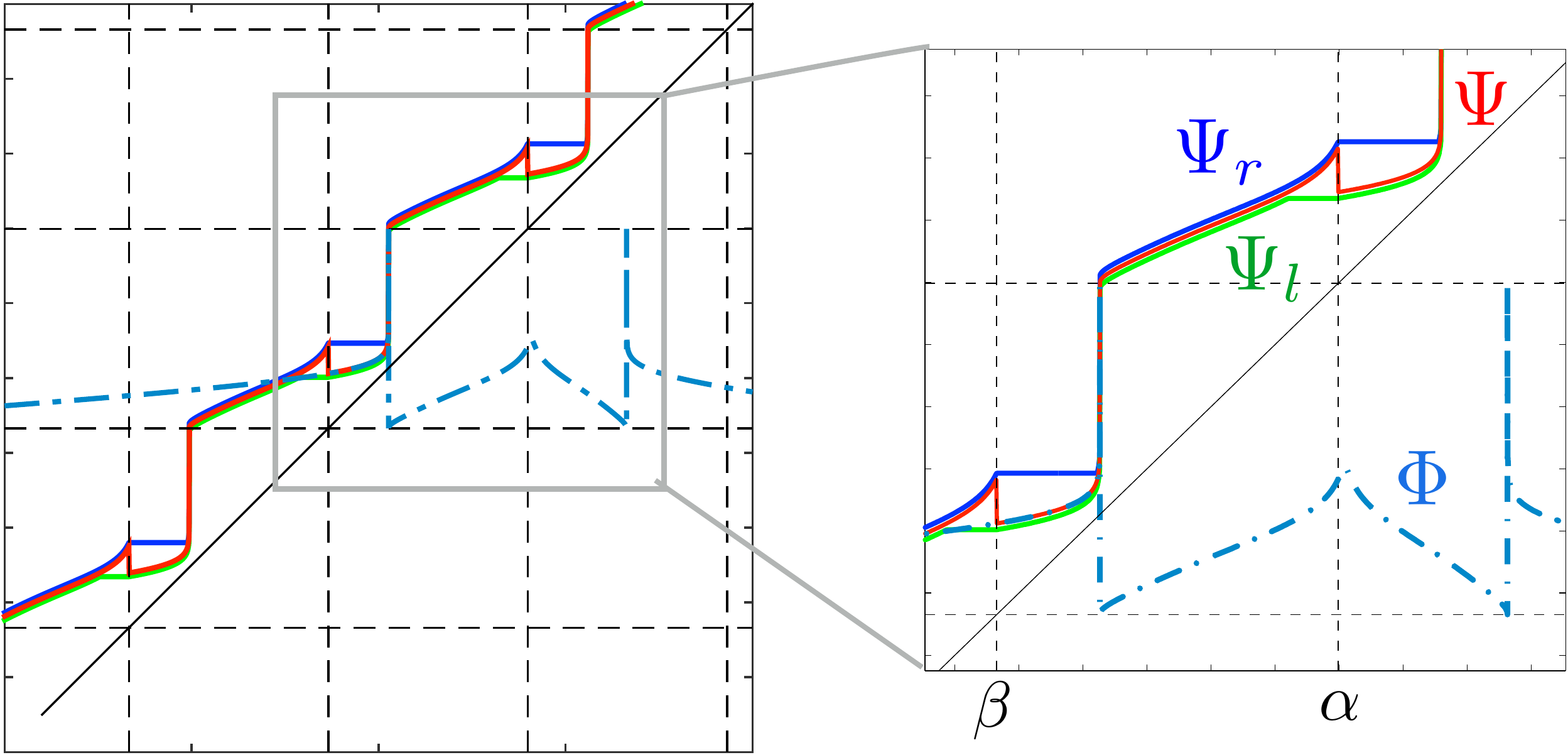}
	\caption{The orientation-preserving maps $\Psi_l$ (green) and $\Psi_r$ (blue) enveloping the lift $\Psi$ (red line), which is non-monotonic and admits negative jumps, for the adaptation map $\Phi$ (blue dashed curve) in the overlapping case.}
	\label{fig:PsiLPsiR}
\end{figure}

 We can now conclude after \cite{misiu}:
 \begin{proposition}\label{Effective}  Under  assumption  \textbf{(C4')}, 
\begin{enumerate}
 \item if $\Phi$ admits a $q$-periodic point $  w$ with rotation number $\varrho(\Psi,w)=p/q$, then $a(\Psi)\leq p/q\leq b(\Psi)$;
  \item if $a(\Psi)<p/q<b(\Psi)$, then $\Phi$ admits a periodic point $w$ of period $q$ and rotation number $\varrho(\Psi,w)=p/q$. 
  \end{enumerate}
In both cases, the orbit of $w$ displays MMOs, unless $p=0$, in which case $w$ is a fixed point in $[\beta,w_1)$. 
Moreover, for any $\varrho_1$ and $\varrho_2$ such that $a(\Psi)\leq \varrho_1\leq \varrho_2\leq b(\Psi)$, there exist $w$ such that
\begin{eqnarray}
	\liminf_{n\to\infty}\frac{\Psi^n(w)-w}{n(\alpha-\beta)}&= \varrho_1, \\
	\limsup_{n\to\infty}\frac{\Psi^n(w)-w}{n(\alpha-\beta)}&= \varrho_2.
\end{eqnarray}
\end{proposition}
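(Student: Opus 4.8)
The plan is to obtain Proposition~\ref{Effective} as a corollary of Misiurewicz's rotation theory for old heavy maps~\cite{misiu}, the only genuine work being to verify that the lift $\Psi$ of Definition~\ref{def:Lift} belongs to that class; everything else is then a translation of Misiurewicz's conclusions into the neuronal language. Recall that an old heavy map is a lift of a degree-one circle map whose discontinuities are all downward (negative) jumps, and that for such a map the rotation set is the closed interval $[a(\Psi),b(\Psi)]$ with $a(\Psi)=\varrho(\Psi_l)$, $b(\Psi)=\varrho(\Psi_r)$ as recalled in \eqref{eq:def_ab}--\eqref{b_new}.

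First I would check this structural hypothesis. By construction $\Psi(x+k(\alpha-\beta))=\Psi(x)+k(\alpha-\beta)$, so $\Psi$ is a degree-one lift on the circle of length $\vert\I\vert=\alpha-\beta$. Using Theorem~\ref{prop_phi} (continuity of $\Phi$ off $\{w_i\}$, together with $\Phi(w_1^-)=\alpha$ and $\Phi(w_1^+)=\beta$) and the definition \eqref{eq:DefLift}, the map $\Psi$ is continuous on the interior of $\I$: it is continuous on $(\beta,w_1)$ and on $(w_1,\alpha)$, and at $w_1$ both one-sided limits equal $\alpha$; the fact that under \textbf{(C2')} the restriction of $\Phi$ to $(w^*,\alpha]$ is decreasing only makes $\Psi$ non-monotone there, which is irrelevant for the heavy-map structure. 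Hence the only discontinuities of $\Psi$ on $\R$ are at the points $x_k=\alpha+k(\alpha-\beta)$, and the extension relation gives a jump equal to $\Phi(\beta)-\Phi(\alpha)$, which is strictly negative precisely under the overlapping condition \textbf{(C4')}. So $\Psi$ is an old heavy map, its envelopes $\Psi_l,\Psi_r$ from \eqref{a_new}--\eqref{b_new} are continuous non-decreasing degree-one lifts, and $a(\Psi)=\varrho(\Psi_l)\le b(\Psi)=\varrho(\Psi_r)$ are well defined.

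The three assertions then follow. Part (1) is immediate from \eqref{eq:def_ab}: a $q$-periodic point $w$ satisfies $\Psi^q(w)=w+p(\alpha-\beta)$, so the full limit $(\Psi^n(w)-w)/(n(\alpha-\beta))\to p/q$ exists, whence $p/q\in[a(\Psi),b(\Psi)]$. Part (2) is Misiurewicz's realization theorem for old heavy maps: every rational $p/q$ in lowest terms with $a(\Psi)<p/q<b(\Psi)$ is the rotation number of a periodic point of $\Psi$, and coprimality of $p$ and $q$ forces the least period to be exactly $q$ (a strictly smaller period $q'$ dividing $q$ would yield a rotation number with denominator dividing $q'<q$, contradicting $\gcd(p,q)=1$). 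The final statement -- that any $\varrho_1\le\varrho_2$ in $[a(\Psi),b(\Psi)]$ are realized as $\liminf$ and $\limsup$ of $(\Psi^n(w)-w)/(n(\alpha-\beta))$ for a suitable $w$ -- is likewise part of Misiurewicz's analysis of the rotation interval, obtained by concatenating along a single orbit increasingly long blocks that shadow periodic orbits whose rotation numbers approach $\varrho_1$ and $\varrho_2$ alternately.

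It remains to translate into the neuron model. For a $q$-periodic orbit $\{\bar w_0,\dots,\bar w_{q-1}\}$ of $\Phi$ with rotation number $p/q$, iterating the lift and using \eqref{eq:DefLift} shows that $p=\#\{\,n:\bar w_n\in(w_1,\alpha]\,\}$, i.e.\ exactly $p$ of the $q$ reset values are followed by one small oscillation before spiking; hence if $p\ge1$ the orbit alternates spikes and small oscillations and the pattern is a (regular, by the reasoning of Proposition~\ref{forMMBO}) MMO. If $p=0$, coprimality forces $q=1$, so $w$ is a fixed point of $\Phi$; a fixed point in $(w_1,\alpha]$ would satisfy $\Psi(w)=\Phi(w)+(\alpha-\beta)=w+(\alpha-\beta)$ and thus have rotation number $1\ne0$, so $w$ must lie in $[\beta,w_1)$. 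I expect the main obstacle to be the bookkeeping of this first checking step: confirming, uniformly over \textbf{(C2)} and \textbf{(C2')}, that $\Psi$ meets the exact hypotheses of \cite{misiu} -- in particular that the non-monotone piece appearing under \textbf{(C2')} is harmless and that Misiurewicz's statements are invoked with his precise assumptions (closedness of the rotation interval, continuity of the envelopes, the least-period claim) rather than a superficially similar variant.
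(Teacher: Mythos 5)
Your proposal is correct and follows essentially the same route as the paper, which likewise obtains the proposition by observing that under \textbf{(C4')} the lift $\Psi$ is an old heavy map (degree-one lift with only negative jumps, at $\alpha+k(\alpha-\beta)$) and then invoking Misiurewicz's rotation-interval results, with the MMO/fixed-point translation handled exactly as you describe. Your write-up simply makes explicit the structural verification and the least-period and $p=0$ bookkeeping that the paper leaves implicit.
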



This result implies in particular that the rotation set in the overlapping case is closed, meaning that every number $\varrho\in [a(\Psi),b(\Psi)]$ is the rotation number $\varrho(\Psi,w)$  of an orbit with initial condition $w\in [\beta,\alpha]$, and the rational numbers in its interior correspond inevitably to periodic orbits. The property of having a non-trivial rotation interval implies coexistence of infinitely many periodic orbits of distinct periods; this situation is sometimes referred to as `chaos' (see \cite{keener}), although this notion differs from the chaos associated with non-regular (`chaotic') behavior of orbits with non-rational rotation numbers. We next consider (i) the variation of the rotation interval as a function of the reset parameters and (ii) the relationship between rotation intervals and MMO patterns.

A  result from~\cite[Theorem B]{misiu} ensures continuous dependence of the boundaries of the rotation interval $a(\Psi)$ and $b(\Psi)$ on map parameters when $\Psi_l$ and $\Psi_r$, regarded as elements of the space $C^0(\mathbb{R})$ occupied with the uniform topology, depend continuously on these parameters. The following proposition makes this dependence more precise in our case by showing that these vary as a devil's staircase under mild assumptions.

\begin{proposition}\label{prop:overlap_devil} Consider fixed parameters $v_{R}$, $a$, $b$, $\gamma$ and $I$ and vary $d\in \lbrack\lambda_{1},\lambda_{2}\rbrack$ such that, for each $d\in \lbrack\lambda_{1},\lambda_{2}\rbrack$, the corresponding adaptation map $\Phi_d$ satisfies the assumptions of the overlapping case \textbf{(C4')}. Then the maps $d\mapsto a(\Psi_d)$ and $d\mapsto b(\Psi_d)$ assigning to $d$ the endpoints of the rotation interval of $\Phi_d$ are continuous.

If we further assume that, for any pair $(d_1, d_2)\in [\lambda_1,\lambda_2]^2$ with $d_2<d_1$, we have
\begin{equation}\label{assump_overlap}
\Phi_{d_2}(\beta_{d_1})\leq \Phi_{d_2}(\beta_{d_2})+d_1-d_2,
\end{equation}
then the maps $d\mapsto \Psi_d(w)$, $d\mapsto \Psi_{d,r}(w)$ and $d\mapsto \Psi_{d,l}(w)$ are increasing for each $w$ ($\Psi_{d,r}$ and $\Psi_{d,l}$ denote, respectively,  upper and lower enveloping maps of the lift $\Psi_d$ of $\Phi_d$). Consequently, the maps $d\mapsto a(\Psi_d)$ and $d\mapsto b(\Psi_d)$ behave like a devil's staircase.
\end{proposition}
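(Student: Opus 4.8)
The plan is to reduce the $d$-family of lifts to a single circle, deduce continuity of the rotation‑interval endpoints from \cite[Theorem B]{misiu}, prove a pointwise comparison that forces monotonicity in $d$, and finally invoke the monotone‑family devil's‑staircase theorem of \cite{brette} that already underlies Theorem \ref{new_devil}. For the reduction, observe that the subthreshold flow \eqref{eq:SubthreshDyn} and $v_R$ do not depend on $d$, so $\Phi_d=\gamma\varphi+d$ with $\varphi$ independent of $d$; hence $\beta_d=\gamma w_{\lim}^-+d$ and $\alpha_d=\gamma w_{\lim}^++d$ are affine in $d$, the circle length $L:=\alpha_d-\beta_d=\gamma(w_{\lim}^+-w_{\lim}^-)$ and the discontinuity $w_1$ are independent of $d$, and all the $\Psi_d$ are degree‑one lifts of circle maps of the \emph{same} period $L$. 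I would conjugate each $\Psi_d$ by the translation $x\mapsto x-\beta_d$ into a lift $\widetilde\Psi_d$ on the fixed circle $\mathbb R/L\mathbb Z$ whose unique discontinuity sits on $L\mathbb Z$ for every $d$; this preserves the overlapping structure and all rotation numbers, so $a(\widetilde\Psi_d)=a(\Psi_d)$ and $b(\widetilde\Psi_d)=b(\Psi_d)$. A direct computation from \eqref{eq:DefLift} gives, for $x\in(0,L]$, $\widetilde\Psi_d(x)=g(x+\beta_d)$ with $g(y):=\gamma\varphi(y)-\gamma w_{\lim}^-+L\,\mathbbm{1}_{\{y>w_1\}}$, extended by $\widetilde\Psi_d(x+L)=\widetilde\Psi_d(x)+L$; by Theorem \ref{prop_phi}(4) the downward jump of $\gamma\varphi$ at $w_1$ has size exactly $L$ and cancels the jump of the indicator, so $g$ (with $g(w_1):=L$) is continuous on $[\beta_{\lambda_1},\alpha_{\lambda_2}]=\bigcup_{d\in[\lambda_1,\lambda_2]}[\beta_d,\alpha_d]$, which meets no discontinuity point of $\Phi$ other than $w_1$.

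For continuity, since $\widetilde\Psi_{d_1}-\widetilde\Psi_{d_2}$ is $L$‑periodic one has $\|\widetilde\Psi_{d_1}-\widetilde\Psi_{d_2}\|_{C^0(\mathbb R)}=\sup_{x\in(0,L]}|g(x+\beta_{d_1})-g(x+\beta_{d_2})|\le\omega_g(|d_1-d_2|)$, $\omega_g$ the modulus of continuity of $g$ on the relevant compact; hence $d\mapsto\widetilde\Psi_d$ is continuous into $C^0(\mathbb R)$. Using $L$‑periodicity of the lifts, the enveloping maps \eqref{a_new}--\eqref{b_new} become extrema over windows of the fixed width $L$, $\widetilde\Psi_{d,l}(x)=\inf_{z\in[x,x+L]}\widetilde\Psi_d(z)$ and $\widetilde\Psi_{d,r}(x)=\sup_{z\in[x-L,x]}\widetilde\Psi_d(z)$, so $\|\widetilde\Psi_{d_1,l}-\widetilde\Psi_{d_2,l}\|_{C^0}\le\|\widetilde\Psi_{d_1}-\widetilde\Psi_{d_2}\|_{C^0}$ and likewise for $\widetilde\Psi_{\cdot,r}$; thus $d\mapsto\widetilde\Psi_{d,l},\ d\mapsto\widetilde\Psi_{d,r}$ are continuous into $C^0(\mathbb R)$, and \cite[Theorem B]{misiu} gives continuity of $d\mapsto a(\Psi_d)=\varrho(\widetilde\Psi_{d,l})$ and $d\mapsto b(\Psi_d)=\varrho(\widetilde\Psi_{d,r})$, which is the first assertion.

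For monotonicity I would work with the original lifts $\Psi_d$ (common period $L$) and show $\Psi_{d_1}(w)\ge\Psi_{d_2}(w)$ for all $w$ whenever $\lambda_1\le d_2<d_1\le\lambda_2$; composing small increments reduces to $d_1-d_2<L$, and periodicity lets one check $w$ in the fundamental domain $(\beta_{d_2},\alpha_{d_2}]$ of $\Psi_{d_2}$. On $(\beta_{d_1},\alpha_{d_2}]$ and at $w=w_1$ one reads off $\Psi_{d_1}(w)-\Psi_{d_2}(w)=d_1-d_2>0$ (same monotonicity branch and same $+L$ offset). The only delicate region is the ``overhang'' $w\in(\beta_{d_2},\beta_{d_1})$, which lies below the fundamental domain of $\Psi_{d_1}$: rewriting $\Psi_{d_1}(w)=\Psi_{d_1}(w+L)-L$ with $w+L\in(\alpha_{d_2},\alpha_{d_1})\subset(w_1,\alpha_{d_1})$, the desired inequality becomes $\gamma(\varphi(w)-\varphi(w+L))\le d_1-d_2$; writing $w=\beta_{d'}$, $w+L=\alpha_{d'}$ for the unique $d'\in(d_2,d_1)$ shows the left‑hand side equals $\Phi_{d'}(\beta_{d'})-\Phi_{d'}(\alpha_{d'})$, and it is exactly hypothesis \eqref{assump_overlap} (a bound on the growth of $\Phi$ near $\beta$ by the parameter increment), together with Theorem \ref{prop_phi}(3) on the monotonicity of $\Phi$ below $w^*$, that is meant to secure this comparison. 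From $\Psi_{d_1}\ge\Psi_{d_2}$ one gets $\Psi_{d_1,l}\ge\Psi_{d_2,l}$ and $\Psi_{d_1,r}\ge\Psi_{d_2,r}$ pointwise, hence $d\mapsto\Psi_d(w),\Psi_{d,l}(w),\Psi_{d,r}(w)$ increasing for each $w$, and since the rotation number is monotone under the pointwise order on monotone degree‑one lifts of a common period, $a(\Psi_{d_1})\ge a(\Psi_{d_2})$ and $b(\Psi_{d_1})\ge b(\Psi_{d_2})$. I expect this overhang comparison to be the main obstacle: one must keep precise track of which period and which branch ($w\lessgtr w_1$, $w\lessgtr w^*$) each of $w$ and $w+L$ lies in, and the care is genuinely needed because, unlike the non‑overlapping case, $\widetilde\Psi_d$ is itself non‑monotone under \textbf{(C2')}.

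Finally, combining the two previous steps, $d\mapsto a(\Psi_d)=\varrho(\Psi_{d,l})$ and $d\mapsto b(\Psi_d)=\varrho(\Psi_{d,r})$ are continuous and non‑decreasing, and the continuous orientation‑preserving monotone lifts $\Psi_{d,l},\Psi_{d,r}$ move non‑trivially with $d$ (for instance $d\mapsto\Psi_{d,l}(w_1)=\alpha_d$ is strictly increasing, using Theorem \ref{prop_phi}(4) and \textbf{(C3)}). The monotone‑family devil's‑staircase theorem of \cite{brette} used for Theorem \ref{new_devil} then applies to each of $\Psi_{d,l}$ and $\Psi_{d,r}$, so each of $d\mapsto a(\Psi_d)$, $d\mapsto b(\Psi_d)$ takes each rational value of its range on a nondegenerate phase‑locking interval (nonempty, as in Proposition \ref{Effective}, by persistence of the associated periodic orbit under perturbation of $d$), each irrational value at a single point, and is constant off a set which is, up to countably many points, a Cantor subset of $[\lambda_1,\lambda_2]$; that is, it behaves like a devil's staircase in the sense made precise for the non‑overlapping case in Theorem \ref{new_devil}.
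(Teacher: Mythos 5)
Your overall architecture is the same as the paper's: continuity of $d\mapsto a(\Psi_d)$, $b(\Psi_d)$ via $C^0$-continuity of the enveloping maps and Misiurewicz's Theorem B, then pointwise monotonicity of $d\mapsto\Psi_d$, hence of $\Psi_{d,l},\Psi_{d,r}$, and finally the monotone-family staircase theorem of Brette/Katok. Your continuity half is correct and in fact tidier than the paper's: translating by $\beta_d$ reduces everything to the modulus of continuity of the single $d$-independent function $g$ on the compact $[\beta_{\lambda_1},\alpha_{\lambda_2}]$ (which under \textbf{(C1)} contains no discontinuity of $\varphi$ besides $w_1$, where the jumps cancel by Theorem~\ref{prop_phi}(4)), together with the $1$-Lipschitz property of the window inf/sup; the paper obtains the same uniform estimate by direct case analysis on the overhang $[\beta_{d_2},\beta_{d_1}]$ with Lipschitz constants of $\Phi$.

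The monotonicity half, however, is left with a gap exactly at its pivotal step. You correctly isolate the overhang $w\in(\beta_{d_2},\beta_{d_1})$, correctly rewrite $\Psi_{d_1}(w)=\Phi_{d_1}(w+L)$, and correctly reduce the claim to $\gamma(\varphi(w)-\varphi(w+L))\leq d_1-d_2$ with left-hand side equal to $\Phi_{d'}(\beta_{d'})-\Phi_{d'}(\alpha_{d'})$, $d'\in(d_2,d_1)$; but you then only assert that \eqref{assump_overlap} together with Theorem~\ref{prop_phi}(3) ``is meant to secure'' this and flag it as the main obstacle. No argument is given, and the mechanism you point to does not match the quantity you reduced to: \eqref{assump_overlap} compares $\Phi_{d_2}$ at the two points $\beta_{d_2},\beta_{d_1}$, both below $w_1$, whereas your left-hand side involves $\varphi$ at $\beta_{d'}$ and at $\alpha_{d'}>w_1$. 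The irony is that your own identity closes the step in one line: since $d'\in[\lambda_1,\lambda_2]$, the overlap condition \textbf{(C4')} at $d'$ gives $\Phi_{d'}(\alpha_{d'})>\Phi_{d'}(\beta_{d'})$, so the left-hand side is negative and the inequality is immediate (so on your pointwise route \eqref{assump_overlap} is not even needed, and in fact $\Psi_{d_1}-\Psi_{d_2}\geq d_1-d_2$ everywhere). The paper argues differently and this is where its extra hypothesis genuinely enters: it bounds $\Psi_{d_1}$ on the whole overhang from below by $\min\{\Psi_{d_1}(\beta_{d_2}),\Psi_{d_1}(\beta_{d_1}^-)\}$ and $\Psi_{d_2}$ from above by $\Psi_{d_2}(\beta_{d_1})$, and the left-endpoint comparison $\Psi_{d_1}(\beta_{d_2})=\Phi_{d_2}(\alpha_{d_2})+d_1-d_2\geq\Phi_{d_2}(\beta_{d_1})$ uses \eqref{assump_overlap} together with the overlap at $d_2$. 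Your final staircase step is at the same level of detail as the paper's, which likewise defers the non-constancy and non-conjugacy caveats to a closing remark, so I do not count that as a gap; but as written the monotonicity argument is incomplete until you add the one-line estimate above (or reproduce the paper's endpoint comparison where \eqref{assump_overlap} is actually used).
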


We note that the sufficient condition (\ref{assump_overlap}) is equivalent to
\begin{equation}\label{assump_equiv}
\Psi_{d_2}(\beta_{d_1})\leq \Psi_{d_2}(\beta_{d_2}^{+})+d_1-d_2,
\end{equation}
where $\Psi_{d_2}(\beta_{d_2}^{+})$ denotes the right limit of $\Psi_{d_2}$ at $\beta_{d_2}$. This latter condition is satisfied for instance when, for every $d\in [\lambda_1,\lambda_2]$, $\Phi^{\prime}_d<1$ in the whole interval $[\beta_d,\beta_d+(\lambda_2-\lambda_1)]$.
The proposition is proved in Appendix~\ref{sec:appendix} and illustrated in Fig. \ref{Dev_St_Psir_Psil_Phi}.

 \begin{figure}[htbp]
 \centering
\includegraphics[width=0.7\textwidth]{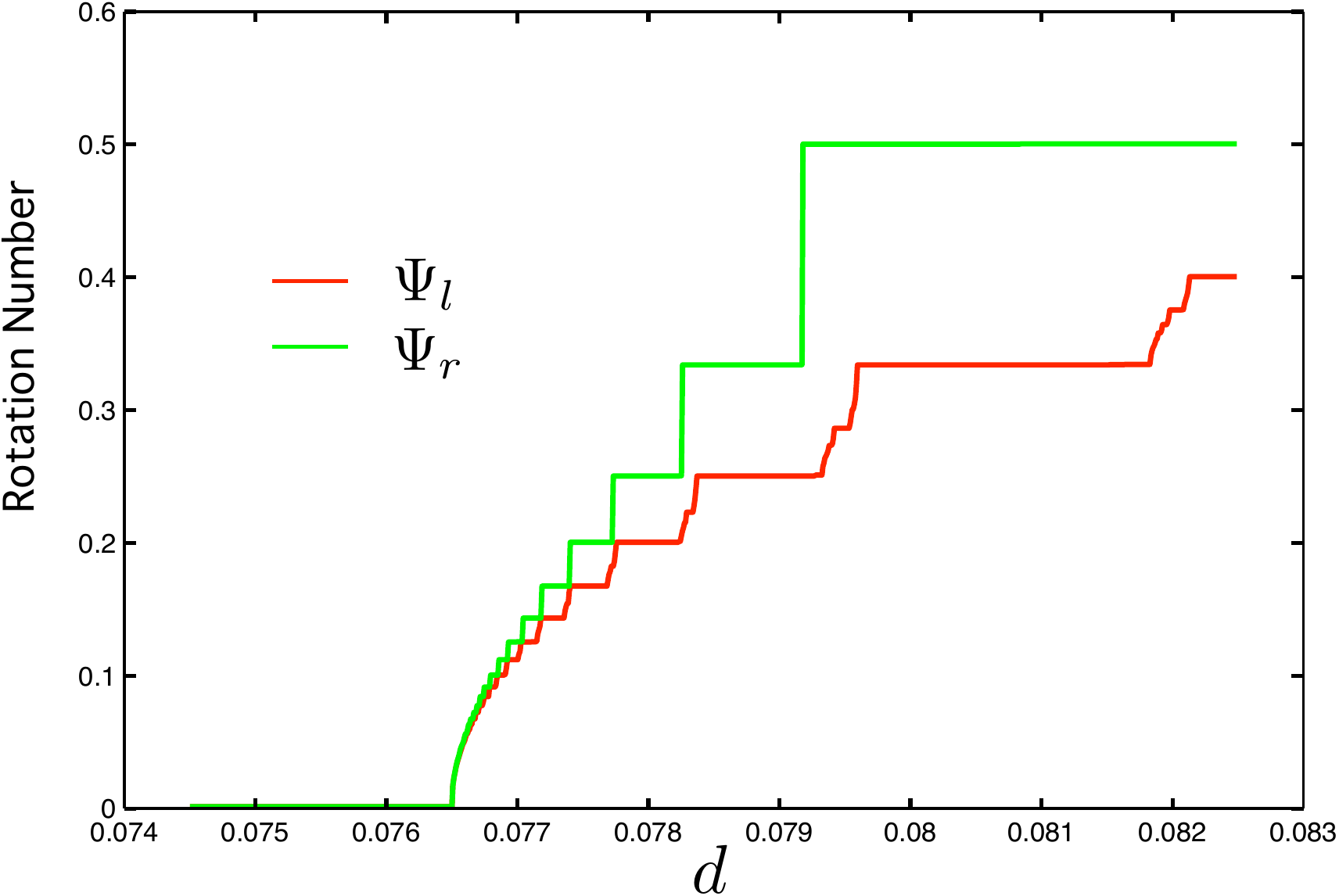}
\caption{Rotation intervals for the lifts $\Psi_{d,l}, \Psi_{d,r}$ of the adaptation maps $\Phi_d$ for a range of $d$. The parameter value $\gamma=0.05$ has been chosen so that $\Phi_d$ remains in the overlapping case for all $d \in [0.0745, 0.0825]$.}
\label{Dev_St_Psir_Psil_Phi}
\end{figure}

In the overlapping case, the pointwise rotation numbers $\varrho(\Psi,w)$ may not exist for some initial conditions and generally depend upon $w$, implying that we have non-trivial (i.e., non-singleton) rotation intervals. {Moreover, despite Proposition \ref{Effective}, even knowing the rotation interval $[a(\Psi),\beta(\Psi)$] does not fully determine yet the structure of the set of all (minimal) periods of orbits of $\Phi$.} Specific cases were fully characterized, however, notably degree-one continuous non-injective circle maps~\cite{llibre,misiurot}. For maps with discontinuities, the issue is very complex and, to our knowledge, periods of periodic orbits are completely described only for lifts of monotonic modulo 1 transformations (see \cite{hofbauer}), which corresponds to the overlapping case with the additional monotonicity assumption \textbf{(C2)}. In addition to these difficulties, the overlap prevents systematic deduction of the MMO signature from knowledge of the rotation number, since the rotation number $\varrho(\Psi,w)$ does not determine the ordering of points on the orbit of $w$.

Specific analysis on the maps considered here, however, provides some information about this characterization. First, when \textbf{(C2)} holds, the map $\Phi$ is piecewise increasing and thus Proposition~\ref{forMMBO} applies and ensures that periodic orbits are associated to MMBOs. When \textbf{(C2)} is not valid, we still know that periodic orbits fully contained in $(w_1,\alpha)$ correspond to MMOs with signature $1^1$. Beyond these particular cases, we now demonstrate more general results under milder assumptions.

 \begin{proposition}\label{male1} Assume that $\Phi$ fulfills \textbf{(C4')} and admits at least two fixed points, $w_f \in \lbrack \beta, w_1)$ and $\hat{w}_f\in (w_1,\alpha)$. Then there exist periodic orbits of arbitrary period displaying MMOs (which are MMBOs under \textbf{(C2)}).   
 \end{proposition}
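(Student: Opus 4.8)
The plan is to use the lift $\Psi$ of Definition~\ref{def:Lift} to show that the two hypothesised fixed points pin the rotation interval $[a(\Psi),b(\Psi)]$ around the whole segment $[0,1]$, and then to extract periodic orbits of every period from Proposition~\ref{Effective}. First I would translate the fixed points into statements about $\Psi$. Since $w_f\in[\beta,w_1)$, the lift acts on it exactly as $\Phi$, so $\Psi^{n}(w_f)=w_f$ for all $n$ and $\varrho(\Psi,w_f)=0$; by Proposition~\ref{Effective}(1) (or directly from \eqref{eq:def_ab} with $w=w_f$) this gives $a(\Psi)\le 0$. Since $\hat w_f\in(w_1,\alpha)$ lies \emph{above} the discontinuity point, the definition of the lift gives $\Psi(\hat w_f)=\Phi(\hat w_f)+(\alpha-\beta)=\hat w_f+(\alpha-\beta)$; by the $(\alpha-\beta)$-periodicity of $\Psi$ it follows that $\Psi^{n}(\hat w_f)=\hat w_f+n(\alpha-\beta)$, so $\varrho(\Psi,\hat w_f)=1$, and Proposition~\ref{Effective}(1) yields $b(\Psi)\ge 1$. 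Combining, $a(\Psi)\le 0<1\le b(\Psi)$, so the rotation interval strictly contains $(0,1)$.

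Next, for every integer $q\ge 2$ the rational number $1/q$ is in lowest terms and satisfies $a(\Psi)<1/q<b(\Psi)$, so Proposition~\ref{Effective}(2) supplies a periodic point of $\Phi$ of minimal period $q$ with rotation number $1/q$; as $p=1\ne 0$, the same statement guarantees that this orbit displays MMOs, and via the correspondence between orbits of $\Phi$ and trajectories of \eqref{eq:SubthreshDyn}--\eqref{eq:new_reset} it yields a regular MMO (with periodic signature) of the neuron model. Letting $q$ run through $\{2,3,4,\ldots\}$ produces pairwise distinct periodic MMO orbits of all periods $\geq 2$, and the hypothesised fixed point $\hat w_f$ itself is a period-one orbit (signature $1^1$); together these realise every period in $\N^{*}$. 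Finally, if \textbf{(C2)} additionally holds, then $\Phi$ is piecewise increasing, Proposition~\ref{forMMBO} applies, and every periodic orbit of $\Phi\vert_{\mathcal{I}}$ of period $>1$ is an MMBO, upgrading the conclusion exactly as claimed.

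I do not expect a deep analytical obstacle: the real content is the bookkeeping around the discontinuity point. The key subtlety is recognising that $\hat w_f$, although a genuine fixed point of $\Phi$, has \emph{lift} rotation number $1$ and not $0$ — this extra unit of rotation, produced by the single small oscillation performed from every point of $(w_1,\alpha)$, is exactly what forces the rotation interval to be non-degenerate. A minor secondary point is to confirm that $1/q$ lies in the \emph{open} interval $(a(\Psi),b(\Psi))$ for each $q\ge 2$ (it does, since $a(\Psi)\le 0<1/q<1\le b(\Psi)$), so that the stronger item (2) of Proposition~\ref{Effective}, rather than merely item (1), can be applied to obtain the periodic points.
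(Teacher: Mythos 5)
Your argument is correct and follows essentially the same route as the paper's proof: the fixed point $w_f\in[\beta,w_1)$ gives rotation number $0$, the fixed point $\hat w_f\in(w_1,\alpha)$ satisfies $\Psi(\hat w_f)=\hat w_f+(\alpha-\beta)$ and gives rotation number $1$, so the rotation interval contains $[0,1]$ and Proposition~\ref{Effective} (with Proposition~\ref{forMMBO} under \textbf{(C2)}) yields the conclusion. You merely spell out more explicitly the extraction of periods via the rationals $1/q$, which the paper leaves implicit.
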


\begin{proof}
	First of all, we note that since $w_{f}\in \lbrack \beta, w_1)$ is a fixed point of $\Phi$, it is also a fixed point for $\Psi$ and thus the associated rotation number is equal to $0$. Moreover, for $\hat{w}_f\in (w_1,\alpha)$  we have $\Psi(\hat{w}_f)=\hat{w}_f+(\alpha-\beta)$  and thus the associated rotation number under $\Psi$ is equal to $1$. We thus conclude that the rotation interval of $\Psi$ contains the full interval $\lbrack0,1\rbrack$, which concludes the proof.
	\end{proof}

 \begin{proposition}\label{male2} Assume that $\Phi$ satisfies \textbf{(C4')} and that $\Phi$ admits at least one fixed point in $(w_1,\alpha)$, the smallest of which we denote by $w_f\in (w_1,\alpha)$. Assume moreover that there is no fixed point of $\Phi$ in $\lbrack\beta,w_1)$. Then
 \begin{itemize}
   \item if $\Phi(\beta)<w_f$, then there exists $\tilde{q}>1$ such that for all $q>\tilde{q}$, $\Phi$ admits a periodic orbit of period $q$, and the associated trajectories display MMOs;
   \item if $\Phi(\beta)\geq w_f$, then $\Phi$ admits a trivial rotation interval $\lbrack a(\Psi),b(\Psi)\rbrack=\{1\}$ and periodic orbits correspond to MMOs with signature $1^1$. If additionally $\alpha\leq w^*$, then $\Phi$ admits no periodic orbit of period $q>1$, every orbit converges towards a fixed point in $(w_1,\alpha]$, and associated trajectories display asymptotically regular MMOs with signature $1^1$. 
 \end{itemize}
 \end{proposition}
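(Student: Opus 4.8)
The plan is to pin down the rotation interval $[a(\Psi),b(\Psi)]$ of the lift $\Psi$ from the position of $w_f$ relative to $\Phi(\beta)$, and then to feed this into Proposition~\ref{Effective} to obtain the claimed periodic orbits, their nonexistence, and their signatures. I will rely on three preliminary facts. First, since $\Phi$ has no fixed point in $[\beta,w_1)$ while $\Phi(\beta)\ge\beta$ by \textbf{(C3)}, in fact $\Phi(w)>w$ throughout $[\beta,w_1)$, and by Theorem~\ref{prop_phi}(3) together with $\Phi(w_1^-)=\alpha$, $\Phi(w_1^+)=\beta$, the map $\Phi$ is increasing on $[\beta,w_1)$. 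Second, writing $N_n(w)=\#\{0\le k<n:\Phi^k(w)\in(w_1,\alpha]\}$, the cocycle property of the lift gives $\Psi^n(w)-w=\Phi^n(w)-w+N_n(w)(\alpha-\beta)$ with $\Phi^n(w),w\in[\beta,\alpha]$, so every rotation number of $\Psi$ lies in $[0,1]$ and in particular $b(\Psi)\le1$. Third, since $w_f\in(w_1,\alpha)$ is fixed by $\Phi$, the lift satisfies $\Psi(w_f)=w_f+(\alpha-\beta)$, hence $\varrho(\Psi,w_f)=1$ and therefore $b(\Psi)=1$.

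Suppose first $\Phi(\beta)<w_f$. I would show $a(\Psi)=\varrho(\Psi_l)<1$, where $\Psi_l(w)=\inf\{\Psi(z):z\ge w\}$; by the cocycle property and the criterion ``$\varrho(f)<1$ iff $f(x)<x+(\alpha-\beta)$ for all $x$'' for orientation-preserving lifts, it suffices to check $\Psi_l(w)<w+(\alpha-\beta)$ on the fundamental domain $(\beta,\alpha]$, split into three pieces. For $w\in(\beta,w_1]$, $\Psi_l(w)\le\Psi(w)\le\alpha<w+(\alpha-\beta)$ by Theorem~\ref{prop_phi}(8) (applicable since $v_R<v_-<v_+$) and $\beta<w$. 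For $w\in(w_1,w_f)$, minimality of $w_f$ and $\Phi(w_1^+)=\beta<w_1$ force $\Phi(w)<w$ there, so $\Psi_l(w)\le\Psi(w)=\Phi(w)+(\alpha-\beta)<w+(\alpha-\beta)$. For $w\in[w_f,\alpha]$ one first checks $\min_{[\beta,\alpha]}\Psi=\Phi(\beta)$ (on $(\beta,w_1)$, $\Psi=\Phi\to\Phi(\beta)$ at $\beta^+$; on $[w_1,\alpha]$, $\Psi\ge\alpha>\Phi(\beta)$ by Theorem~\ref{prop_phi}(8) and the overlap $\Phi(\alpha)>\Phi(\beta)$), whence $\Psi_l(w)\le\inf_{z\ge\alpha}\Psi=\Phi(\beta)+(\alpha-\beta)<w_f+(\alpha-\beta)\le w+(\alpha-\beta)$. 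Thus the rotation interval is $[a(\Psi),1]$ with $a(\Psi)<1$, and for every rational $p/q\in(a(\Psi),1)$ Proposition~\ref{Effective} yields a period-$q$ periodic point with rotation number $p/q$; since $p/q>0$ we have $p\ge1$, so its orbit visits $(w_1,\alpha)$ and displays MMOs (MMBOs under \textbf{(C2)} by Proposition~\ref{forMMBO}). Taking $\tilde q>1/(1-a(\Psi))$, the interval $(a(\Psi),1)$ contains a fraction of denominator $q$ for each $q>\tilde q$, which is the first bullet.

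Suppose now $\Phi(\beta)\ge w_f$. The overlap and \textbf{(C4')} give $\Phi(\alpha)>\Phi(\beta)\ge w_f>w_1$; under \textbf{(C2')} this in addition forces the smallest fixed point to satisfy $w_f<w^*$ (otherwise $\Phi$ would be decreasing on $[w_f,\alpha]$ with $\Phi(w_f)=w_f$, contradicting $\Phi(\alpha)>w_f$). Hence in all cases $\min_{[w_f,\alpha]}\Phi=\min(\Phi(w_f),\Phi(\alpha))=w_f$, and together with $\inf_{z\ge\alpha}\Psi=\Phi(\beta)+(\alpha-\beta)\ge w_f+(\alpha-\beta)$ this gives $\Psi_l(w_f)=w_f+(\alpha-\beta)$. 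So $w_f$ is a rotation-$1$ fixed point of the orientation-preserving lift $\Psi_l$, i.e. $a(\Psi)=1$, and with $b(\Psi)=1$ the rotation interval is $\{1\}$; any periodic orbit then has rotation number $1$, so by the cocycle identity its $q$ points all lie in $(w_1,\alpha]$ and its signature is $1^1$. If moreover $\alpha\le w^*$, then $\Phi$ is increasing on all of $(w_1,\alpha]$ and, since $\Phi(w_f)=w_f$ and $\Phi(\alpha)\in(w_f,\alpha)$, the compact interval $[w_f,\alpha]$ is invariant under $\Phi$; a continuous increasing self-map of a compact interval has no periodic orbit of period $>1$, and every orbit in it converges to a fixed point. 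Finally, every orbit enters $[w_f,\alpha]$: an orbit in $[\beta,w_1)$ lands in $[\Phi(\beta),\alpha)\subseteq[w_f,\alpha]$ after one step, while an orbit in $(w_1,w_f)$ is decreasing (since $\Phi(w)<w$ there), cannot converge inside $(w_1,w_f)$ nor to $w_1$ (where $\Phi\to\beta<w_1$), hence crosses below $w_1$ after finitely many steps and then re-enters $[w_f,\alpha]$. Modulo the countable exceptional set $\{\Phi^{-n}(w_1)\}_{n\ge0}$, every orbit therefore converges to a fixed point in $(w_1,\alpha]$ and exhibits asymptotically regular $1^1$ MMOs.

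The step I expect to be the main obstacle is the bookkeeping for the lower enveloping map $\Psi_l$: one must evaluate $\inf\{\Psi(z):z\ge w\}$ by comparing the minimum of $\Psi$ on $[w,\alpha]$ with that of the lowest translated copy $[\alpha,\alpha+(\alpha-\beta)]$, and this hinges on the identity $\min_{[\beta,\alpha]}\Psi=\Phi(\beta)$, which rests squarely on Theorem~\ref{prop_phi}(8) ($\beta<\Phi<\alpha$ on $(w_1,w_p)$) and the overlap. A secondary delicate point is the transient analysis in the last part, namely ruling out orbits trapped in $(w_1,w_f)$ and handling the preimages of $w_1$, which is done exactly as elsewhere in the paper.
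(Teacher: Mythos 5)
Your proof is correct and follows essentially the same route as the paper's: bound the rotation interval via the enveloping maps ($a(\Psi)=\varrho(\Psi_l)<1=b(\Psi)$ when $\Phi(\beta)<w_f$, so Proposition~\ref{Effective} yields period-$q$ orbits for all large $q$; $\Psi_l(w_f)=w_f+(\alpha-\beta)$ forcing $[a(\Psi),b(\Psi)]=\{1\}$ when $\Phi(\beta)\geq w_f$), and then use monotonicity of $\Phi$ on the invariant interval $[w_f,\alpha]$ for the case $\alpha\leq w^*$. The only difference is that you spell out details the paper leaves implicit (the piecewise evaluation of $\Psi_l$, the observation $w_f<w^*$ under \textbf{(C2')}, and the exceptional preimages of $w_1$), which is fine.
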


\begin{proof}
	We first assume that $\Phi(\beta)<w_f$. In this case, the lower envelope $\Psi_l$ intersects neither the identity ($\textrm{Id}$) line nor the $\textrm{Id}+(\alpha-\beta)$ line (and, obviously, none of the lines $\textrm{Id}+k(\alpha-\beta)$ for $k\in\mathbb{Z}$). Thus the graph of $\Psi_l$ is fully contained between the lines $\textrm{Id}$ and $\textrm{Id}+(\alpha-\beta)$ and since the functions $\Psi_l(w)-w+k(\alpha-\beta)$, $k\in\mathbb{Z}$, are continuous and $\alpha-\beta$ periodic, there exists $\delta>0$ such that for every $w$ and $n\in \N^*$ we have $\Psi^n_l(w)<w+n(\alpha-\beta)-n\delta$ and
	\[
	a(\Psi)=\varrho(\Psi_l)<1-\frac{\delta}{(\alpha-\beta)}.
	\]
	On the other hand, $\Psi_r(w_f)=w_f+(\alpha-\beta)$ and thus $b(\Psi)=\varrho(\Psi_r)=1$. Therefore, the rotation interval is not trivial and
	\[
	\lbrack 1-\frac{\delta}{(\alpha-\beta)},1\rbrack\subset \lbrack a(\Psi), b(\Psi)\rbrack.
	\]
	For every $q>1$ large enough, we have
	\[
	a(\Psi)<\frac{q-1}{q}<b(\Psi);
	\]
	that is,  there exists a periodic orbit of $\Phi$ with period $q$ and rotation number $\frac{q-1}{q}$.

	We now assume $\Phi(\beta)\geq w_f$. Then $w_f$ is also a fixed point mod $(\alpha-\beta)$ of $\Psi_l$, i.e.
	 \begin{eqnarray*}
	 & \Psi_l(w_f) = w_f+(\alpha-\beta), \\
	 & a(\Psi)=\varrho(\Psi_l)=\varrho(\Psi_r)=b(\Psi)=1.
	 \end{eqnarray*}
	 Thus, if there was some periodic orbit of period $q>1$, all points of such an orbit would lie in $(w_1,\alpha)$ and would have rotation number $1$, yielding MMOs with signature $1^1$. However, if additionally $\alpha\leq w^*$, then the map $\Phi$ is increasing in $(w_1,\alpha)$ and no periodic orbit can be fully contained in this interval. 
	 
	Assuming that $\alpha\leq w^*$, we notice that the interval $[w_f,\alpha]$ is invariant for $\Phi$ and that $\Phi$ is continuous and increasing therein. Consequently, every point $w\in [w_f,\alpha]$ tends under $\Phi$ to one of the fixed points in $[w_f,\alpha]$. But as every point in $[\beta,w_f)$ in mapped finally into $[w_f,\alpha]$, this holds for all the points in $[\beta,\alpha]$ and the proof is completed.
\end{proof}

Later, we shall complement the above result in a slightly more general situation, in Theorem \ref{unique_fixed_point},  by treating  maps admitting fixed points in $\lbrack \beta,w_1)$ and lacking a fixed point in $(w_1,\alpha\rbrack$. 
We can also easily justify the following:
\begin{corollary} In the overlapping case, the existence of a fixed point of $\Phi$  and of a periodic orbit with rotation number $p/q$, for some period $q>1$ and $p\neq q$, implies the existence of periodic orbits with all arbitrary periods greater than $q$, each yielding MMOs. 

In particular, if there exist a fixed point and a periodic orbit of rotation number $1/2$, then there are periodic orbits of all periods exhibiting MMOs. Similarly, as already proved, if there are a fixed point in $(\beta,w_1)$ and a fixed point in $(w_1,\alpha)$, then $[a(\Psi),b(\Psi)]=[0,1]$ and there are periodic orbits of all periods, with MMOs.
\end{corollary}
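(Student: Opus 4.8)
The plan is to reduce the corollary to the rotation-interval results of Proposition~\ref{Effective} together with an elementary arithmetic observation about the spacing of fractions. The starting point is that a fixed point of $\Phi$ has rotation number $0$ for the lift $\Psi$ if it lies in $[\beta,w_1)$ and rotation number $1$ if it lies in $(w_1,\alpha]$ (this is exactly the dichotomy already used in the proofs of Propositions~\ref{forMMBO} and~\ref{male1}: in the second case $\Psi(w_f)=w_f+(\alpha-\beta)$), while the given periodic orbit has rotation number $p/q$ with $0<p/q<1$, because $q>1$, $p\neq q$ and $\gcd(p,q)=1$ force $1\le p<q$. Hence $\{0\text{ or }1\}$ and $p/q$ are two \emph{distinct} points of the rotation interval $[a(\Psi),b(\Psi)]$: the value $0$ (resp.\ $1$) belongs to it because $\Psi^n$ fixes the fixed point, so $a(\Psi)\le 0$ (resp.\ $b(\Psi)\ge 1$), and $p/q\in[a(\Psi),b(\Psi)]$ by part~1 of Proposition~\ref{Effective}. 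Consequently the open rotation interval $(a(\Psi),b(\Psi))$ contains the whole of $(0,p/q)$ in the first case and of $(p/q,1)$ in the second.

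Next I would realize every period $q'>q$ inside that sub-interval. In the first case I would take the reduced fraction $1/q'$, which lies in $(0,p/q)$ since $p/q-1/q'>0\iff pq'>q$, true because $p\ge1$ and $q'>q$. In the second case I would take $(q'-1)/q'$, which is in lowest terms and lies in $(p/q,1)$ since $(1-1/q')-p/q>0\iff 1/q'<(q-p)/q$, true because $q-p\ge1$ and $q'>q$. In either case the chosen number lies strictly between $a(\Psi)$ and $b(\Psi)$, so part~2 of Proposition~\ref{Effective} yields a periodic point of $\Phi$ of period $q'$; coprimality of numerator and $q'$ makes $q'$ the minimal period, and since the rotation number is nonzero the orbit displays MMOs, again by Proposition~\ref{Effective}. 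This proves the first assertion. For the ``in particular'' claims: applying the above with $p/q=1/2$ produces periodic orbits of all periods $q'\ge3$, which together with the period-$1$ fixed point and the given period-$2$ orbit account for every period (those of period $\ge2$ displaying MMOs); and the configuration with a fixed point in $(\beta,w_1)$ and one in $(w_1,\alpha)$ puts both $0$ and $1$ into $[a(\Psi),b(\Psi)]$, which combined with the inclusion $[a(\Psi),b(\Psi)]\subseteq[0,1]$ — a direct induction from $\beta\le\Psi(x)\le\alpha+(\alpha-\beta)$ on a fundamental domain and the $(\alpha-\beta)$-periodicity of $\Psi-\mathrm{Id}$ — gives $[a(\Psi),b(\Psi)]=[0,1]$; Proposition~\ref{Effective}(2) then provides a periodic orbit for every $p/q\in(0,1)$, hence of every period $q>1$, all displaying MMOs, and the two fixed points supply period $1$.

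I do not expect a genuine obstacle here, since all the dynamical content is already contained in Proposition~\ref{Effective}; the proof is essentially bookkeeping. The one point that requires care — and the reason one cannot simply invoke density of rationals in $(a(\Psi),b(\Psi))$ — is that the statement asserts realizability of \emph{every} period exceeding $q$, not merely of all sufficiently large periods; this forces the explicit choice of a coprime fraction ($1/q'$ or $(q'-1)/q'$) adapted to each $q'$. A minor additional subtlety, easily absorbed, is a fixed point located exactly at an endpoint $\beta$ or $\alpha$, where $\Psi$ is discontinuous: there one argues with the enveloping maps $\Psi_l,\Psi_r$ of Proposition~\ref{Effective}, for which such an endpoint is still fixed modulo $\alpha-\beta$, so the rotation number $0$ (resp.\ $1$) still belongs to the rotation interval.
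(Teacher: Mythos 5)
Your proof is correct and follows essentially the route the paper intends: the corollary is stated without an explicit proof as an easy consequence of Proposition~\ref{Effective} combined with the observation (already used in Propositions~\ref{forMMBO}--\ref{male2}) that a fixed point contributes rotation number $0$ or $1$ to the rotation interval. Your explicit choice of the coprime fractions $1/q'$ and $(q'-1)/q'$ simply makes the ``all periods greater than $q$'' claim precise, and your endpoint remark via $\Psi_l,\Psi_r$ is a harmless refinement.
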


In contrast to the non-overlapping case, in the overlapping case we have dropped the assumption \textbf{(C2)}  that the map is piecewise increasing. However, under this assumption we can describe the chaotic behavior of the map's iterates more precisely: 	

\begin{corollary}\label{shift} Assume that $\Phi$ satisfies \textbf{(C4')} with \textbf{(C2)}, that $\Phi(\alpha)<w_1$, that $\Phi$ has at least two periodic orbits with periods $q_1\neq q_2$ and that exactly one point of each of these periodic orbits is greater than $w_1$. Then  the mapping $w\mapsto \Phi(w)$ is a shift on a sequence space.
\end{corollary}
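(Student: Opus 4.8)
The plan is to reduce $\Phi$ on its invariant interval $\I=[\beta,\alpha]$ to the setting of \emph{old heavy maps} / monotonic mod one transformations, use the two given periodic orbits to show its rotation interval is nondegenerate, and then invoke the Sharkovskii-type/entropy theory for such maps to produce the subshift. First I would record the geometry forced by the hypotheses. Under \textbf{(C2)}, Theorem~\ref{prop_phi} gives that $\Phi$ is strictly increasing on $A:=[\beta,w_1)$ and on $B:=(w_1,\alpha]$, with one-sided limits $\Phi(w_1^-)=\alpha$, $\Phi(w_1^+)=\beta$; assumption \textbf{(C4')} gives $\beta\le\Phi(\beta)<\Phi(\alpha)$, and the extra hypothesis $\Phi(\alpha)<w_1$ forces $\Phi(B)=(\beta,\Phi(\alpha)]\subset A$ and $\Phi(A)=[\Phi(\beta),\alpha)$, so the branch images overlap in $K:=[\Phi(\beta),\Phi(\alpha)]\subset A$. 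In particular the lift $\Psi$ of $\Phi\vert_\I$ is continuous and nondecreasing on the fundamental domain $[\beta,\alpha]$ and jumps down by $\Phi(\alpha)-\Phi(\beta)>0$ at the lattice $\{\alpha+k(\alpha-\beta)\}$, i.e.\ $\Psi$ is a lift of a monotonic mod one transformation in the sense of Hofbauer~\cite{hofbauer}, equivalently an old heavy map~\cite{misiu}; the whole machinery for these maps therefore applies to $\Phi$.

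Next I would extract the rotation data. Because $\Phi(B)\subset A$, the $\{A,B\}$-itinerary of any orbit avoids the word $BB$; a period-$q$ orbit with exactly one point in $B$ thus has itinerary $(BA^{q-1})^\infty$, and a direct computation with $\Psi$ (which adds $(\alpha-\beta)$ precisely at iterates lying in $B$) shows $\Psi^{q}$ advances it by exactly $(\alpha-\beta)$, so its rotation number is $1/q$. Applied to the two given orbits of periods $q_1\ne q_2$ this yields two distinct rotation numbers $1/q_1,1/q_2\in(0,1/2]$ realized by periodic orbits, whence the rotation interval $[a(\Psi),b(\Psi)]$ is nondegenerate. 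Proposition~\ref{Effective} already gives a periodic orbit of every period $q$ with $1/q$ interior to the rotation interval, hence infinitely many distinct periods; but to obtain a genuine shift one needs positive topological entropy. For old heavy maps / monotonic mod one transformations, a nondegenerate rotation interval is the obstruction to zero entropy (the circle-map analogue of Sharkovskii's theorem; see \cite{misiu}, \cite{hofbauer}), so $h_{\mathrm{top}}(\Phi)>0$; and positive entropy for a piecewise-monotone map produces, for some iterate $\Phi^{N}$, a \emph{horseshoe} --- two closed subintervals of $K$ each of whose $\Phi^{N}$-image covers their union. Here the incompatibility $q_1\ne q_2$ is exactly what prevents the first-return dynamics on $K$ from being conjugate to a rotation and hence what forces the two required ``loops'' to have incompatible lengths. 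On the maximal $\Phi^{N}$-invariant subset $\Lambda$ of the union of these two intervals, the natural itinerary map is a semiconjugacy onto the full $2$-shift, and it is a conjugacy because the strict monotonicity of the two branches of $\Phi$ separates distinct points of $\Lambda$ under iteration; thus $w\mapsto\Phi(w)$ restricted to $\Lambda$ is conjugate to a shift on a sequence space.

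The main obstacle is to run the classical symbolic dynamics for piecewise-monotone interval maps through the discontinuity at $w_1$ and the endpoint jumps of $\Psi$, since $\Phi$ is genuinely discontinuous rather than continuous. Concretely, the covering relations and the horseshoe intervals must be chosen inside $K=[\Phi(\beta),\Phi(\alpha)]$, away from $w_1$ in their interiors --- which is possible precisely because $\Phi(\alpha)<w_1$ keeps $\Phi(B)$ bounded away from $w_1$ from below while \textbf{(C4')} keeps the branch images overlapping above $\Phi(\beta)$ --- and the one-sided limits $\Phi(w_1^\pm)\in\{\alpha,\beta\}$ let one close up any estimate that touches $w_1$. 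A secondary subtlety is the lack of uniform expansion: the branches are merely monotone, not expanding, so to upgrade the semiconjugacy to an actual conjugacy one must exclude nondegenerate intervals with a common itinerary inside $\Lambda$; on $A$ this cannot occur because an orbit trapped forever in $A$ would converge monotonically to a fixed point of $\Phi$ in $[\beta,w_1]$, which the horseshoe intervals are chosen to avoid. With these points dealt with, the statement follows from the old-heavy-map theory of Misiurewicz~\cite{misiu} and Hofbauer~\cite{hofbauer} together with the rotation bookkeeping above.
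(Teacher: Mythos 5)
Your route is genuinely different from the paper's. The paper disposes of this corollary by observing that the proof of Keener's Theorem 2.4 \cite{keener} goes through verbatim: the two periodic orbits with periods $q_1\neq q_2$, each having exactly one point in $(w_1,\alpha]$, are used directly as a Markov skeleton --- their points cut $[\beta,\alpha]$ into subintervals whose $\Phi$-images cover one another along two loops of lengths $q_1$ and $q_2$, and the resulting covering relations yield the coding onto a sequence space; the only hypothesis of \cite{keener} not satisfied here is piecewise contraction, and that hypothesis is never used in that particular proof. No rotation-interval, entropy or horseshoe machinery is invoked. Your computation that each such orbit has rotation number $1/q_i$ (since $\Psi$ gains $\alpha-\beta$ exactly at the single visit to $(w_1,\alpha]$ per period) is correct and does show the rotation interval is nondegenerate, but the detour through ``nondegenerate rotation interval $\Rightarrow$ positive entropy $\Rightarrow$ horseshoe for some iterate'' rests on theorems stated in the literature for \emph{continuous} degree-one circle maps and continuous piecewise-monotone interval maps; neither \cite{misiu} nor \cite{hofbauer} is cited (nor quoted by this paper) in a form covering discontinuous old heavy maps, so as written this bridge is an appeal to results you would still have to prove or locate. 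It is also unnecessary: the hypotheses hand you the covering structure directly.

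The more serious gap is the final upgrade from semiconjugacy to conjugacy. From two intervals each of whose images under $\Phi^{N}$ cover their union you obtain a compact invariant set and a continuous \emph{surjection} onto the full $2$-shift; injectivity of the itinerary map requires that distinct points of $\Lambda$ eventually separate, and ``strict monotonicity of the two branches'' does not give this --- a monotone, non-expanding map can carry a nondegenerate interval whose points all share the same itinerary while visiting both branches infinitely often (the analogue of a wandering interval), and your exclusion argument only treats orbits eventually trapped in $[\beta,w_1)$, which is not the only obstruction. Note that the conclusion of the corollary, in Keener's sense, is precisely the existence of an invariant set on which $\Phi$ is equivalent to a shift via the itinerary coding built from the two periodic orbits, so the conjugacy upgrade you attempt is both unsupported and not needed; dropping it and replacing the entropy argument by the direct covering construction of \cite[Theorem 2.4]{keener} (which is the paper's proof) closes the argument.
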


To obtain the above proposition it suffices, for example, to look at the proof of \cite[Theorem 2.4]{keener} and notice that the piecewise contraction assumption made in \cite{keener} does not interfere in the proof of this particular theorem and therefore it extends to our class of discontinuous maps with unbounded derivative. 

\subsection{A general result for adaptation maps with one discontinuity in the invariant interval $[\beta,\alpha]$}\label{section:General}
In previous sections we have classified the dynamics of the adaptation map and the associated spiking patterns in terms of rotation numbers and rotation intervals. However, for particular values of the parameters, we lack an explicit analytical expression with which to characterize the corresponding rotation properties.   Below we address this problem under the assumptions \textbf{(C3)},  that $[\beta,\alpha]$ is an invariant interval, and \textbf{(C1)}, that the map has a unique discontinuity point within this interval, regardless of whether the map is in the overlapping- or non-overlapping case or neither of these (e.g. when the jumps at $\beta+k(\alpha-\beta)$ are positive but there is an overlap in values of $\Phi\vert_{\lbrack\beta,w_1)}$ and $\Phi\vert_{(w_1,\alpha\rbrack}$).

 \begin{theorem}\label{unique_fixed_point}  Assume that conditions \textbf{(C1)} and \textbf{(C3)} hold and that $\Phi$ has a fixed point in $\lbrack\beta,w_1)$. By $w_f$ denote the largest fixed point in $\lbrack\beta,w_1)$. Then
 \begin{enumerate}
   \item if $\max\{\Phi(w): \ w\in (w_1,\alpha\rbrack\} < w_f$, then the rotation number $\varrho(\Psi,w)=0$ is unique and the system displays no MMOs;
   \item if $\max\{\Phi(w): \ w\in (w_1,\alpha\rbrack\} \geq w_f > \beta$, then there are subintervals of $[\beta,\alpha]$ of points with rotation number $0$, corresponding to orbits with no MMOs. However, if simultaneously $\Phi(\alpha)\geq \Phi(\beta)$, then there exists $\tilde{q}\in \mathbb{N}^*$ such that for every $q\geq \tilde{q}$, $\Phi$ admits also a periodic point $\hat{w}\in (w_f,\alpha)$ of period $q$, displaying MMOs.
 \end{enumerate}
 \end{theorem}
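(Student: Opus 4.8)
The plan is to transfer the statement to the lift $\Psi$ of $\Phi|_{\mathcal{I}}$ of Definition~\ref{def:Lift}, using three facts from Theorem~\ref{prop_phi}: $\Phi$ is continuous and strictly increasing on $[\beta,w_1)$ (since $w_1<w^*$ throughout this section), $\Phi(w)\to\alpha$ as $w\to w_1^-$ while $\Phi(w)\to\beta$ as $w\to w_1^+$, and $\Phi(w)\in(\beta,\alpha)$ on $(w_1,\alpha)$ with $\Phi(\alpha)\ge\beta$ by \textbf{(C3)}. First I would record a turn-counting identity: writing $x_0=w\in[\beta,\alpha)$ and $x_{n+1}=\Psi(x_n)$, one has $x_n=\Phi^n(w)+c_n(\alpha-\beta)$ with $c_0=0$ and $c_{n+1}=c_n+\mathbf{1}_{\{\Phi^n(w)\in(w_1,\alpha)\}}$, so $\varrho(\Psi,w)=\lim_n c_n/n$ is the asymptotic frequency with which the $\Phi$-orbit visits $(w_1,\alpha)$; in particular $\varrho(\Psi,\cdot)\in[0,1]$ and $\varrho(\Psi,w)=0$ whenever the orbit is eventually confined to $[\beta,w_1)$. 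The second preparatory point is geometric: $[\beta,w_f]$ is forward invariant for $\Phi$ (monotonicity on $[\beta,w_1)$, $\Phi(\beta)\ge\beta$, $\Phi(w_f)=w_f$) and lies inside $[\beta,w_1)$, and there $\Phi$ is continuous and increasing, so orbits in it converge monotonically to a fixed point; moreover $w_f$ being the largest fixed point of $\Phi$ in $[\beta,w_1)$ together with $\Phi(w)\to\alpha>w_1$ as $w\to w_1^-$ forces $\Phi>\mathrm{Id}$ on $(w_f,w_1)$, so an orbit started there increases strictly and, having no fixed point to converge to, lands in $(w_1,\alpha]$ after finitely many steps.

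For part 1, $\max_{(w_1,\alpha]}\Phi<w_f$ gives $\Phi((w_1,\alpha])\subseteq[\beta,w_f)$. Combined with the geometric point, the $\Phi$-orbit of every $w\in[\beta,\alpha]$ (off the exceptional countable set of preimages of $w_1$) enters $[\beta,w_f]$ after at most one visit to $(w_1,\alpha)$ and stays there forever; hence $\Phi^n(w)\in(w_1,\alpha)$ for only finitely many $n$, $c_n$ stabilizes, and $\varrho(\Psi,w)=0$ for all $w$. Uniqueness is immediate, and since every orbit is ultimately trapped in the zero-small-oscillation region $[\beta,w_1)$, the system displays no persistent MMOs.

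For part 2 the nondegenerate interval $[\beta,w_f]$ (nonempty interior since $w_f>\beta$) is a subinterval of rotation number $0$ whose orbits have no small oscillations, as above. Under the extra assumption $\Phi(\alpha)\ge\Phi(\beta)$ the only discontinuities of $\Psi$, at $\alpha+k(\alpha-\beta)$, have jump $\Phi(\beta)-\Phi(\alpha)\le0$, so $\Psi$ is a lift of a degree-one circle map with non-positive jumps, the rotation-interval machinery of Misiurewicz~\cite{misiu} packaged in Proposition~\ref{Effective} applies, and it suffices to prove $a(\Psi)=0$ and $b(\Psi)>0$. The former holds since $\Psi(w_f)=w_f$ and $\varrho(\Psi,\cdot)\ge0$. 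For the latter I would show that the upper envelope $\Psi_r(x)=\sup_{z\le x}\Psi(z)$ satisfies $\Psi_r(x)>x$ for all $x\in[\beta,\alpha]$, forcing $b(\Psi)=\varrho(\Psi_r)>0$: with $M:=\max_{(w_1,\alpha]}\Phi\ge w_f>\beta$, one has $\Psi_r(x)\ge\Psi_r(w_1)=\alpha>x$ on $(w_1,\alpha)$ and $\Psi_r(\alpha)=M+(\alpha-\beta)>\alpha$; $\Psi_r(x)\ge\Phi(x)>x$ on $(w_f,w_1)$; and on $[\beta,w_f]$ the periodic down-shift of the branch $\Phi|_{(w_1,\alpha]}$ sitting just to the left of $\beta$ gives $\sup_{z\le\beta}\Psi(z)=M$, so $\Psi_r\equiv M\ge w_f$ there — this is precisely where $\Phi(\alpha)\ge\Phi(\beta)$ is used, ensuring $\Psi$ has no positive jump at $\beta$ and hence $\Psi_r$ picks up the value $M$ rather than remaining at $\Phi(\beta)$. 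Granting $b(\Psi)>0$, set $\tilde q:=\lceil 1/b(\Psi)\rceil+1$; for every $q\ge\tilde q$ one has $a(\Psi)=0<1/q<b(\Psi)$, so Proposition~\ref{Effective} yields a periodic orbit of $\Phi$ of period $q$ with rotation number $1/q$, and its unique point lying in $(w_1,\alpha)\subseteq(w_f,\alpha)$ serves as $\hat w$; since $1/q>0$, that orbit visits $(w_1,\alpha)$ and thus displays MMOs.

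The step I expect to be the main obstacle is the part-2 estimate $b(\Psi)>0$, and within it the bookkeeping of $\Psi_r$ near $\beta$: this is exactly where $\Phi(\alpha)\ge\Phi(\beta)$ is indispensable, since in the non-overlapping case $\Psi$ is monotone, $\Psi_r=\Psi$, $\Psi_r(w_f)=w_f$, and $b(\Psi)=0$, so no MMO periodic orbit exists. One should also flag the degenerate boundary value $M=w_f$: there $\Phi((w_1,\alpha])\subseteq[\beta,w_f]$, every orbit has rotation number $0$, and no nontrivial periodic orbit exists, so the second conclusion of part 2 should be understood with $M>w_f$, under which all the displayed inequalities are strict and the argument closes. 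The remaining work is routine monotonicity analysis together with the already-quoted results on circle maps with monotone discontinuities.
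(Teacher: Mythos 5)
Your proof is correct and follows essentially the same route as the paper's: part 1 by trapping every orbit in the forward-invariant interval $[\beta,w_f]$, where $\Phi$ is continuous and increasing, and part 2 by showing the rotation interval has the form $[0,\delta]$ via the enveloping maps (the fixed point $w_f$ forcing $a(\Psi)=0$, and $\Psi_r>\mathrm{Id}$ forcing $b(\Psi)>0$) and then invoking the Misiurewicz-based Proposition~\ref{Effective} to obtain periodic orbits of rotation number $1/q$ for all large $q$. Your caveat about the boundary value $\max\{\Phi(w):w\in(w_1,\alpha]\}=w_f$ is well taken: the paper's own proof asserts that $\Psi_r$ has no fixed points, which actually requires the strict inequality, so your explicit treatment of that degenerate case makes the argument more careful than the published one.
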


\begin{proof} The first result follows from the fact that every point $w\in[\beta,\alpha]\setminus\{w_1\}$ is mapped into $[\beta,w_f]$ after at most a few iterates, and, since $\Phi([\beta,w_f])\subset [\beta,w_f]$ and $\Phi$ is increasing therein, it is eventually attracted to one of the fixed points located in $[\beta,w_f]$. 

For the second result, the same argument applies to show the existence of subintervals of $[\beta,\alpha]$ with rotation number $0$ under the assumptions made. To establish that there is $\tilde{q}\in \mathbb{N}^*$ such that periodic points of every period greater than $\tilde{q}$ exist under the additional assumption that $\Phi(\alpha)\geq \Phi(\beta)$, it suffices to show that the rotation interval is of the form $[0,\delta]$ for some $\delta>0$. Since $w_f$ is a fixed point for the lower enveloping map $\Psi_l$, we clearly have $a(\Psi)=\varrho(\Psi_l)=0$. In contrast, the upper enveloppe $\Psi_r$ has no fixed points, and using a similar argument as in the proof of Proposition \ref{male2}, we show that $b(\Psi)=\varrho(\Psi_r)\geq \frac{\delta'}{\alpha-\beta}>0$ for some $\delta'>0$, which completes the proof.  
\end{proof}

\subsection{Evolution of the rotation number along a segment of $(d,\gamma)$ values}

In the previous subsections, we have investigated the rotation number or the rotation interval in various subcases existing under general assumption \textbf{(C1)}, i.e. the adaptation map features a unique discontinuity point in the interval $[\beta,\alpha]$. We illustrate numerically the dependence of the rotation number (thus also the MMO pattern fired) and its possible uniqueness on the values of parameters $d$ and $\gamma$.

 \begin{figure}[htbp]
 \centering
\includegraphics[width=\textwidth]{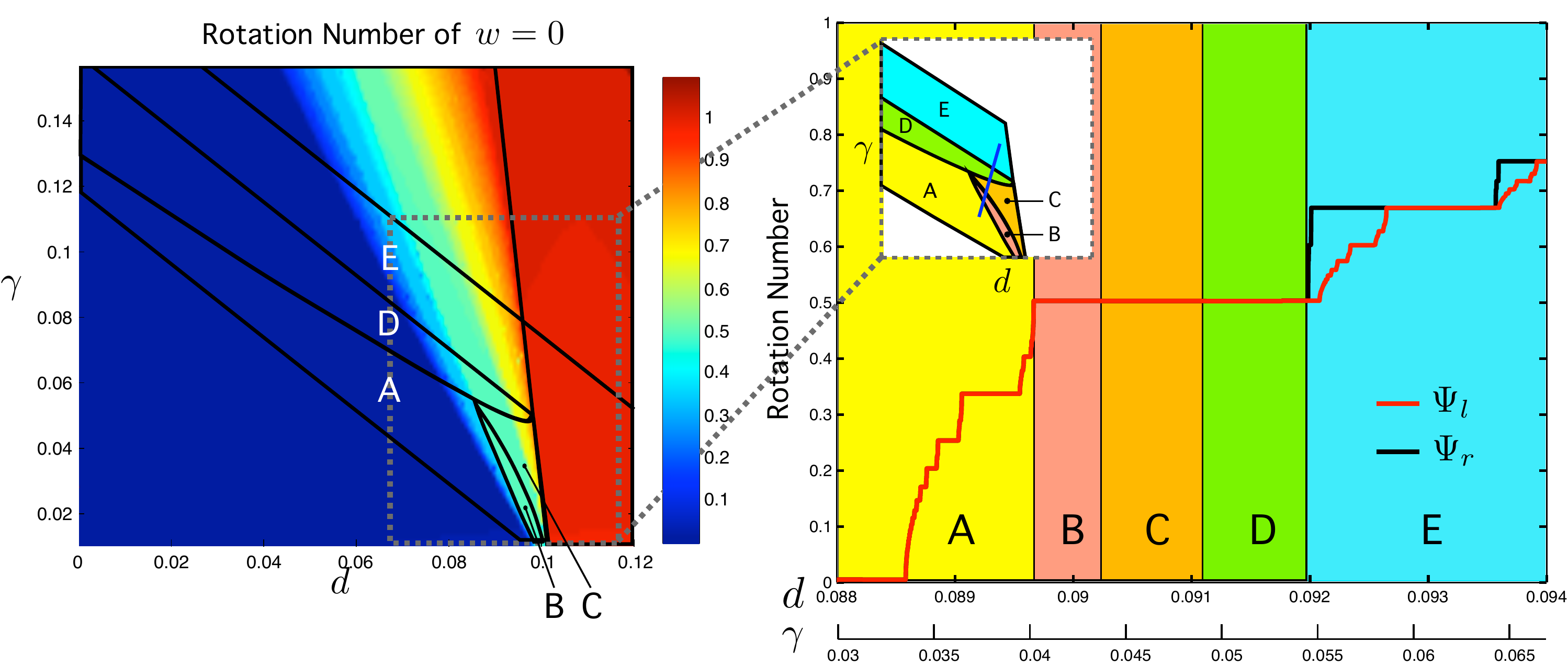}
\caption{Rotation numbers according to $(d,\gamma)$. Left panel : rotation number of the point $w=0$ together with the boundaries of the regions A to E corresponding to the different subcases when $w_1$ is the unique discontinuity of the adaptation map lying in the interval $[\beta,\alpha]$ (see text for more details). Right panel:  rotation numbers of the left and right lifts $\Psi_l$ and $\Psi_r$ associated with $\Phi$ for $(d,\gamma)$ varying along the  blue segment drawn in the inset. }
\label{RotNum_Path-d-Gamma}
\end{figure}

The left panel of Fig. \ref{RotNum_Path-d-Gamma} shows the rotation number of the adaptation map for a fixed initial condition and for $(d,\gamma)$ in $[0,0.12] \times [0.01,0.15]$. The various regions in the $(d,\gamma)$-plane corresponding to the different subcases studied above and already shown in Fig. \ref{Partition_d_gamma} are superimposed on the colormap. Regions A, B and C comprise the non-overlapping case, i.e. assumption \textbf{(C4)} is fulfilled, and general Theorem \ref{dynamics2} applies for $(d,\gamma)$ values in these regions. In particular, the rotation number of $\Phi$ is unique, i.e. does not depend on the initial condition.
\begin{itemize}
\item In region A, $\Phi(\alpha)<\Phi(\beta)<w_1$. Along certain paths in this region, Theorem \ref{new_devil} applies and the rotation number varies as illustrated in Fig. \ref{fig_Devil_Staircase}.  
\item In region B, $\Phi(\alpha)<w_1<\Phi(\beta)$, hence Proposition \ref{for2per} applies and ensures the existence of a period-2 orbit of $\Phi$, with rotation number equal to 1/2.
\item In region C, $w_1<\Phi(\alpha)<\Phi(\beta)$. This region may feature  a variety of different dynamics including all types of behavior arising in the other regions.  In the example in the right panel of Fig. \ref{fig_Devil_Staircase}, the unique rotation number is 1/2, but this value depends on the choice of $(d,\gamma)$, as can be seen in the left panel.
\end{itemize}
Regions D and E comprise the overlapping case and $\Phi$ may admit different rotation numbers depending on the initial condition. For $(d,\gamma)$ in these regions, the lift $\Psi$ associated with the adaptation map exhibits only negative jumps. The general Proposition \ref{Effective} applies, which ensures the existence of a rotation interval. Using the left and right lifts $\Psi_l$ and $\Psi_r$ associated with $\Phi$, one computes the endpoints  of the rotation interval and their evolution according to parameter $d$ (Proposition \ref{prop:overlap_devil} and Fig. \ref{Dev_St_Psir_Psil_Phi}). 
\begin{itemize}
\item In region D, $\alpha<w^*$ and $\Phi$ is piecewise increasing.  The rotation number is not uniquely defined in the general case. Nevertheless, along the particular chosen path in the parameter space $(d,\gamma)$ shown in the right panel of Fig. \ref{fig_Devil_Staircase}, $\Psi_l$ and $\Psi_r$ present the same rotation number $1/2$ and the rotation number of $\Phi$ does not depend on the initial condition. This particular simulation illustrates a way to demonstrate that the rotation number of the adaptation map is unique by showing that the rotation interval is reduced to a singleton.
\item In region E, $w^*<\alpha$.  The rotation numbers of $\Psi_l$ and $\Psi_r$ differ and the rotation interval of the adaptation map  varies with changes in  $(d,\gamma)$ within the region bounded by the black and red lines in the right panel of Fig. \ref{fig_Devil_Staircase}.
\end{itemize}

Note that the global Theorem \ref{unique_fixed_point} applies in all regions A to E. One may track the appearance and disappearance of the fixed points according to the values of $d$ and $\gamma$ together with the evolution of the rotation number or rotation interval. Outside of regions A to E, the structure of the lift is more complex due to the presence of additional discontinuity points. Yet, the numerical calculation of the rotation number can be performed for a given initial condition.

\section{A note on the case of two or more discontinuities} \label{sec:MoreDisc}
  One challenge in this study is related to the fact that the map under scrutiny, the adaptation map, is not known analy\-ti\-cal\-ly. Our mathematical analysis has covered in detail the cases of overlapping and non-overlapping maps with one discontinuity in the invariant interval. These situations do not cover all possible shapes of adaptation maps that can induce lifts with more discontinuity points; indeed, multiple discontinuities can yield a combinatorial explosion of cases with different combinations of possible jumps as well as maps that are non-monotone but with only positive jumps. While in these cases it is still possible to obtain  upper and lower bounds for the rotation set by computing the rotation numbers of the non-decreasing maps $\Psi_l$ and $\Psi_r$, defined in the same way  as in the overlapping case \textbf{(C4')}, it remains an open question to determine when every value within this interval corresponds to the rotation number of a given orbit, and it is not hard to find elementary examples for which this is false\footnote{We thank Micha\l{} Misiurewicz for interesting discussion on this topic.}. Thus the general, complete and precise characterization of the dynamics of the system is a complex and rich mathematical problem that raises several deep questions of iterates of interval maps with discontinuities.  In particular, we have seen that in the non-overlapping case \textbf{(C4)}, the rotation number allowed us to completely decode the MMO signature. A natural  extension of this work is thus to define for maps with more discontinuities a mathematical invariant (perhaps some vector of numbers) that would either provide the exact signature of each supported MMO pattern or allow calculation of how many points from a random orbit would be expected to fall into each continuity interval and hence how frequently a given number of small oscillations occurs between two consecutive spikes.

Let us conclude with the following exemplary result, which allows for multiple and even infinitely many intersections $w_i$ of the reset line $\{v=v_R\}$ with $\mathcal{W}^s$, assuming that only finitely many of them lie in the interval $(\beta,\alpha)$:
\begin{theorem}\label{GeneralOldHeavy} Suppose that $\Phi(\beta)>\beta$ and that there are finitely many discontinuity points of the map $\Phi$ in $(\beta,\alpha)$, all located in $(\beta,w^*)$.  Then the adaptation map $\Phi$ induces the rotation interval with the same properties as in Corollary \ref{Effective}. In particular, to every rational rotation number in the interior of this interval, there corresponds a periodic orbit, displaying regular MMOs.
\end{theorem}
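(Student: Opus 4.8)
The plan is to exhibit $\Phi$ restricted to $\mathcal{I}=[\beta,\alpha]$ as a degree-one circle map whose canonical lift $\Psi$ is an \emph{old heavy map} in the sense of~\cite{misiu} — a lift whose only discontinuities are non-positive (downward) jumps — and then to reuse, essentially verbatim, the argument behind Proposition~\ref{Effective}. As in the running framework, $[\beta,\alpha]$ is invariant and globally attracting: $\Phi<\alpha$ on its domain (Theorem~\ref{prop_phi}, item~8, recall $v_R<v_+$), $\Phi(\beta)>\beta$ by hypothesis, $\Phi(\alpha)\geq\beta$, and $\Phi(w)>w$ for $w$ small (item~7), so every orbit lands in $[\beta,\alpha]$ after finitely many iterates and stays there; in particular the (possibly infinitely many) discontinuity points of $\Phi$ outside $(\beta,\alpha)$ are irrelevant to the asymptotic dynamics.

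I would then construct the lift. List the discontinuity points of $\Phi$ inside $(\beta,\alpha)$ as $\beta<w_{i_1}<\cdots<w_{i_m}<w^{*}$, where $m\geq 2$ (the cases $m\leq 1$ reduce to Section~\ref{sec:OneDisc}, as there the old heavy structure requires the extra condition \textbf{(C4')}). Because they all lie below $w^{*}$, each has index $i_k\leq p_1$, so Theorem~\ref{prop_phi}, item~4, gives $\Phi(w_{i_k}^{-})=\alpha$ and $\Phi(w_{i_k}^{+})=\beta$ for every $k$, and item~3 makes $\Phi$ increasing on each of the continuity intervals $J_0=(\beta,w_{i_1})$, $J_k=(w_{i_k},w_{i_{k+1}})$ for $1\leq k\leq m-1$, and continuous (at worst unimodal) on $J_m=(w_{i_m},\alpha)$. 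Set $\Psi:=\Phi+k(\alpha-\beta)$ on $J_k$ for $k=0,\dots,m$ (the value assigned at each $w_{i_k}$ is immaterial, as noted after Definition~\ref{def:Lift}), and extend by $\Psi(x+(\alpha-\beta))=\Psi(x)+(\alpha-\beta)$. Exactly the limit-matching used in Definition~\ref{def:Lift} — $\Phi(w_{i_k}^{-})=\alpha=\beta+(\alpha-\beta)=\Phi(w_{i_k}^{+})+(\alpha-\beta)$ — shows that $\Psi$ is continuous across each interior $w_{i_k}$; hence $\Psi$ is continuous on $(\beta,\alpha)$, and its only discontinuities sit at the seam points $x_j=\alpha+j(\alpha-\beta)$, $j\in\Z$.

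The crux is the sign of the seam jump: $\Psi(\alpha^{+})-\Psi(\alpha^{-})=\big(\Phi(\beta)+(\alpha-\beta)\big)-\big(\Phi(\alpha)+m(\alpha-\beta)\big)=\big(\Phi(\beta)-\Phi(\alpha)\big)+(1-m)(\alpha-\beta)\leq(2-m)(\alpha-\beta)\leq 0$, where I use $\Phi(\beta)\leq\alpha$, $\Phi(\alpha)\geq\beta$, and — essentially — $m\geq 2$. Since $\Phi$ takes values in $[\beta,\alpha)$, reduction modulo $\alpha-\beta$ is well defined, so $\Psi$ is the lift of a degree-one circle map of the circle of length $\alpha-\beta$ with only downward jumps: an old heavy map. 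Introducing the enveloping maps $\Psi_l,\Psi_r$ of \eqref{a_new}--\eqref{b_new}, which are continuous orientation-preserving lifts (continuity being a consequence of the jumps of $\Psi$ being downward) with $\Psi_l\leq\Psi\leq\Psi_r$, one checks that the proof of Proposition~\ref{Effective} (after~\cite{misiu,frrhodes}) uses nothing about $\Phi$ beyond this structure — in particular no bound on $\Phi'$, which is what allows it to apply here notwithstanding the divergence $\Phi'(w_{i_k}^{\pm})=+\infty$ (Theorem~\ref{prop_phi}, item~5). Hence the rotation interval $[a(\Psi),b(\Psi)]=[\varrho(\Psi_l),\varrho(\Psi_r)]$ has all the stated properties: every value in it is a pointwise rotation number, and every rational $p/q$ strictly inside is realized by a genuine $q$-periodic orbit of $\Phi$.

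Finally, to read off the MMO signature, for a $q$-periodic point $\bar{w}$ with $\Psi^{q}(\bar{w})=\bar{w}+p(\alpha-\beta)$ I would write $\bar{w}_l=\Phi^{l}(\bar{w})\in J_{k_l}$ and telescope $\sum_{l}(\bar{w}_{l+1}-\bar{w}_l)=0$ to obtain $\sum_{l=0}^{q-1}k_l=p$. Thus when $p\neq 0$ the orbit visits some $J_k$ with $k\geq 1$, where by \eqref{nosmall} the number of small oscillations is positive, so the trajectory is not tonic spiking and realizes a periodic — therefore regular — MMO signature, which is an MMBO precisely when the orbit also visits $J_0$ and $i_1=1$ (consecutive spikes with no small oscillation in between); the case $p=0$ is a fixed point in $[\beta,w_{i_1})$, exactly as in Proposition~\ref{Effective}. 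The main obstacle is the construction and seam-jump estimate above: one has to notice that the hypothesis ``all discontinuities below $w^{*}$'' is exactly what forces every interior discontinuity to be a drop of size $\alpha-\beta$ — hence absorbed into the winding count and invisible in $\Psi$ — and that the one surviving jump is non-positive \emph{only because} $m\geq 2$, which is precisely why this regime lies outside the reach of the one-discontinuity analysis of Section~\ref{sec:OneDisc}. Everything downstream of the old heavy reduction is an appeal to~\cite{misiu} via Proposition~\ref{Effective}.
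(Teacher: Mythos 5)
Your proposal is correct and follows essentially the same route as the paper: the paper's proof consists precisely of observing that the suitably defined lift $\Psi$ is an old heavy map in the sense of~\cite{misiu} and then invoking the same machinery as in Proposition~\ref{Effective}. You merely spell out the verification the paper calls straightforward — the construction of the lift, the cancellation of the interior jumps (each interior discontinuity below $w^*$ drops by exactly $\alpha-\beta$ and is absorbed into the winding count), the non-positivity of the seam jump when at least two discontinuities lie in $(\beta,w^*)$ together with the standing invariance of $[\beta,\alpha]$, and the extraction of the MMO signature from the winding count along a periodic orbit.
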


The above theorem is straightforward once it is noted that the suitably defined lift $\Psi$ for the adaption map under the given assumptions is an old heavy map, as the maps studied in \cite{misiu}. Therefore, in particular, one can also derive conditions e.g. for periodic orbits of all possible periods exhibiting MMOs (with richer structure than what we considered earlier due to the additional discontinuities), and the corresponding regions in the space of reset parameters for specific models can be computed numerically, in the same way as in the previous subsection. We emphasize that due to the properties of the adaptation map (in particular, the fact that $\Phi(w_i,w_{i+1})=(\beta,\alpha)$ for the consecutive discontinuity points $w_i, w_{i+1}<w^*$), the lift $\Psi$ of $\Phi$ is very likely to be an old heavy map. Typically, in case of multiple discontinuities one can expect the rotation interval to cover the whole interval $[0,1]$ and the occurrence of periodic orbits of all periods and rich MMO structure.

\section{Discussion} \label{sec:Discussion}
Nonlinear bidimensional hybrid neuron models, which combine continuous subthreshold dynamics with a spike-related jump or reset condition, are easily defined and show an astonishingly rich mathematical phenomenology. A number of studies have already revealed their subthreshold dynamical properties~\cite{touboul:08}, investigated their spike patterns in the absence of any equilibrium state of the subthreshold dynamics~\cite{touboul-brette:09}, and highlighted their versatility~\cite{brette-gerstner:05,izhikevich:04,shlizerman2012neural} and capacity to reproduce neuronal dynamics~\cite{izhikevich:07,izhikevich-edelman:08,naud-macille-etal:08,touboul-brette:08}. The present paper and its companion~\cite{paper1} add to this body of works by studying (i) chaotic dynamics and period-incrementing structures, and (ii) oscillating solutions associated with multiple unstable equilibria. The latter led us to investigate the dynamics of a particular class of interval maps that feature both discontinuities and divergence of the derivative. Interestingly, in the presence of an unstable focus of the subthreshold dynamics, we have shown that the spike patterns fired may correspond to complex oscillations that combine action potentials (or bursts of action potentials) and subthreshold oscillations, trajectories known as MMOs or MMBOs in continuous dynamical system. 

In contrast to continuous dynamical systems, these forms of  complex oscillations can occur in hybrid systems with only two variables. Moreover, the mechanism of generation of these trajectories differs between these two models; in the hybrid case, MMOs result entirely  from  the topology of the invariant manifolds of the continuous-time dynamics. As such, these trajectories can occur in systems that lack timescale separation and based on a mapping approach, discrete  dynamical systems methods can be used to rigorously establish their existence and properties. One may however wonder if there exists a relationship between the two systems, and particularly it is tempting to interpret the hybrid system as the reduction of a differentiable multiple timescale system in a certain singular limit. The wide variety of MMOs (in particular the wild signatures encountered) produced with the reset mechanism indicates that such a differentiable system should be at least four-dimensional and the vector field should induce a highly complex return mechanism within the region of the phase space where small oscillations are generated (funnel). The construction of such a return mechanism for reproducing the same versatility in the MMOs signature in the differentiable case remains a challenging problem from the dynamical viewpoint, involving complex interactions between the different timescales.

To tune the model parameters to attain the regime studied in this work, we introduced a parameter $\gamma$, which  yields an attenuation of the adaptation variable during the reset.  This adjustment to the reset mechanism  
accounts for the  durations of spikes fired  (see \cite{paper1}). With this new parameter, the quartic model (and, we expect, all other models of the class, including the Izhikevich model~\cite{izhikevich:04} and the adaptive exponential~\cite{brette-gerstner:05}) can be tuned to achieve any of the cases we have identified. Therefore, our analysis provides useful information for tuning model parameters to achieve outputs fulfilling a list of qualitative and quantitative specifications. In particular, the ability to reproduce fine trajectories of MMOs may be useful when modeling neurons in situations in which synchronization is essential. Indeed, in neuroscience, it has been shown that in the pre\-sen\-ce of noise, small subthreshold oscillations support  the generation of precise and robust rhythmic spike patterns, as recorded in specific rhythmic pattern generators such as the inferior olive nucleus \cite{bernardo-foster:86,llinas-yarom:81,llinas-yarom:86}, in the stellate cells of the entorhinal cortex \cite{alonso-klink:93,alonso-llinas:89,jones:94}, and in the dorsal root ganglia \cite{amir-michaelis-etal:99,liu-michaelis-etal:00,llinas:88}.
A possible direction for future work would be to  go deeper into the analysis of the shape of the adaptation map of the adaptive exponential integrate-and-fire system to relate the presence and possible signature of MMOs to variations in biophysical parameters, following e.g.~\cite{touboul-brette:08}.

Another important direction related to the roles of model parameters would be to characterize the structural stability of trajectories and their possible bifurcations.  First works in that direction have been developed in~\cite{foxall2012contraction}: taking into account the infinite contraction of the trajectories in the voltage variable associated with the reset, the authors proposed to compute expansion or contraction exponents along transverse directions, providing a notion of stability of hybrid orbits that is more explicit than criteria on the shape of the adaptation map. It would be interesting to develop these methods in the cases of non-monotonic spiraling trajectories associated with the presence of MMOs. Alternatively, using models with simpler subthreshold dynamics, for instance linear or piecewise linear~\cite{jimenez2013locally,rotstein2012canard}, may allow for a derivation of an explicit expression of the reset maps, thus for fine characterization of the stability of the orbits.

At the level of the adaptation map, a question that is open in the overlapping case is to characterize the stability of orbits when the system has multiple possible rotation numbers. Indeed, even if the rotation interval is not a singleton, one often observes in simulations that  only one rotation number is actually realized. There are two typical reasons why this could occur:  either there is an attracting periodic orbit that attracts most  initial conditions or the system has an invariant measure $\mu$, absolutely continuous with respect to the Lebesgue measure, in which case the observed rotation number is just the average displacement $\Psi(w)-w$ with respect to the measure $\mu$. Nonetheless, rigorously establishing the existence of such a measure is  a challenge in most of systems arising from applications. In particular, we cannot use e.g. the classical Lasota-Yorke theorem (\cite{LasotaYorke}), since the derivative $\Phi^{\prime}$ diverges at the discontinuity points. On the other hand, for investigating stability of orbits  a possible approach would be to use and develop symbolic dynamics and kneading theory for such discontinuous interval maps. However, we emphasize that
in our  characterization of the orbits and the patterns of complex oscillations fired, rotation theory turned out to be the most useful tool since we have a unequivocal,
bidirectional link between the rotation number and the signature of the MMO (Theorem \ref{dynamics2}), which allows us to characterize situations in which the neuron shows regular spiking, MMO, bursting, MMBO or chaotic behavior. 

In these studies, we have made a crucial use of the planar nature of the system. MMOs will of course exist in higher dimensional hybrid dynamical systems, and analysis would require fine characterization of the invariant manifolds. The extension of the theory to higher dimensional systems would be particularly interesting from the computational neuroscience viewpoint for understanding the behavior of neuron networks in which several neurons driven by such dynamics are coupled and communicate at the times of the spikes.

\vspace{0.05cm}

\noindent {\bf Acknowledgements:}
J. Rubin was partly supported by US National Science Foundation awards DMS 1312508 and 1612913.  J. Signerska-Rynkowska was partly supported by Polish National Science Centre grant 2014/15/B/ST1/01710. 

\bigskip

\appendix
\section{Proofs of Theorems \ref{new_devil} and \ref{prop:overlap_devil}}\label{sec:appendix}

\vspace{0.3cm}

\noindent{\it{Proof of Theorem \ref{new_devil}}} A general theorem for continuous orientation-preserving circle maps is shown in~\cite{katok}, and is extended to the case of non-continuous orientation-preserving maps in~\cite{brette} and in~\cite{frrhodes2}. This theory is valid under non-degeneracy conditions on the dependence of the maps on the parameters. In particular, a general result on the monotone family of increasing lifts $\Psi_s$ indexed by a parameter $s\in [\lambda_1,\lambda_2]$ (in our case, $s=d$ or $\gamma$) can be shown under the assumption that the map $s\mapsto \Psi_s$ is increasing and continuous with respect to the Hausdorff topology of $H$-convergence, which is equivalent to uniform convergence at the continuity points (see~\cite{frrhodes2}), i.e. under the condition
	\begin{equation}\label{eq:Hconv}
	\underset{\varepsilon>0}{\underset{\tilde{w}\neq \alpha_{s_0}+k(\alpha_{s_0}-\beta_{s_0})}{\underset{s_0\in \lbrack \lambda_1,\lambda_2\rbrack}{\forall}}} \quad
	\underset{\xi>0}{\underset{\delta>0}{\exists}}\quad  \underset{w\in\mathbb{R}}{\underset{s\in \lbrack \lambda_1,\lambda_2\rbrack}{\forall}} \quad  \vert s-s_0\vert<\xi \land \vert w-\tilde{w}\vert<\delta  \implies  \vert \Psi_s(w)-\Psi_{s_0}(\tilde{w})\vert <\varepsilon
	\end{equation}
	where $\alpha_{s_0}+k(\alpha_{s_0}-\beta_{s_0})$, $k\in\mathbb{Z}$, denotes the discontinuity point of the lift $\Psi_{s_0}$.
		
As the reset parameter $d$ is increased, the map $\Phi$ is rigidly increased by the same amount. This particularly simple dependence of the map on $d$ yields precise control of how the dynamical features of the map vary with $d$. In particular, we note that the boundaries of the invariant interval $\alpha_d$ and $\beta_d$ are also simply translated as $d$ varies, and in particular the length $\theta:=\alpha_d-\beta_d$ of the invariant interval is constant. Moreover, we also observe that for any $d\in [d_1,d_2]$, the maps $\Phi_d$ have the same discontinuity point $w_{1,d}$, and the lifts $\Psi_d$ are continuous at points $w_{1,d}+k(\alpha_d-\beta_d)$, have positive jumps at $\alpha_d+ k(\alpha_d-\beta_d)$ and satisfy
$\Psi_d(w+\theta)=\Psi_d(w)+\theta$. So in fact all these lifts $\Psi_d$ can be seen as lifts of non-continuous invertible circle maps under the same projection $\mathfrak{p}: t\mapsto \exp(\frac{2\pi\imath t}{\theta}).$

	However, even if the map $\Phi_d$ is increasing with $d$, this is not necessarily the case for $\Psi_d$, because of the simultaneous fluctuation of the invariant interval. Indeed, when each lift $\Psi_d$ is obtained from $\Phi_{d}\big\vert_{\lbrack\beta_d,\alpha_d\rbrack}$ the relation $\Psi_{d_1}(w)<\Psi_{d_2}(w)$ for $d_1<d_2$ might be violated in the intervals $\lbrack \beta_{d_1},\beta_{d_2}\rbrack$, as at the point $\alpha_{d_2}$ we glue the right part of the graph of $\Phi_{d_2}\vert_{[\beta_{d_2},\alpha_{d_2}]}$ to its left part (shifted up by $\theta$). But noticing that under the additional condition $\Phi_d(\alpha_{d_2})<\Phi_d(\beta_{d_1})$ for any $d\in[d_1,d_2]$,  the interval $[\beta_{d_1},\alpha_{d_2}]$ constitutes a particular invariant interval in which the adaptation map $\Phi_d$ is piecewise increasing and non-overlapping, we can build well-behaved lifts $\tilde{\Psi}_d:\mathbb{R}\to\mathbb{R}$ based on the shape of the map $\Phi_d$ on this bigger invariant interval $[\beta_{d_1},\alpha_{d_2}]$. In contrast to $\Psi_d$, these new lifts are discontinuous at the points $w_{1,d}+k(\alpha_{d_2}-\beta_{d_1})$ (where they have positive jumps of amplitude $d_2-d_1$), in addition to their discontinuity at $\alpha_{d_2}+k(\alpha_{d_2}-\beta_{d_1})$, $k\in \mathbb{Z}$. The latter jump also remains positive under our assumption that $\Phi_d(\beta_{d_1})$ is strictly greater than $\Phi_d(\alpha_{d_2})$. Constructing lifts $\Psi_d$ on an enlarged invariant interval $[\beta_{d_1},\alpha_{d_2}]$ instead of $[\beta_d,\alpha_d]$ has the advantage of ensuring that the mapping $(w,d)\mapsto \tilde{\Psi}_d(w)$ is increasing in both variables.  Moreover, it has no effect on the dynamics, since any orbit of $\Phi_d$ with an initial condition in $[\beta_{d_1},\alpha_{d_2}]$ enters after a few iterations into the interval $[\beta_d,\alpha_d]$.  Since the orbits $\{\tilde{\Psi}^n_d(w)\}$ project $\mod (\alpha_{d_2}-\beta_{d_1})$ to the orbits $\{\Phi^n_d(w)\}$, we therefore have $\varrho(\tilde{\Psi}_d)=\varrho(\Psi_d)$.
	
Concluding the proof therefore only amounts to showing that the map $d\mapsto \tilde{\Psi}_d$ is continuous in the Hausdorff topology, which is very simple once it is noted, as mentioned above, that this property is equivalent to the uniform convergence at all points in the interior of $[\beta_{d_1},\alpha_{d_2}]\setminus \{w_{1,d}\}$ and that $\tilde{\Psi}_d-\tilde{\Psi}_{d'}=d-d'$ on this interval. Thus the mapping $\tilde{\rho}: d\mapsto \varrho(\tilde{\Psi}_d)$ has the properties listed in the theorem (compare with Theorem 2 in \cite{brette}) and consequently, the same holds for $\rho: d\mapsto \varrho(\Psi_d)$.  \hbx

We have noticed that while continuity of the lifts under the Hausdorff topology was always satisfied in our case, an additional assumption is necessary to ensure that the mapping $(s,w)\mapsto \Psi_s(w)$ (where $s$ denotes a parameter, here $d$ or $\gamma$) is increasing in both variables, which otherwise is not always true. We emphasize that even in situations in which this mapping is not increasing in both variables, the rotation number remains continuous under the $H$-convergence provided that the limit function $\Psi_{s_0}$ is strictly increasing, see \cite[Proposition 5.7]{frrhodes2}.

The plateaus of rotation number observed in the devil's staircase situation are a general property of our system, called \emph{locking} (see~\cite{frrhodes2} for precise definition of locking). 

\begin{remark}
	We observe that no condition beyond monotonicity of the lifts in $w$ and $d$ is required to show locking of the rational rotation number in the strictly non-overlapping case (i.e. $\Phi(\alpha)<\Phi(\beta)$), unlike the case of continuous circle maps. When $\Phi(\alpha)=\Phi(\beta)$, the lift $\Psi$ would be in fact a lift of an orientation preserving circle homeomorphism and thus locking of the rotation number at rational values requires that there is no conjugacy with rational rotation for such a map (see e.g. Propositions 11.1.10 and 11.1.11 in \cite{katok}).
\end{remark}

\vspace{0.3cm}
\noindent{\it{Proof of Theorem \ref{prop:overlap_devil}}}
The first part of the proof amounts to showing that the upper and lower envelopes of $\Psi_{d}$, denoted $\Psi_{d,l}$ and $\Psi_{d,r}$, are uniformly continuous in $d$ for $d\in \lbrack \lambda_1,\lambda_2\rbrack$. 

This regularity readily stems from the fact that $\Phi_d$ and $\Phi_{d_0}$ are simply shifted by the amount $d-d_0$. But as in the proof of Theorem~\ref{new_devil}, one needs to be careful about the variation of the invariant intervals $[\beta_d,\alpha_d]$ since these also have an additive relationship in $d$ (i.e. $\beta_d-\beta_{d_0}=d-d_0$ and similarly for $\alpha_d$). Thus close to the discontinuity, we do not have an additive relationship in $\Psi_d$ in general, but for the maps $\Psi_{d,l}$ and $\Psi_{d,r}$, we can prove even uniform continuity in $d\in \lbrack \lambda_1,\lambda_2\rbrack$:

\begin{equation}\label{cont_lower}
\forall \varepsilon>0, \exists \xi>0, \forall (d_1,d_2)\in \lbrack \lambda_1,\lambda_2\rbrack ^2,
 \vert d_1 - d_2\vert <\xi \ \implies\  \Vert \Psi_{d_1,l} -  \Psi_{d_2,l} \Vert_{\infty}<\varepsilon.
\end{equation}

We now fix $\varepsilon,\xi>0$ and $(d_1,d_2) \in [\lambda_1,\lambda_2]^2$ with $d_1-d_2<\xi$, and analyze the maps $\Psi_{d_1,l}$ and $\Psi_{d_2,l}$ in the interval $\lbrack \beta_{d_2},\alpha_{d_2}\rbrack$ without loss of generality, since the fact that  $\Psi_d(w+\theta)=\Psi_d(w)+\theta$ allows restricting the analysis to an arbitrary interval of length $\theta:=\alpha_d-\beta_d$.

We clearly have, for any $w\in \lbrack \beta_{d_1},\alpha_{d_2}\rbrack$: 
\[
\Psi^{d_1}_l(w)-\Psi^{d_2}_l(w)=d_1-d_2<\xi.
\]
For $w\in\lbrack \beta_{d_2},\beta_{d_1}\rbrack$, we find
\begin{eqnarray*}
	\Psi_{d_1,l}(w)&=&\min\{\Phi_{d_1}(w+\theta), \Phi_{d_1}(\beta_{d_1})\}, \\
	\Psi_{d_2,l}(w)&=&\Phi_{d_2}(w)\leq \Phi_{d_2}(\beta_{d_1})=\Phi_{d_1}(\beta_{d_1})-(d_1-d_2)<\Phi_{d_1}(\beta_{d_1}).
\end{eqnarray*}
We now distinguish between two cases depending on whether $\Psi_{d_1,l}(w)\geq \Psi_{d_2,l}(w)$ or not. When this inequality is true, we find
\begin{eqnarray*}
\vert \Psi_{d_1,l}(w)- \Psi_{d_2,l}(w)\vert &=& \Psi_{d_1,l}(w)- \Psi_{d_2,l}(w) \\
	&\leq& \Psi_{d_1}(\beta_{d_1})-\Psi_{d_2}(\beta_{d_2}) \\
	&\leq& \Psi_{d_2}(\beta_{d_1})-\Psi_{d_2}(\beta_{d_2}) + d_1-d_2\leq (1+\mathcal{C})\xi,
\end{eqnarray*}
where $\mathcal{C}:=\max\{(\Phi_{d})^{\prime}(w): \ w\in \lbrack \beta_{\lambda_1},\beta_{\lambda_2} \rbrack\}$ is actually a constant independent of $d$.
If, on the contrary, $\Psi_{d_1,l}(w)< \Psi_{d_2,l}(w)$, then we have
\[
\Psi_{d_2,l}(w)\leq \Phi_{d_2}(\beta_{d_1})=\Phi_{d_1}(\beta_{d_1})-(d_1-d_2)< \Phi_{d_1}(\alpha_{d_1})-(d_1-d_2)
\]
using the overlapping condition. Similarly, $\Psi_{d_1,l}(w)\geq \Psi_{d_1,l}(\beta_{d_2})=\Phi_{d_1}(\alpha_{d_2})$. Equipped with these estimates, we can compute  that $\vert \Psi_{d_1,l}(w)- \Psi_{d_2,l}(w)\vert  \leq (1+\tilde{\mathcal{C}})\xi$, where $\tilde{\mathcal{C}}:=\max\{(\Phi_d)^{\prime}(w): \ w\in \lbrack \alpha_{\lambda_1}, \alpha_{\lambda_2}\rbrack\}$ is independent of $d$, which proves  \eqref{cont_lower} for $\Psi_{d,l}$. Similar methods  will work for proving the property for upper-enveloping maps $\Psi_{d,r}$ concluding the proof of continuity of the mappings $d\mapsto a(\Psi_d)$ and $d\mapsto b(\Psi_d)$. 

Note that, in contrast to the proof of Theorem~\ref{new_devil}, we did not consider here the maps $\Phi_{d_1}$ and $\Phi_{d_2}$ on a common bigger invariant interval, e.g. $\lbrack \beta_{d_2},\alpha_{d_1}\rbrack$ for $d_1>d_2$, because such lifts would have positive jumps at $w_1$ and, consequently, would no longer correspond to heavy maps.

To prove the second statement, we consider again $(d_1,d_2) \in [\lambda_1,\lambda_2]^2$ such that $d_1>d_2$. For $d \in [d_2,d_1]$, we build the maps $\Psi_d$, $\Psi_{d,r}$ and $\Psi_{d,l}$ on the interval $[\beta_{d_2},\alpha_{d_2}]$. Note that $\Psi_{d_1}(w)-\Psi_{d_2}(w)=d_1-d_2>0$ for $w\in [\beta_{d_1},\alpha_{d_2}]\subset [\beta_{d_2},\alpha_{d_2}]$. The relation $\Psi_{d_1}(w)-\Psi_{d_2}(w)>0$  can only be violated in $[\beta_{d_2},\beta_{d_1}]$. However, $\Psi_{d_2}(w)\leq \Psi_{d_2}(\beta_{d_1})$ for $w\in [\beta_{d_2},\beta_{d_1}]$ since $\Psi_{d_2}$ is monotone increasing on this interval. On the other hand, depending on whether $w^*(d_1)\in [\beta_{d_2}+\theta,\alpha_{d_1}]$ or not, $\Psi_{d_1}$ in $[\beta_{d_2},\beta_{d_1}]$ is either monotone (non-decreasing or non-increasing) or has exactly one local extremum, namely $w^*(d_1)$. This yields
\[
\Psi_{d_1}(w)\geq \min\{\Psi_{d_1}(\beta_{d_2}),\Psi_{d_1}(\beta_{d_1}^-)\}
\]
for every $w\in [\beta_{d_2},\beta_{d_1}]$. Additionally, since $\Psi_{d_1}$ fulfills the overlapping condition,
\[
\Psi_{d_1}(\beta_{d_1}^-)> \Psi_{d_1}(\beta_{d_1}^+)=\Psi_{d_2}(\beta_{d_1})+d_1-d_2> \Psi_{d_2}(\beta_{d_1})
\]
and $\Psi_{d_1}(\beta_{d_1}^-)> \Psi_{d_2}(w)$ for every $w\in [\beta_{d_2},\beta_{d_1}]$.  Using an analogous argument for  $\Psi_{d_2}$, we obtain
\[
\Psi_{d_1}(\beta_{d_2})=\Psi_{d_2}(\beta_{d_2}^-)+d_1-d_2> \Psi_{d_2}(\beta_{d_2}^+)+d_1-d_2\geq \Psi_{d_2}(\beta_{d_1})
\]
due to (\ref{assump_overlap}). Thus $\Psi_{d_1}(\beta_{d_2})>\Psi_{d_2}(w)$ for every $w\in [\beta_{d_2},\beta_{d_1}]$. It follows that $\Psi_{d_1}(w)>\Psi_{d_2}(w)$ also in $[\beta_{d_2},\beta_{d_1}]$ and the mapping $d\mapsto \Psi_d$ is increasing. Now, by the definition of the enveloping maps $\Psi_{d,l}$ and $\Psi_{d,r}$, the fact that $\Psi_{d_2}<\Psi_{d_1}$ on $\mathbb{R}$ for $d_2<d_1$ implies that $\Psi_{d_2,r}<\Psi_{d_1,r}$ and $\Psi_{d_2,l}<\Psi_{d_1,l}$ on $\mathbb{R}$. Thus the maps $d\mapsto \Psi_{d,r}$ and $d\mapsto \Psi_{d,l}$ are increasing and  the statement about the devil's staircase follows. \hbx

\begin{remark}
To ensure that the mapping $t\mapsto \varrho(F_t)$ behaves as a devil's staircase for a continuous increasing family $\{F_t\}_{t\in [T_1,T_2]}$ of continuous  non-decreasing degree-one maps $F_t$, we also need to make sure that there exists a dense set $S\subset \mathbb{Q}$ such that, for $s\in S$, no map $F_{t}$ is conjugated to the rotation $\mathcal{R}_s$ by $s$ and that the map $t\mapsto \varrho(F_t)$ is not constant (see Proposition 11.1.11 in \cite{katok}). However, in practice, these two specific cases do not occur for any of the envelopes $\Psi_l$ and $\Psi_r$ of the adaptation map. \end{remark}

\end{document}